\numberwithin{equation}{section}
\newtheorem{theorem}{Theorem}[section]
\newtheorem{lemma}{Lemma}[section]
\newtheorem{corollary}{Corollary}[section]
\theoremstyle{definition}
\theoremstyle{remark}
\newtheorem{remark}{Remark}[section]
\newtheorem{notation}{Notation}[section]
\newcommand{\df}{\;{:=}\;}
\DeclareMathOperator*{\trace}{tr}
\DeclareMathOperator*{\interior}{int}
\newcommand{\LA}{\mathcal{L}}
\newcommand{\PA}{\mathcal{P}}
\newcommand{\cG}{\mathcal{G}}
\newcommand{\cM}{\mathcal{M}}
\newcommand{\cV}{\mathscr{V}}
\newcommand{\cH}{\mathcal{H}}
\newcommand{\cK}{\mathcal{K}}
\newcommand{\cX}{{\mathscr{X}}}
\newcommand{\cO}{{\mathscr{O}}}
\newcommand{\RR}{\mathbb{R}}
\newcommand{\NN}{\mathbb{N}}
\newcommand{\ua}{{\underline\alpha}}
\newcommand{\oa}{{\overline\alpha}}
\newcommand{\abs}[1]{\lvert#1\rvert}
\newcommand{\norm}[1]{\lVert#1\rVert}
\newcommand{\bnorm}[1]{\bigl\lVert#1\bigr\rVert}
\newcommand{\ucZ}{\underline{\mathcal{Z}}}
\newcommand{\ocZ}{\overline{\mathcal{Z}}}
\begin{document}

\title[RISK-SENSITIVE CONTROL AND A COLLATZ--WIELANDT FORMULA]
{RISK-SENSITIVE CONTROL AND AN ABSTRACT\\[3pt]
COLLATZ--WIELANDT FORMULA}

\author{Ari\ Arapostathis}
\address{Department of Electrical and Computer Engineering,
The University of Texas at Austin,
1 University Station, Austin, TX 78712}
\email{ari@ece.utexas.edu}
\thanks{Supported in part by the Office of Naval Research
through the Electric Ship Research and Development Consortium.}

\author{Vivek S.\ Borkar}
\address{Department of Electrical Engineering, Indian Institute of Technology,
Powai, Mumbai 400076, India}
\email{borkar.vs@gmail.com}
\thanks{Supported in part by grant 11IRCCSG014 from IIT Bombay and a
J.\ C.\ Bose Fellowship
from the Department of Science and Technology, Government of India.}

\author{K.\ Suresh Kumar}
\address{Department of Mathematics, Indian Institute of Technology,
Powai, Mumbai 400076, India}
\email{suresh@math.iitb.ac.in}
\thanks{Supported in part by DST project: SR/S4/MS:751/12}

\begin{abstract}
The `value' of infinite horizon risk-sensitive control is the principal
eigenvalue of a certain positive operator.
For the case of compact domain, Chang has built upon a nonlinear version of
the Krein--Rutman theorem to give a 'min-max' characterization of this
eigenvalue which may be viewed as a generalization of the classical
Collatz--Wielandt formula for the Perron--Frobenius eigenvalue of a
non-negative irreducible matrix.
We apply this formula to the Nisio semi group associated with risk-sensitive
control and derive a variation all characterization of the optimal
risk-sensitive cost.
For the linear, i.e., uncontrolled case, this is seen to reduce to the
celebrated Donsker-Varadhan formula for principal eigenvalue of a second
order elliptic operator.
\end{abstract}

\maketitle

\textit{MSC 2010 subject classifications:}
Primary  60J60, Secondary 60F10, 93E20
\medskip

\textit{Key words and phrases:} Risk-sensitive control; Collatz--Wielandt formula;
Nisio semigroup;
variational formulation; principal eigenvalue; Donsker--Varadhan functional

\section{Introduction}

We consider the infinite horizon risk-sensitive control problem for a
controlled reflected diffusion in a bounded domain.
This seeks to minimize the asymptotic growth rate of the expected
`exponential of integral' cost, which in turn coincides with the principal
eigenvalue of a quasi-linear elliptic operator defined as the
pointwise envelope of a family of linear elliptic operators parametrized by
the `control' parameter.
The Kre{\u\i}n-Rutman theorem has been widely applied to study the time-asymptotic
behavior of linear parabolic equations \cite[Chapter~7]{Smith-95}.
A recent extension of the Kre{\u\i}n-Rutman theorem to positively 1-homogeneous
compact (nonlinear)
operators and the ensuing variational formulation for the positive
eigenpair extends the classical Collatz--Wielandt formula
for the Perron-Frobenius eigenvalue of irreducible non-negative matrices.
Using this, we are able to obtain a variational formulation for the
positive eigenpair that reduces to
the celebrated Donsker--Varadhan characterization thereof in the linear case.
In the linear case, the eigenvalue in the positive eigenpair coincides
with the principal eigenvalue.
This is not in general true for the nonlinear
case. Hence we obtain a Collatz-Wielandt formula for the unique
positive eigenpair (see the example in Remark~\ref{R4.2}).
This establishes interesting connections between  theory of
risk-sensitive control,
nonlinear Kre{\u\i}n-Rutman theorem, and  Donsker--Varadhan theory.

\section{Risk-sensitive control}\label{S2}
Let $Q \subset \mathbb{R}^d$ be an open bounded domain with a
$C^{3}$ boundary $\partial{Q}$  and  $\Bar{Q}$ denote its closure.
Consider a reflected controlled diffusion $X(\cdot)$ taking values in the
bounded domain $\Bar{Q}$ satisfying
\begin{equation}\label{sde}
\begin{split}
dX(t) &\;=\; b(X(t),v(t))\,dt + \sigma(X(t))\,dW(t)
- \gamma(X(t))\,d\xi(t)\,,\\[5pt]
d\xi(t) &\;=\; I\{X(t) \in \partial{Q}\}\,d\xi(t)
\end{split}
\end{equation}
for $t \ge 0$, with $X(0) = x$ and $\xi(0) = 0$. Here:
\begin{itemize}
\item[(a)] $b : \Bar{Q}\times\cV \to \mathbb{R}^d$ for a prescribed
compact metric control space $\cV$ is continuous and Lipschitz in its first
argument uniformly with respect to the second,

\item[(b)]
$\sigma : \Bar{Q} \to \mathbb{R}^{d\times d}$ is continuously
differentiable, its derivatives are H\"older continuous
with exponent $\beta_{0}>0$, and is uniformly non-degenerate in
the sense that the minimum eigenvalue of
\begin{equation*}
a(x)\;=\;[[a_{ij}(x)]]\;\df\;\sigma(x)\sigma^T(x)
\end{equation*}
is bounded away from zero.

\item[(c)]
$\gamma : \mathbb{R}^d \to \mathbb{R}^d$ is co-normal, i.e.,
$\gamma(x) = [\gamma_{1}(x), \dotsc, \gamma_d(x)]^T$, where
\begin{equation*}
\gamma_i(x) \;=\; \sum_{i=1}^da_{ij}(x)n_{j}(x)\,,\quad
x\in\partial Q\,,
\end{equation*}
$n(x)= [n_{1}(x), \dotsc, n_d(x)]^T$ is the unit outward normal.

\item[(d)]
$W(\cdot)$ is a $d$-dimensional standard Wiener process,

\item[(e)]
$v(\cdot)$ is a $\cV$-valued measurable process satisfying the
non-anticipativity condition:
for $t > s \ge 0$, $W(t) - W(s)$ is independent of $\{v(y), W(y): y \le s\}$.
A process $v$ satisfying this property is called an `admissible control'.
\end{itemize}

Let $r : \Bar{Q}\times\cV\to\mathbb{R}_{+}$ be a continuous
`running cost' function which is Lipschitz in its first argument uniformly with
respect to the second.
We define
\begin{equation*}
r_{\rm{max}}\;\df\;\max_{(x,v)\in\Bar{Q}\times\cV}|r(x,v)|\,.
\end{equation*}

The infinite horizon risk-sensitive problem aims to minimize the cost
\begin{equation}\label{risk}
\limsup_{T\uparrow\infty}\;
\frac{1}{T}\log E\left[e^{\int_{0}^Tr(X(s), v(s))ds}\right]\,,
\end{equation}
i.e., the mean asymptotic growth rate of the exponential of the total cost.
See \cite{Whit} for background and motivation.

We define
\begin{equation}\label{E-gen}
\begin{split}
\cG f(x) &\;\df\; \frac{1}{2}\trace\left(a(x)\nabla^{2}f(x)\right)
+ \cH\bigl(x,f(x),\nabla f(x)\bigr)\,, \ \mbox{where},\\[5pt]
\cH(x,f,p) &\;\df\;
\min_{v\in\cV}\; \bigl[\langle b(x,v), p\rangle + r(x,v)f\bigr]\,,
\end{split}
\end{equation}
and
\begin{equation*}
C^{2}_{\gamma,+}(\Bar{Q}) \;\df\;\{ f \in C^{2}(\Bar{Q}) : \, f\ge0\,,\;
\nabla f \cdot \gamma \;=\; 0 \;\text{on}\; \partial{Q} \}\,.
\end{equation*}

The main result of the paper is the following.

\begin{theorem}\label{T-4.4}
There exists a unique pair
$(\rho,\varphi) \in \mathbb{R}\times C^{2}_{\gamma,+}(\Bar{Q})$
satisfying $\norm{\varphi}_{0;\Bar{Q}}=1$
which solves the pde
\begin{equation*}
\rho\,\varphi(x) \;=\; \cG \varphi(x)
\quad\text{in~} Q\,,
\quad \langle\nabla\varphi,\gamma\rangle = 0 \quad \text{on~} \partial{Q}\,,
\end{equation*}
Moreover,
\begin{align}\label{ET-4.4b}
\rho &\;=\; \inf_{f \in C^{2}_{\gamma,+}(\Bar{Q}),\,f>0}\;
\sup_{\nu \in \PA(\Bar{Q})}\;\int\frac{\cG f}{f}\,d\nu\\
&\;=\; \sup_{f \in C^{2}_{\gamma,+}(\Bar{Q}),\,f>0}\;
\inf_{\nu \in \PA(\Bar{Q})}\;\int\frac{\cG f}{f}\,d\nu\nonumber\,,
\end{align}
where $\PA(\Bar{Q})$  denotes the space of probability measures on
$\Bar{Q}$ with the Prohorov topology.
\end{theorem}

The first part of the theorem is contained in Lemma~\ref{L-4.1}.
The second part is proved in Section~\ref{S4.2}.

The notation used in the paper is summarized below.

\begin{notation}
The standard Euclidean norm in $\RR^{d}$ is denoted by $\abs{\,\cdot\,}$.
The set of nonnegative real numbers is denoted by $\RR_{+}$ and
$\NN$ stands for the set of natural numbers.
The closure, the boundary and the complement
of a set $A\subset\RR^{d}$ are denoted
by $\overline{A}$, $\partial{A}$ and $A^{c}$, respectively.

We adopt the notation
$\partial_{t}\df\tfrac{\partial}{\partial{t}}$, and for $i,j\in\NN$,
$\partial_{i}\df\tfrac{\partial~}{\partial{x}_{i}}$ and
$\partial_{ij}\df\tfrac{\partial^{2}~}{\partial{x}_{i}\partial{x}_{j}}$.
For a nonnegative multi-index $\alpha=(\alpha_{1},\dotsc,\alpha_{d})$
we let $D^{\alpha}\df \partial_{1}^{\alpha_{1}}\dotsb\partial_{d}^{\alpha_{d}}$
and $\abs{\alpha}\df \alpha_{1}+\dotsb+\alpha_{d}$.
For a domain $Q$ in $\RR^{d}$ and $k=0,1,2,\dotsc$,
we denote by $C^{k}(Q)$ the set of functions
$f:Q\to\RR$ whose derivatives $D^{\alpha}f$ for $\abs{\alpha}\le k$
are continuous and bounded.
For $k=0,1,2,\dotsc$, we define
\begin{equation*}
[f]_{k;Q}\;\df\;\max_{\abs{\alpha}=k}\; \sup_{Q}\;\abs{D^{\alpha}f}\quad\text{and}
\quad\norm{f}_{k;Q}\;\df\;\sum_{j=0}^{k}\;[f]_{j;Q}\,.
\end{equation*}
Also for $\delta\in(0,1)$ we define
\begin{equation*}
[g]_{\delta;Q} \;\df\;
\sup_{\substack{x,y \in Q\\x \neq y}}\;
\frac{|g(x) - g(y)|}{|x - y|^{\delta}}\quad\text{and}\quad
\norm{f}_{k+\delta;Q}
\;\df\; \norm{f}_{k;Q} + \max_{\abs{\alpha}=k}\;[D^{\alpha}f]_{\delta;Q}\,.
\end{equation*}
For $k=0,1,2,\dotsc$, and $\delta\in(0,1)$ we denote by
$C^{k+\delta}(Q)$ the space of all real-valued functions $f$ defined on $Q$
such that $\norm{f}_{k+\delta;Q}<\infty$.
Unless indicated otherwise, we always view
$C^{k+\delta}(Q)$ and $C^{k}(Q)$ as topological spaces under the norms
$\norm{\,\cdot\,}_{k+\delta;Q}$ and $\norm{\,\cdot\,}_{k;Q}$ respectively.
We also write $C^{k+\delta}(\Bar{Q})$ and $C^{k}(\Bar{Q})$ if the derivatives
up to order $k$ are continuous on $\Bar{Q}$.
Thus $C^{\delta}(\Bar{Q})$ stands for the Banach space of real-valued functions
defined on $\Bar{Q}$ that are H\"older continuous with exponent $\delta\in(0,1)$.

Let $G$ be a domain in $\RR_{+}\times\RR^{d}$.
Recall that $C^{1,2}(G)$ stands for the set of bounded
continuous real-valued functions $\varphi(t,x)$ defined on $G$ such
that the derivatives $D^{\alpha}\varphi$, $\abs{\alpha}\le 2$
and $\partial_{t}\varphi$ are bounded and continuous in $G$.
Let $\delta\in(0,1)$.
We define
\begin{align*}
[\varphi]_{\nicefrac{\delta}{2},\delta;G} &\;\df\;
\sup_{\substack{(t,x)\ne(s,y)\\(t,x),\,(s,y)\in G}}\;
\frac{\abs{\varphi(t,x)-\varphi(s,y)}}
{\abs{x-y}^{\delta}+\abs{t-s}^{\nicefrac{\delta}{2}}}\,,\\[5pt]
\norm{\varphi}_{\nicefrac{\delta}{2},\delta;G} &\;\df\;
\norm{\varphi}_{0;G} + [\varphi]_{\nicefrac{\delta}{2},\delta;G}\,.
\end{align*}
By $C^{\nicefrac{\delta}{2},\delta}(G)$ we denote the space of functions
$\varphi$ such that $\norm{\varphi}_{\nicefrac{\delta}{2},\delta;G}<\infty$.
The parabolic H\"older space $C^{1+\nicefrac{\delta}{2},2+\delta}(G)$
is the set of all real-valued functions defined on $G$ for which
\begin{equation*}
\norm{\varphi}_{1+\nicefrac{\delta}{2},2+\delta;G} \;\df\;
\max_{\abs{\alpha}\le 2}\;\norm{D^{\alpha}\varphi}_{\nicefrac{\delta}{2},\delta;G}
+\norm{\partial_{t}\varphi}_{\nicefrac{\delta}{2},\delta;G}
\end{equation*}
is finite.
It is well known that $C^{1+\nicefrac{\delta}{2},2+\delta}(G)$ equipped with
the norm $\norm{\varphi}_{1+\nicefrac{\delta}{2},2+\delta;G}$ is a Banach space.

For a Banach space $\mathcal{Y}$ of continuous functions on $\Bar{Q}$
we denote by $\mathcal{Y}_{+}$ its positive cone and by
$\mathcal{Y}_{\gamma}$ the subspace of $\mathcal{Y}$
consisting of the functions $f$ satisfying
$\nabla f \cdot \gamma \;=\; 0$ on $\partial{Q}$.
Also let $\mathcal{Y}^{*}$ denote the dual of $\mathcal{Y}$
and $\mathcal{Y}^{*}_{+}$ the dual cone of $\mathcal{Y}_{+}$.
For example $\bigl(C^{2}_{\gamma}(\Bar{Q})\bigr)^{*}_{+}$ is defined by
\begin{equation*}
\bigl(C^{2}_{\gamma}(\Bar{Q})\bigr)^{*}_{+} \;\df\;
\Bigl\{ \Lambda\in \bigl(C^{2}_{\gamma}(\Bar{Q})\bigr)^{*} :
\Lambda(f)\ge 0\quad\forall f\in C^{2}_{\gamma,+}(\Bar{Q})\Bigr\}\,.
\end{equation*}
We define the operator $\LA_{v}$ on $C^{2}(\Bar{Q})$ by
\begin{equation}\label{E-Lu}
\LA_{v}f(\cdot) \;\df\; \frac{1}{2}\trace\left(a(\cdot)\nabla^{2}f(\cdot)\right)
+ \langle b(\cdot, v), \nabla f(\cdot)\rangle\,, \quad v\in\cV\,,
\end{equation}
where $\nabla^{2}$ denotes the Hessian.
\end{notation}

\section{The Nisio semigroup}

Associated with the above control problem, define for each $t \ge 0$ the operator
$S_{t} : C(\Bar{Q}) \to C(\Bar{Q})$ by
\begin{equation}\label{semi}
S_{t}f(x)\;\df\;\inf_{v(\cdot)}\;
E_{x}\left[e^{\int_{0}^t r(X(s), v(s))ds}f(X(t))\right]\,,
\end{equation}
where the `$\inf$' is over all admissible controls.

A standard consequence of the dynamic programming principle is that this defines
a semigroup, the so called Nisio semigroup.
In fact, the following well known properties thereof can be proved along
the lines of \cite[Theorem~1, pp.~298--299]{Nisi}.
Let
\begin{equation}\label{E-Tu}
T_{t}^uf\;\df\;E_{x}\left[e^{\int_{0}^tr(X^u(s), u)ds}f(X^u(t))\right]\,,
\end{equation}
where $X^u(\cdot)$ is the reflected diffusion in \eqref{sde}
for $v(\cdot) \equiv u\in\cV$.

\begin{theorem}\label{T-3.1}
$\{S_{t}, t \ge 0\}$ satisfies the following properties:
\begin{enumerate}
\item Boundedness: $\|S_{t}f\|_{0;\Bar{Q}} \le
e^{r_{\rm{max}}t}\|f\|_{0;\Bar{Q}}$.
Furthermore, $S_{t}\bm{1} \ge e^{r_{\rm{min}}t} \bm{1}$,
where $\bm{1}$ is the constant function $\equiv 1$, and
$r_{\rm{min}} = \min_{(x, u)}\,r(x,u)$.
\smallskip
\item Semigroup property: $S_{0} = I$
and $S_{t}\circ S_{s} = S_{t + s}$ for $s, t \ge 0$.
\smallskip
\item Monotonicity:
$f \ge$ (resp., $>$) $g \;\Longrightarrow\; S_{t}f \ge$ (resp., $>$) $S_{t}g$.
\smallskip
\item Lipschitz property:
$\|S_{t}f - S_{t} g\|_{0;\Bar{Q}} \le e^{r_{\rm{max}}t}\|f - g\|_{0;\Bar{Q}}$.
\smallskip
\item Strong continuity: $\|S_{t}f - S_{s}f\|_{0;\Bar{Q}} \to 0$ as $t \to s$.
\smallskip
\item Envelope property: $T^u_{t}f \ge S_{t}f$ for all $u \in U$, and
$S_{t}f \ge S_{t}'f$ for any
other $\{S_{t}'\}$ satisfying  this along with the foregoing properties.
\smallskip
\item Generator: the infinitesimal generator of $\{S_{t}\}$ is $\cG$
defined in \eqref{E-gen}.

\end{enumerate}
\end{theorem}

We can say more by invoking p.d.e.\ theory.
We start with the following theorem that characterizes $S_{t}$ as the
solution of a parabolic p.d.e.

\begin{theorem}\label{T-3.2}
For each $f \in C^{2+\delta}_{\gamma}(\Bar{Q})$,
$\delta\in(0,\beta_{0})$, and $T>0$,
the quasi-linear parabolic p.d.e.
\begin{equation}\label{pde}
\frac{\partial}{\partial t}\psi(t,x) \;=\; \inf_{v\in\cV}\;
\bigl(\LA_{v} \psi(t,x) + r(x,v)\psi(t,x)\bigr)
\quad \text{~in~} (0,T]\times Q\,,
\end{equation}
with $\psi(0,x) = f(x)$ for all $x \in \Bar{Q}$
and
\begin{equation*}
\langle\nabla \psi(t,x), \gamma(x)\rangle = 0\qquad \text{for all}~
(t,x) \in (0,T]\times\partial{Q}\,,
\end{equation*} has a unique solution in
$C^{1+\nicefrac{\delta}{2},2+\delta}\bigl([0,T]\times\Bar{Q}\bigr)$.
The solution $\psi$ has the stochastic representation
\begin{equation}\label{stochasticrep}
\psi(t,x) \;=\; \inf_{v(\cdot)}\;E_{x} \left[e^{\int^t_{0} r(X(s), v(s))\,ds}
 f(X(t))\right] \qquad \forall (t,x)\in[0, T]\times\Bar{Q} \,.
\end{equation}
Moreover,
\begin{align*}
\|\psi\|_{1,2;[0,T]\times\Bar{Q}} &\;\le\; K_{1}, \\[5pt]
\|\nabla^{2}\psi(s, \cdot)\|_{\delta;Q} &\;\le\;
K_{2} \quad\text{for all~} s\in [0,T]\,,
\end{align*}
where the constants $K_{1},\,K_{2} > 0$ depend only on
$T, \|a\|_{1+\beta_{0}; Q}$, the Lipschitz constants of $b, r$,
the lower bound on the  eigenvalues of $a$, the boundary
$\partial{Q}$ and $\|f\|_{2+\delta;Q}$.
\end{theorem}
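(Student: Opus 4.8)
The plan is to read \eqref{pde} as a semilinear parabolic oblique-derivative problem whose principal part $\partial_{t}-\tfrac12\trace\bigl(a(x)\nabla^{2}\cdot\bigr)$ is linear, uniformly parabolic, and has coefficients $a\in C^{1+\beta_{0}}(\Bar{Q})$ not involving the control, while the nonlinearity $\cH(x,\psi,\nabla\psi)=\min_{v\in\cV}\bigl[\langle b(x,v),\nabla\psi\rangle+r(x,v)\psi\bigr]$, being a minimum over a compact set of maps that are affine in $(\psi,\nabla\psi)$ and Lipschitz in $x$ uniformly in $v$, is globally Lipschitz in $(\psi,p)$ and in $x$ and grows at most linearly, $\abs{\cH(x,\psi,p)}\le C\bigl(1+\abs{\psi}+\abs{p}\bigr)$. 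The hypothesis $f\in C^{2+\delta}_{\gamma}(\Bar{Q})$ furnishes exactly the zeroth-order compatibility condition $\langle\nabla f,\gamma\rangle=0$ on $\partial{Q}$ needed for regularity up to $t=0$, and the $C^{3}$ boundary together with non-degeneracy of $a$ makes $\gamma$ a smooth, uniformly oblique (co-normal) field, so the linear parabolic $L^{p}$- and Schauder theory for oblique-derivative problems is available throughout.

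I would first derive the a priori estimates for an arbitrary solution $\psi$. The sup bound $\norm{\psi(t,\cdot)}_{0;\Bar{Q}}\le e^{r_{\max}t}\norm{f}_{0;\Bar{Q}}$ follows from the parabolic maximum principle by comparison with the spatially constant functions $\pm e^{r_{\max}t}\norm{f}_{0;\Bar{Q}}$. Next, choosing a measurable selector $v^{*}(t,x)$ of the minimizer in $\cH$ and viewing \eqref{pde} as the linear equation $\partial_{t}\psi-\tfrac12\trace(a\nabla^{2}\psi)-\langle b(\cdot,v^{*}),\nabla\psi\rangle=r(\cdot,v^{*})\,\psi$, with bounded measurable drift and bounded right-hand side, the $L^{p}$ estimate for the parabolic oblique-derivative problem gives $\norm{\psi}_{W^{1,2}_{p}((0,T)\times Q)}\le C_{p}$ for every $p<\infty$; taking $p$ large and using the parabolic Sobolev embedding, $\psi$ and $\nabla\psi$ lie in $C^{\nicefrac{\delta}{2},\delta}(\overline{Q_{T}})$ with norm controlled by the data. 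The crucial point is that $h:=\cH(\cdot,\psi,\nabla\psi)$ is now a minimum over $\cV$ of functions that are H\"older in $(t,x)$ — no selector enters — hence $h\in C^{\nicefrac{\delta}{2},\delta}(\overline{Q_{T}})$, so the parabolic Schauder estimate for $\partial_{t}\psi-\tfrac12\trace(a\nabla^{2}\psi)=h$ with data $f$ and the co-normal condition yields $\psi\in C^{1+\nicefrac{\delta}{2},2+\delta}(\overline{Q_{T}})$ together with $\norm{\psi}_{1,2,\infty;[0,T]\times Q}\le K_{1}$ and $\norm{\nabla^{2}\psi(s,\cdot)}_{\delta;Q}\le K_{2}$, $K_{1},K_{2}$ depending only on $T$, $\norm{a}_{1+\beta_{0};Q}$, the Lipschitz constants of $b,r$, the ellipticity of $a$, $\partial{Q}$ and $\norm{f}_{2+\delta;Q}$.

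For existence one may invoke the classical theory of quasilinear parabolic oblique-derivative problems, whose structure conditions hold trivially here since $\cH$ is Lipschitz with linear growth; alternatively, for small $\tau$ the map sending $w$ to the solution of the linear co-normal problem with source $\cH(\cdot,w,\nabla w)$ is a contraction on a small closed ball about $f$ in $C^{\nicefrac{\delta}{2},\delta}(\overline{Q_{\tau}})$ (the ``solution-minus-initial-data'' operator of a linear parabolic problem has small norm for small $\tau$, and $\cH$ is Lipschitz in $(w,\nabla w)$), producing a local $C^{1+\nicefrac{\delta}{2},2+\delta}$ solution that extends to $[0,T]$ because the bounds of the previous paragraph are uniform in the length of the interval. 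The stochastic representation \eqref{stochasticrep} then follows by verification: for $(t,x)\in[0,T]\times\Bar{Q}$ and an admissible $v(\cdot)$ with $X(\cdot)$ the reflected process of \eqref{sde} started at $x$, It\^o's formula applied to $Y(s):=\exp\bigl(\int_{0}^{s}r(X(u),v(u))\,du\bigr)\psi(t-s,X(s))$, $s\in[0,t]$, has no $d\xi$-contribution — since $\langle\nabla\psi,\gamma\rangle=0$ on $[0,T]\times\partial{Q}$ (at $t=0$ this is the compatibility condition) and $\xi$ increases only on $\{X\in\partial{Q}\}$ — and its drift equals $\exp(\cdots)\bigl[-\partial_{t}\psi+\LA_{v(s)}\psi+r(X(s),v(s))\psi\bigr](t-s,X(s))\ge0$ by \eqref{pde}; hence $Y$ is a submartingale, $E_{x}\bigl[e^{\int_{0}^{t}r(X(s),v(s))ds}f(X(t))\bigr]\ge\psi(t,x)$, and the infimum over admissible controls gives ``$\ge$'' in \eqref{stochasticrep}. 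Taking instead the Markov control minimizing $\cH\bigl(y,\psi(t-s,y),\nabla\psi(t-s,y)\bigr)$ — a measurable selector exists since $\cH$ is continuous and $(\psi,\nabla\psi)$ continuous — makes the drift vanish, $Y$ a martingale, and equality holds, giving ``$\le$''. Uniqueness follows since the right-hand side of \eqref{stochasticrep} does not involve $\psi$ and the verification used only that $\psi$ solves \eqref{pde}; equivalently, from the comparison principle for \eqref{pde} via the elementary bound $\inf_{v}(\LA_{v}\psi_{1}+r\psi_{1})-\inf_{v}(\LA_{v}\psi_{2}+r\psi_{2})\le\LA_{\tilde v}(\psi_{1}-\psi_{2})+r(\cdot,\tilde v)(\psi_{1}-\psi_{2})$ for a selector $\tilde v$ and the maximum principle applied to $e^{-r_{\max}t}(\psi_{1}-\psi_{2})$.

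The step I expect to require genuine care is the a priori $C^{1+\nicefrac{\delta}{2},2+\delta}$ bound up to the boundary: carrying the $L^{p}$-to-H\"older bootstrap and then the Schauder estimate through for the co-normal oblique-derivative problem, checking the compatibility of $f$ with the boundary operator at $t=0$ (automatic from $f\in C^{2+\delta}_{\gamma}$), and tracking the dependence of the constants exactly as claimed. It is precisely because $\cH$ is Lipschitz — the gradient entering with linear, not quadratic, growth — that the argument stays within linear parabolic theory and avoids Bernstein- or De\,Giorgi--Nash--Moser-type gradient estimates; the measurable-selection step in the verification is then routine given the continuity of $\psi$ and $\nabla\psi$, and the only other point needing a word is the existence of a weak reflected diffusion driven by the bounded measurable Markov control, which is standard for non-degenerate diffusions with co-normal reflection in a $C^{3}$ domain.
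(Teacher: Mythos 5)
Your proof is correct, and it takes a genuinely different --- and more self-contained --- route than the paper. The paper's proof is a one-line citation to \cite[Theorems~7.4 and 7.2, Ch.~V]{Lady}, i.e.\ the abstract existence/uniqueness/Schauder theory for quasilinear parabolic second boundary-value problems; the verification of the structure conditions is left implicit, and the stochastic representation \eqref{stochasticrep} is asserted without proof. You instead exploit the fact that \eqref{pde} is \emph{semilinear} (the principal part $\partial_t-\tfrac12\trace(a\nabla^2\cdot)$ is control-free and linear) and that $\cH$ is globally Lipschitz with linear growth, so the whole argument can stay inside linear parabolic theory: a measurable-selector $W^{1,2}_p$ bound, parabolic Sobolev embedding to get $(\psi,\nabla\psi)$ H\"older, the observation that a pointwise $\min$ over the compact $\cV$ of H\"older functions is H\"older so $h=\cH(\cdot,\psi,\nabla\psi)\in C^{\nicefrac{\delta}{2},\delta}$, and then the linear Schauder estimate for the co-normal problem with this source. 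This avoids the heavier structure conditions of the quasilinear machinery and makes the dependence of $K_1,K_2$ on the data transparent. You also actually \emph{prove} \eqref{stochasticrep} by It\^o/verification, correctly noting that the $d\xi$-contribution vanishes because $\langle\nabla\psi,\gamma\rangle=0$ on $\partial Q$ for all $t\in[0,T]$ (the compatibility $f\in C^{2+\delta}_\gamma$ covers $t=0$) while $\xi$ increases only on $\partial Q$, and that a measurable minimizing Markov selector turns the submartingale inequality into equality. Your existence step offers both the paper's route (cite quasilinear theory) and a small-time contraction plus continuation via the a priori bounds; either is fine, and the latter is more elementary. In short: correct, strictly more explicit than the paper's citation, and it fills in the stochastic-representation step that the paper omits.
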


\begin{proof}
This follows by \cite[Theorem~7.4, p.~491]{Lady} and
\cite[Theorem~7.2, pp.~486--487]{Lady}.
\end{proof}

\begin{lemma}\label{L-3.1}
Let $\delta\in(0, \beta_{0})$.
For each $t > 0$, the map $S_{t} : C^{2+\delta}_{\gamma}(\Bar{Q})
\to C^{2+\delta}_{\gamma}(\Bar{Q})$ is compact.
\end{lemma}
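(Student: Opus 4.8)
The plan is to show that $S_{t}$ maps bounded subsets of $C^{2+\delta}_{\gamma}(\Bar{Q})$ into bounded subsets of $C^{2+\delta'}(\Bar{Q})$ for some $\delta'\in(\delta,\beta_{0})$, and then to invoke the compactness of the embedding $C^{2+\delta'}(\Bar{Q})\hookrightarrow C^{2+\delta}(\Bar{Q})$. Fix such a $\delta'$ and let $B\subset C^{2+\delta}_{\gamma}(\Bar{Q})$ be bounded, say $\norm{f}_{2+\delta;Q}\le M$ for all $f\in B$. By Theorem~\ref{T-3.2} together with \eqref{semi}, for each $f\in B$ we have $S_{t}f=\psi_{f}(t,\cdot)$, where $\psi_{f}\in C^{1+\nicefrac{\delta}{2},2+\delta}\bigl(\overline{Q_{t}}\bigr)$ solves \eqref{pde} with initial datum $f$ and conormal boundary condition, and the bounds $\norm{\psi_{f}}_{1,2,\infty;[0,t]\times Q}\le K_{1}$ and $\norm{\nabla^{2}\psi_{f}(s,\cdot)}_{\delta;Q}\le K_{2}$ hold for all $s\in[0,t]$, with $K_{1},K_{2}$ depending only on $t$, $M$ and the fixed data; in particular they are uniform over $f\in B$.

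The heart of the matter is a parabolic bootstrap that upgrades the H\"older exponent. Put $h_{f}(s,x)\df\cH\bigl(x,\psi_{f}(s,x),\nabla\psi_{f}(s,x)\bigr)$, with $\cH$ as in \eqref{E-gen}, so that $\psi_{f}$ solves the \emph{linear} problem
\begin{equation*}
\partial_{s}\psi_{f}\;=\;\tfrac12\trace\bigl(a\,\nabla^{2}\psi_{f}\bigr)+h_{f}
\quad\text{in }(0,t]\times Q\,,\qquad
\langle\nabla\psi_{f},\gamma\rangle\;=\;0\quad\text{on }(0,t]\times\partial{Q}\,.
\end{equation*}
Since $b(\cdot,v)$ and $r(\cdot,v)$ are Lipschitz in the space variable uniformly over $v\in\cV$, and $(f,p)\mapsto\cH(x,f,p)$ is affine with coefficients bounded by $\sup\abs{b}$ and $r_{\rm{max}}$, the map $\cH$ is Lipschitz on $\Bar{Q}\times[-K_{1},K_{1}]\times\{\abs{p}\le K_{1}\}$; together with the bounds on $\psi_{f}$ this shows $h_{f}(s,\cdot)$ is Lipschitz in $x$, uniformly over $f\in B$ and $s\in[0,t]$. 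For the time variable, a standard interpolation inequality on $Q$ applied to $\psi_{f}(s,\cdot)-\psi_{f}(s',\cdot)$, together with $\norm{\partial_{s}\psi_{f}}_{0;[0,t]\times Q}\le K_{1}$ and $\norm{\nabla^{2}\psi_{f}}_{0;[0,t]\times Q}\le K_{1}$, gives $\babs{\nabla\psi_{f}(s,x)-\nabla\psi_{f}(s',x)}\le C K_{1}\abs{s-s'}^{1/2}$ for $s,s'\in[0,t]$, so $h_{f}$ is $\tfrac12$-H\"older in $s$, uniformly over $B$. Since $\delta'<\beta_{0}\le 1$ and $Q$ is bounded, it follows that $\{h_{f}:f\in B\}$ is bounded in $C^{\nicefrac{\delta'}{2},\delta'}\bigl([0,t]\times Q\bigr)$.

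Next I would apply the interior-in-time Schauder estimate for the linear conormal parabolic problem displayed above --- legitimate because $a\in C^{1+\beta_{0}}$, $\partial{Q}\in C^{3}$ and the conormal field satisfies $\gamma\in C^{1+\beta_{0}}$ while $\delta'<\beta_{0}$ --- on the nested cylinders $[\nicefrac{3t}{4},t]\times\Bar{Q}\subset[\nicefrac{t}{2},t]\times\Bar{Q}$, obtaining
\begin{equation*}
\norm{\psi_{f}}_{1+\nicefrac{\delta'}{2},2+\delta';[\nicefrac{3t}{4},t]\times Q}
\;\le\;C\Bigl(\norm{h_{f}}_{\nicefrac{\delta'}{2},\delta';[\nicefrac{t}{2},t]\times Q}
+\norm{\psi_{f}}_{0;[\nicefrac{t}{2},t]\times Q}\Bigr)\;\le\;K_{3}
\end{equation*}
with $K_{3}$ independent of $f\in B$; the estimate is \emph{interior in time} precisely because the initial datum $f$ lies only in $C^{2+\delta}$. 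In particular $\norm{S_{t}f}_{2+\delta';Q}=\norm{\psi_{f}(t,\cdot)}_{2+\delta';Q}\le K_{3}$ for every $f\in B$, so $S_{t}(B)$ is bounded in $C^{2+\delta'}(\Bar{Q})$. By the Arzel\`a--Ascoli theorem applied to derivatives of order $\le2$ the embedding $C^{2+\delta'}(\Bar{Q})\hookrightarrow C^{2+\delta}(\Bar{Q})$ is compact, so $S_{t}(B)$ is precompact in $C^{2+\delta}(\Bar{Q})$; and since each $S_{t}f$ satisfies $\langle\nabla S_{t}f,\gamma\rangle=0$ on $\partial{Q}$ --- a constraint cutting out a closed subspace --- $S_{t}(B)$ is precompact in $C^{2+\delta}_{\gamma}(\Bar{Q})$. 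As $B$ was arbitrary, $S_{t}$ is compact.

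The main obstacle is exactly this gain of H\"older exponent: Theorem~\ref{T-3.2} bounds $S_{t}f$ only in $C^{2+\delta}$ with the \emph{same} exponent, and a ball of $C^{2+\delta}(\Bar{Q})$ is not precompact in itself, so one genuinely has to extract strictly better regularity. Two features make this possible and must be handled with care: the data $b,r$ are Lipschitz --- hence H\"older of every exponent less than one, not merely $C^{\delta}$ --- which makes the forcing term $h_{f}$ better than $C^{\delta}$ in $x$; and the parabolic flow smooths in time, which is what the interior-in-time estimate exploits. (Freezing time and working with the elliptic equation for $\psi_{f}(t,\cdot)$ would be circular, since there the term $\tfrac12\trace\bigl(a\nabla^{2}\psi_{f}(t,\cdot)\bigr)$ is a priori only $C^{\delta}$.) Note finally that the measurable minimizing selector implicit in $\cH$ plays no role, because $h_{f}$ is treated throughout as a prescribed inhomogeneous term of known regularity.
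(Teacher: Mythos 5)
Your proof is correct and takes essentially the same route as the paper: treat the quasi-linear equation as linear with forcing $\cH(x,\psi,\nabla\psi)$, observe that the Lipschitz assumptions on $b,r$ upgrade the H\"older exponent of this forcing, apply interior-in-time conormal Schauder estimates to gain regularity away from $t=0$, and conclude via the compact embedding $C^{2+\delta'}(\Bar{Q})\hookrightarrow C^{2+\delta}(\Bar{Q})$. The only stylistic difference is that the paper localizes in time by multiplying by a smooth cutoff $g(t)$ (so that $\tilde\psi=g\psi$ has zero initial data and the resulting estimate holds up to $t=0$), whereas you work directly on nested time cylinders; you are also somewhat more explicit than the paper about uniformity of the bounds over a bounded set $B$, which is needed to deduce compactness rather than merely gain of regularity.
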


\begin{proof}
Suppose $f\in C^{2+\delta}_{\gamma}(\Bar{Q})$ for some $\delta\in(0,\beta_{0})$.
Fix any $T>0$.
Let $g:[0,\infty)\to[0,\infty)$ be a smooth function such that
$g(0)=0$ and $g(s)=1$ for $s\in[\nicefrac{T}{2},\infty)$.
Define $\Tilde{\psi}(t,x) = g(t)\psi(t,x)$, with $\psi$ as
in Theorem~\ref{T-3.2}.
Then $\Tilde{\psi}$ satisfies
\begin{equation}\label{E-cutoff}
\frac{\partial}{\partial t}\Tilde{\psi}(t,x)
- \frac{1}{2}\trace\left(a(x)\nabla^{2}\Tilde{\psi}(t,x)\right)\;=\;
\frac{\partial g}{\partial t}(t) \psi(t,x)
+ g(t)\cH\bigl(x,\psi(t,x),\nabla \psi(t,x)\bigr)
\end{equation}
in $(0,\infty)\times Q$, $\Tilde{\psi}(0,x) = 0$ on $\Bar{Q}$ and
 $\langle\nabla \Tilde{\psi}(t,x), \gamma(x)\rangle = 0$ for all
$(t,x) \in (0,\infty)\times\partial{Q}$.
It is well known that  $\nicefrac{\partial}{\partial_{x_{i}}}$ is a
bounded operator
from $C^{1+\nicefrac{\delta}{2},2+\delta}\bigl([0,T]\times\Bar{Q}\bigr)$
to $C^{\nicefrac{(1+\delta)}{2},1+\delta}\bigl([0,T]\times\Bar{Q}\bigr)$
\cite[p.~126]{Krylov-Lect}.
In particular
\begin{equation*}
\sup_{x\in\Bar{Q}}\;
\sup_{s\ne t}\;
\frac{\bigl| \partial_{x^{i}}\psi (s,x) - \partial_{x^{i}}\psi(t,x)\bigr|}
{|s-t|^{\nicefrac{(1+\delta)}{2}}}\;<\;\infty\,.
\end{equation*}
Since $\cH$ is Lipschitz in its arguments and $g$ is smooth it follows
that the r.h.s.\ of \eqref{E-cutoff} is
in $C^{\nicefrac{\beta}{2},\beta}\bigl([0,T]\times\Bar{Q}\bigr)$
for any $\beta\in(0,1)$.
Then it follows by the interior estimates in
\cite[Theorem~10.1, pp.~351-352]{Lady} that
$\Tilde{\psi}\in C^{1+\nicefrac{\beta}{2},2+\beta}\bigl([T,T+1]\times\Bar{Q}\bigr)$
for all $\beta\in(0,\beta_{0})$.
Since $\psi=\Tilde{\psi}$ on $[T,T+1]$ it follows that
$S_{T}f\in C^{2+\beta}_{\gamma}(\Bar{Q})$ for all $\beta\in(0,\beta_{0})$.
Since the inclusion $C^{2+\beta}_{\gamma}(\Bar{Q})\hookrightarrow
C^{2+\delta}_{\gamma}(\Bar{Q})$ is compact for $\beta>\delta$, the result follows.
\end{proof}

\section{An abstract Collatz--Wielandt formula}

The classical Collatz--Wielandt formula (see \cite{Coll,Weil})
characterizes the principal (i.e., the Perron-Frobenius) eigenvalue $\kappa$
of an irreducible non-negative matrix $Q$ as
(see \cite[Chapter~8]{Mey})
\begin{align*}
\kappa\;&=\;\max_{\{x = (x_{1},\dotsc,x_d) \,:\, x_i \ge 0\}}\;
\min_{\{i \,:\, x_i > 0\}}
\left(\frac{(Qx)_i}{x_i}\right)\\[5pt]
&=\; \min_{\{x = (x_{1}, \dotsc, x_d) \,:\, x_i > 0\}}\;
\max_{\{i \,:\, x_i > 0\}}\left(\frac{(Qx)_i}{x_i}\right).
\end{align*}
An infinite dimensional version of this was recently given by Chang \cite{Chan}
as follows.
Let $\cX$ be a real Banach space
with order cone $P$, i.e., a nontrivial closed subset of $\cX$.
Define $-P \df \{-x : x\in P\}$ and $\dot{P}\;\df\;P\backslash\{\theta\}$.
We assume that the cone $P$ satisfies
\begin{enumerate}
\item[(a)]
$tP\subset P$ for all $t\ge0$, where $tP = \{tx : x\in P\}\,$;
\item[(b)]
$P+P\subset P\,$;
\item[(c)]
$P\cap(-P)=\{\theta\}$, where $\theta$ denotes the zero vector of $\cX$.
\end{enumerate}
We write $x \preceq y$ if $y - x \in P$, and $x \prec y$ if
$x \preceq y$ and $x\ne y$.
Define the dual cone
\begin{equation*}
P^*\;\df\;\{x \in \cX^* : \langle x^*,x\rangle\ge 0\quad\forall x \in P\}\,.
\end{equation*}
A map $T : \cX \to \cX$ is said to be \emph{increasing} if
$x \preceq y \Longrightarrow T(x) \preceq T(y)$, and \emph{strictly increasing}
if $x \prec y \Longrightarrow T(x) \prec T(y)$.
If $\interior(P) \neq \varnothing$, and $T : \dot{P} \to \interior(P)$,
then $T$ is called \emph{strongly positive}, and if
$x \prec y \Longrightarrow T(y) - T(x)\in\interior(P)$
it is called \emph{strongly increasing}.
It is called \emph{positively 1-homogeneous} if $T(tx) = tT(x)$ for all $t > 0$
and  $x \in \cX$.
Also, a map $T : \cX \to \cX$ is called
\emph{completely continuous} if it is continuous and compact.
A generalization of the Kre{\u\i}n-Rutman theorem appears in
\cite{Maha}.
However the hypotheses in \cite[Theorem~2]{Maha} are not sufficient
for uniqueness of an eigenvector in $P$, so the conclusions of that theorem
are not correct.
The same error has propagated in \cite[Theorems~1.4, 4.8, and 4.13]{Chan}.
For a detailed discussion on this see the forthcoming paper \cite{Ari}.
A corrected version of \cite[Theorem~2]{Maha} is as follows:

\begin{theorem}\label{T-4.1}
Let $T : \cX \to \cX$ be an increasing,
positively 1-homogeneous, completely continuous
map such that for some $u \in P$ and $M > 0$, $MT(u) \succeq u$.
Then there exist $\lambda > 0$ and $ \Hat{x} \in \dot{P}$ such that
$T(\Hat{x}) = \lambda \Hat{x}$.
Moreover, if $T$ is strongly increasing
then $\lambda$ is the unique eigenvalue with an eigenvector in $P$.
\end{theorem}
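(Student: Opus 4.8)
The plan is to realize the eigenpair through the Collatz--Weilandt functional itself, using only the three structural properties of $T$ --- monotonicity, positive $1$-homogeneity, complete continuity --- together with the sub-eigenvector hypothesis. For $x\in\dot{P}$ set
\[
r(x)\;\df\;\sup\{t\ge0:\,tx\preceq T(x)\}\,.
\]
First one records the elementary facts: $T(\theta)=\theta$ by $1$-homogeneity, so $T$ maps $P$ into $P$ and $0$ belongs to the set above; if $tx\preceq T(x)$ held along $t\to\infty$, dividing by $t$ and using that $P$ is closed would give $-x\in P$, contradicting $x\in\dot{P}$, so $r(x)<\infty$; the supremum is attained (again $P$ closed); $r$ is positively $0$-homogeneous; and $r$ is upper semicontinuous at each point of $\dot{P}$ --- the only subtle point here being that $x_n\to y\ne\theta$ with $r(x_n)\to\infty$ is impossible, which follows from $x_n\preceq T(x_n)/r(x_n)\to\theta$ and closedness of $P$. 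Put $\lambda\;\df\;\sup_{x\in\dot{P}}r(x)$. The hypothesis $MT(u)\succeq u$ reads $\tfrac1M u\preceq T(u)$, hence $r(u)\ge 1/M$ and $\lambda\ge 1/M>0$.

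Next I would isolate the one monotonicity fact that drives everything: applying the increasing, $1$-homogeneous map $T$ to $r(x)x\preceq T(x)$ gives $r(x)T(x)\preceq T(T(x))$, so $r(T(x))\ge r(x)$ whenever $T(x)\ne\theta$; the Collatz--Weilandt number does not decrease under $T$. To produce a maximizer, take $x_n\in\dot{P}$ with $\norm{x_n}=1$ and $r(x_n)\uparrow\lambda$. By complete continuity, $T(x_n)\to y$ along a subsequence, with $y\in P$. The delicate point is $y\ne\theta$: from $r(x_n)x_n\preceq T(x_n)$ with $r(x_n)$ eventually bounded below by $\tfrac1{2M}$, together with the lower bound on $T$ furnished by the hypothesis and a separating functional $\varphi\in P^*$ with $\varphi(u)>0$, one argues that $\norm{T(x_n)}$ stays bounded away from $0$. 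Granting $y\in\dot{P}$, then since $r(T(x_n))\ge r(x_n)\to\lambda$, upper semicontinuity of $r$ at $y$ gives $r(y)\ge\lambda$, hence $r(y)=\lambda$ (in particular $\lambda<\infty$), i.e. $\lambda y\preceq T(y)$.

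It then remains to upgrade the sub-eigenvector $\hat x\;\df\;y$ to a genuine eigenvector, $T(\hat x)=\lambda\hat x$. Writing $w\;\df\;T(\hat x)-\lambda\hat x\in P$ and supposing $w\ne\theta$, one iterates $T$ --- note $\lambda^k\hat x\preceq T^k(\hat x)$, so $T^k(\hat x)\ne\theta$ and $r(T^k(\hat x))=\lambda$ for all $k$, i.e. each normalized iterate $\hat x_k\;\df\;T^k(\hat x)/\norm{T^k(\hat x)}$ is again a maximizer --- extracts a convergent subsequence of $\{T(\hat x_k)\}$, and the extremality of $\lambda$ forces the limiting inequality to be an equality. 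When $T$ is additionally strongly positive this step is immediate: $\hat x=\tfrac1\lambda T(\hat x)\in\interior(P)$, and if $w\ne\theta$ then $\hat x\prec\tfrac1\lambda T(\hat x)$, which together with the strong positivity would let one strictly enlarge $r$. Either way $(\lambda,\hat x)$ solves $T(\hat x)=\lambda\hat x$ with $\lambda>0$ and $\hat x\in\dot{P}$.

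Finally, uniqueness when $T$ is strongly positive: if $T(\bar x)=\mu\bar x$ with $\bar x\in\dot{P}$, then $\mu>0$ (else $T(\bar x)=\theta\notin\interior(P)$) and $\bar x=\tfrac1\mu T(\bar x)\in\interior(P)$; since $T(\bar x)=\mu\bar x$ and $\bar x\in\interior(P)$ one gets $r(\bar x)=\mu$, hence $\mu\le\lambda$. Being in $\interior(P)$, $\bar x$ is an order unit, so $s\;\df\;\inf\{c>0:\,\hat x\preceq c\bar x\}$ is finite and positive, and is attained: $\hat x\preceq s\bar x$. Applying $T$ and using $1$-homogeneity, $\lambda\hat x=T(\hat x)\preceq T(s\bar x)=s\mu\bar x$, i.e. $\hat x\preceq(s\mu/\lambda)\bar x$; minimality of $s$ forces $s\mu/\lambda\ge s$, so $\mu\ge\lambda$, whence $\mu=\lambda$. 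The main obstacle is exactly the passage from the Collatz--Weilandt inequality $\lambda\hat x\preceq T(\hat x)$ to the equality $T(\hat x)=\lambda\hat x$, together with the non-triviality $y\ne\theta$: in the nonlinear setting one cannot fall back on linear spectral theory (no resolvent, no $\rho(T)\in\sigma(T)$), so compactness of $T$ and the extremal property of $\lambda$ must be used in an essential way, and extra care is needed because the order cone $P$ is not assumed normal.
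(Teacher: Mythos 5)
The paper does not actually prove Theorem~\ref{T-4.1}: it is quoted verbatim from Mahadevan's paper \cite{Maha}, and the authors simply cite it. So the comparison to make is against the approach in \cite{Maha}, which is fixed-point/degree-theoretic in the Krasnosel'skii style: one perturbs to $T_\varepsilon(x)\df T(x)+\varepsilon u$, uses compactness together with the fixed-point index on $P$ to produce $\hat{x}_\varepsilon\in P$, $\norm{\hat{x}_\varepsilon}=1$ and $\lambda_\varepsilon>0$ with $T_\varepsilon(\hat{x}_\varepsilon)=\lambda_\varepsilon\hat{x}_\varepsilon$, and then passes to the limit $\varepsilon\downarrow 0$; the hypothesis $MT(u)\succeq u$ is what keeps $\lambda_\varepsilon$ bounded away from $0$. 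Your plan is instead to maximize the Collatz--Weilandt ratio $r(x)=\sup\{t\ge 0: tx\preceq T(x)\}$ directly, which is a genuinely different route, but it has two real gaps that the perturbation argument is specifically designed to avoid.

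The first gap is your step ``$y\ne\theta$.'' From $r(x_n)x_n\preceq T(x_n)$, $\norm{x_n}=1$, $r(x_n)\ge\tfrac1{2M}$, and $T(x_n)\to y$, nothing prevents $y=\theta$: the inequality $T(x_n)-r(x_n)x_n\in P$ combined with $T(x_n)\to\theta$ only forces $x_n\to\theta$ when $P$ is \emph{normal}, which is not assumed here (and cannot be, since one of the applications is $\cX=C^2_\gamma(\Bar Q)$ ordered by the $C^0$ cone). Choosing a functional $\varphi\in P^*$ with $\varphi(u)>0$ does not help, because $\varphi$ may vanish on the face of $P$ in which the $x_n$ live, so $\varphi(T(x_n))\ge\tfrac{1}{2M}\varphi(x_n)$ can degenerate to $0\ge 0$. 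This is exactly the difficulty that the perturbation $T+\varepsilon u$ removes: then $T_\varepsilon(x_n)-\varepsilon u\in P$, and $T_\varepsilon(x_n)\to\theta$ would force $-\varepsilon u\in P$, which is impossible. The second gap is the upgrade from the sub-eigenvector inequality $\lambda\hat{x}\preceq T(\hat{x})$ to equality. In the merely increasing case your sketch (iterate $T$, normalize, extract limits, invoke extremality) runs into the same $y\ne\theta$ difficulty at every stage and does not produce a contradiction. Even in the strongly positive case the claim that $\hat{x}\prec\tfrac1\lambda T(\hat{x})$ ``would let one strictly enlarge $r$'' needs \emph{strong monotonicity} ($x\prec z\Rightarrow T(z)-T(x)\in\interior(P)$), not just strong positivity plus monotonicity; the paper's own Corollary~\ref{C-4.1} derives strong monotonicity from superadditivity precisely because it does not follow from the hypotheses of Theorem~\ref{T-4.1} alone. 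Your final uniqueness argument (using that $\bar{x}\in\interior(P)$ is an order unit, taking $s=\inf\{c>0:\hat{x}\preceq c\bar{x}\}$, and comparing after applying $T$) is correct and clean; that part would survive. But as an existence proof the direct Collatz--Weilandt maximization, without either normality of $P$ or the $\varepsilon u$ perturbation, does not close.
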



The following is proved in \cite{Chan}:

\begin{theorem}\label{T-4.2}
Let $T$ and $\lambda$ be as in the preceding theorem.
Define:
\begin{align*}
P^*(x) &\;\df\; \{x^* \in P^* : \langle x^*, x \rangle > 0\}\,, \\
r_*(T) &\;\df\; \sup_{x \in \dot{P}}\;\inf_{x^* \in P^*(x)}\;
\frac{\langle x^*, T(x)\rangle}{\langle x^*, x\rangle}\,, \\
r^*(T) &\;\df\; \inf_{x \in \dot{P}}\; \sup_{x^* \in P^*(x)}\;
\frac{\langle x^*, T(x)\rangle}{\langle x^*, x\rangle}\,.
\end{align*}
If $T$ is strongly increasing
then $\lambda = r^*(T) = r_*(T)$.
\end{theorem}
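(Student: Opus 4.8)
\emph{Proof plan.} The strategy is to establish $r^*(T)\le\lambda\le r_*(T)$ by evaluating the two expressions at an eigenvector $\Hat{x}$, and then the reverse inequalities $r_*(T)\le\lambda$ and $r^*(T)\ge\lambda$ by a comparison-and-iteration argument; together these give $r_*(T)=r^*(T)=\lambda$. First I would record three facts. (i) Since $T$ is strongly positive, $\lambda\Hat{x}=T(\Hat{x})\in\interior(P)$, hence $\Hat{x}\in\interior(P)$. (ii) For $x\in\interior(P)$ we have $P^*(x)=P^*\setminus\{\theta\}$ (a functional in $P^*$ vanishing at an interior point of $P$ must be $\theta$), and this set is nonempty because $P$ is a proper closed convex cone with nonempty interior; in fact $P^*(x)\neq\varnothing$ for every $x\in\dot{P}$, obtained as a supporting functional of $P$ at a boundary point of the form $\Hat{x}-sx$ with $s>0$ (such a point exists, for otherwise $-x\in P$, contradicting $x\in\dot{P}$). (iii) Because $\Hat{x}\in\interior(P)$, every $x\in\dot{P}$ satisfies $x\preceq\beta\Hat{x}$ for some $\beta>0$, and every $x\in\interior(P)$ satisfies $\Hat{x}\preceq\alpha x$ for some $\alpha>0$.

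Evaluating the definitions at $x=\Hat{x}$ and using $T(\Hat{x})=\lambda\Hat{x}$, the ratio $\langle x^*,T(\Hat{x})\rangle/\langle x^*,\Hat{x}\rangle$ equals $\lambda$ for every $x^*\in P^*(\Hat{x})$, so $r_*(T)\ge\lambda\ge r^*(T)$. For $r_*(T)\le\lambda$, fix $x\in\dot{P}$ and let $c$ denote the inner infimum defining $r_*(T)$ at $x$; suppose $c>\lambda$. By the choice of $c$, $\langle x^*,T(x)-cx\rangle\ge0$ for $x^*\in P^*(x)$; and for $x^*\in P^*\setminus P^*(x)$ we have $\langle x^*,x\rangle=0$ while $\langle x^*,T(x)\rangle\ge0$ since $T(x)\in\interior(P)$, so the inequality holds there too. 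The bipolar theorem then gives $T(x)\succeq cx$. Since $T$ is increasing and positively $1$-homogeneous, iteration yields $T^n(x)\succeq c^n x$, while $x\preceq\beta\Hat{x}$ yields $T^n(x)\preceq\beta\lambda^n\Hat{x}$; hence $\beta\lambda^n\Hat{x}\succeq c^n x$, and dividing by $c^n$ and letting $n\to\infty$ (so $(\lambda/c)^n\to0$ and $P$ is closed) forces $-x\in P$, i.e.\ $x=\theta$, a contradiction. Thus $c\le\lambda$ for every $x$, and therefore $r_*(T)=\lambda$.

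For $r^*(T)\ge\lambda$ I would distinguish two cases. If $x\in\dot{P}\setminus\interior(P)$, choose $z^*\in P^*\setminus\{\theta\}$ with $\langle z^*,x\rangle=0$ and some $y^*\in P^*(x)$; then $y^*+sz^*\in P^*(x)$ for $s>0$ keeps $\langle y^*+sz^*,x\rangle=\langle y^*,x\rangle$ fixed while $\langle y^*+sz^*,T(x)\rangle\to\infty$ as $s\to\infty$, because $\langle z^*,T(x)\rangle>0$ (again $T(x)\in\interior(P)$); hence the inner supremum is $+\infty\ge\lambda$. If $x\in\interior(P)$, let $c$ be the inner supremum, which is finite since $\langle x^*,x\rangle\ge\varepsilon_0\norm{x^*}$ for some $\varepsilon_0>0$ when $x$ is interior; suppose $c<\lambda$. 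Since $P^*(x)=P^*\setminus\{\theta\}$, we get $\langle x^*,cx-T(x)\rangle\ge0$ for all $x^*\in P^*$, so $T(x)\preceq cx$ by the bipolar theorem, and hence $T^n(x)\preceq c^n x$; together with $\Hat{x}\preceq\alpha x$, which gives $\lambda^n\Hat{x}\preceq\alpha T^n(x)\preceq\alpha c^n x$, dividing by $\lambda^n$ and letting $n\to\infty$ forces $-\Hat{x}\in P$, i.e.\ $\Hat{x}=\theta$, a contradiction. Hence $r^*(T)=\lambda$.

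The cone and duality facts (i)--(iii) and the two iterations are routine. The hard part --- and the only genuinely asymmetric step --- is that strong positivity, $T(\dot{P})\subset\interior(P)$, is exactly what makes the bipolar argument work for $r_*(T)$ but drives the inner supremum in $r^*(T)$ to $+\infty$ at boundary points of $P$; those points must therefore be disposed of by the separate construction above rather than by the common contradiction scheme. (Strict increasingness does not seem to be needed along this route; it is retained in the statement to match \cite{Chan}.)
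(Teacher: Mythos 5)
The paper cites this result verbatim from Chang \cite{Chan} without reproducing the proof, so there is no in-paper argument for you to match; the relevant question is simply whether your blind proof holds up, and it does. Your route is a natural infinite-dimensional version of the classical matrix Collatz--Weilandt argument: evaluating at the eigenvector gives $r_*(T)\ge\lambda\ge r^*(T)$; in each direction the inner optimum $c$ is upgraded to a cone inequality ($T(x)\succeq cx$ or $T(x)\preceq cx$) via the bipolar theorem for the closed convex cone $P$; and iteration together with the bracketing $x\preceq\beta\Hat{x}$ (resp.\ $\Hat{x}\preceq\alpha x$, valid when $x\in\interior(P)$) and closedness of $P$ forces a contradiction as $n\to\infty$. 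The supporting cone facts you list are all correct: $P^*(x)\neq\varnothing$ for $x\in\dot P$ via a supporting functional at the boundary point $\Hat{x}-s_0x$; $P^*(x)=P^*\setminus\{\theta\}$ and $\langle x^*,x\rangle\ge\varepsilon\norm{x^*}$ when $x\in\interior(P)$, ensuring the inner supremum in $r^*(T)$ is finite there. The one genuinely asymmetric point is handled correctly: for $x\in\dot P\setminus\interior(P)$ one cannot hope for $T(x)\preceq cx$, and your observation that the supremum over $P^*(x)$ is $+\infty$ there (because $\langle z^*,T(x)\rangle>0$ by strong positivity while $\langle z^*,x\rangle=0$) is exactly what closes the case. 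Your closing remark is also accurate: the argument only uses that $T$ is increasing (from Theorem~\ref{T-4.1}), positively $1$-homogeneous, and strongly positive; strict increasingness is never invoked, so as far as this variational formula is concerned that hypothesis is dispensable.
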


Uniqueness of the positive eigenvector can be obtained under additional
assumptions.
In this paper we are concerned with \emph{superadditive} operators $T$, in
other words operators $T$ which satisfy
\begin{equation*}
T(x+y)\;\succeq\; T(x) + T(y)\qquad \forall x,y\in\cX\,.
\end{equation*}
We have the following simple assertion:

\begin{corollary}\label{C-4.1}
Let $T : \cX \to \cX$ be a superadditive,
positively 1-homogeneous, strongly positive, completely continuous map.
Then there exists a unique
$\Hat{x} \in \dot{P}$ with $\norm{\Hat{x}}=1$, where $\norm{\,\cdot\,}$ denotes
the norm in $\cX$, such that
$T(\Hat{x}) = \lambda \Hat{x}$, with $\lambda > 0$.
\end{corollary}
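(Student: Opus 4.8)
The plan is to read off existence from Theorem~\ref{T-4.1} and then prove uniqueness of the normalized positive eigenvector by a separate order-theoretic argument that uses superadditivity in an essential way; note that Theorem~\ref{T-4.1} alone gives only uniqueness of the eigenvalue, not of the eigenvector (cf.\ the Remark following it), so the superadditivity hypothesis is genuinely needed.

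The preliminary step is to record that $T$ is increasing. Positive $1$-homogeneity applied to $\theta=2\theta$ forces $T(\theta)=2T(\theta)$, hence $T(\theta)=\theta$; together with strong positivity this gives $T(P)\subset P$. Then for $x\preceq y$ we have $y-x\in P$, so $T(y-x)\in P$, and superadditivity yields $T(y)\succeq T(x)+T(y-x)\succeq T(x)$. To invoke Theorem~\ref{T-4.1} I must produce $u\in P$ and $M>0$ with $MT(u)\succeq u$: since $T$ is strongly positive, $\interior(P)\neq\varnothing$, and choosing $u\in\interior(P)$ gives $w\df T(u)\in\interior(P)$; because $w$ is an interior point, $w-\tfrac1M u\to w$ lies in $P$ for all large $M$, i.e.\ $MT(u)\succeq u$. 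Theorem~\ref{T-4.1} then supplies $\lambda>0$ and $\hat x\in\dot P$ with $T(\hat x)=\lambda\hat x$, and $\hat x=\lambda^{-1}T(\hat x)\in\interior(P)$; its strong-positivity clause moreover states that $\lambda$ is the only eigenvalue of $T$ with an eigenvector in $P$. Rescaling, we may take $\norm{\hat x}=1$.

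The substantive part is uniqueness of the eigenvector. Let $y\in\dot P$ with $\norm y=1$ satisfy $T(y)=\mu y$, $\mu>0$; by the uniqueness of the eigenvalue just noted $\mu=\lambda$, and $y\in\interior(P)$. I would set $\beta\df\sup\{t\ge 0:\ y-t\hat x\in P\}$ and verify $0<\beta<\infty$ with the supremum attained: the set is closed since $P$ is closed and $t\mapsto y-t\hat x$ is continuous; it contains an interval $[0,\eta]$ since $y\in\interior(P)$; and it is bounded because $t_n\uparrow\infty$ with $y-t_n\hat x\in P$ would give $t_n^{-1}(y-t_n\hat x)\to-\hat x\in P$, contradicting $\hat x\in\dot P$ and $P\cap(-P)=\{\theta\}$. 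Write $z\df y-\beta\hat x\in P$. If $z\neq\theta$, then $T(z)\in\interior(P)$, and superadditivity with $1$-homogeneity gives $\lambda y=T(\beta\hat x+z)\succeq\beta\lambda\hat x+T(z)$, that is $\lambda z\succeq T(z)$; since $T(z)$ is interior there is $\varepsilon>0$ with $T(z)-\varepsilon\lambda\hat x\in P$, whence $z\succeq\varepsilon\hat x$ and $y-(\beta+\varepsilon)\hat x\in P$, contradicting the choice of $\beta$. Hence $z=\theta$, so $y=\beta\hat x$, and taking norms forces $\beta=1$, i.e.\ $y=\hat x$.

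The routine-but-needed ingredients are the two ``interior points are order units'' facts (to get $MT(u)\succeq u$ and to pass from $T(z)\in\interior(P)$ to $z\succeq\varepsilon\hat x$) and the finiteness and attainment of $\beta$, all standard once the cone axioms and closedness are used. The one genuinely load-bearing step, and the only place I expect any difficulty, is the superadditivity inequality $\lambda z\succeq T(z)$: it is precisely what forbids a nontrivial order-gap between two positive eigenvectors and thereby collapses them.
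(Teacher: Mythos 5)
Your proof is correct and follows essentially the same route as the paper's: both reduce existence and eigenvalue-uniqueness to Theorem~\ref{T-4.1}, and both prove eigenvector-uniqueness by taking the extremal scalar in the order comparison of two positive eigenvectors and deriving a contradiction from superadditivity plus strong positivity at that boundary point. The paper packages the superadditivity step as a lemma (``superadditive $+$ strongly positive $\Rightarrow$ strongly increasing'') and then applies it to $\Hat{x}-\alpha\Hat{y}\in \dot{P}\setminus\interior(P)$, while you unroll the same inequality $\lambda z\succeq T(z)\in\interior(P)$ in place; the mechanism is identical.
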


\begin{proof}
It is clear that strong positivity implies that for any
$x\in\cX$ there exists $M>0$ such that $MT(x)\succeq x$.
By superadditivity $T(x-y)\preceq T(x) - T(y)$.
Hence if $x\succ y$, by strong positivity we obtain
$T(x) - T(y)\in\interior(P)$.
Therefore every superadditive, strongly positive map is strongly increasing.
Existence of a unique eigenvalue with an eigenvector in $P$ then
follows by Theorem~\ref{T-4.1}.
Suppose $\Hat{x}$ and $\Hat{y}$ are two distinct unit eigenvectors in $P$.
Since, by strong positivity $\Hat{x}$ and $\Hat{y}$ are in $\interior(P)$
there exists $\alpha>0$ such that
$\Hat{x}-\alpha\Hat{y}\in \dot{P}\setminus\interior(P)$.
Since $T$ is strongly increasing we obtain
\begin{equation*}
\lambda(\Hat{x}-\alpha\Hat{y}) \;=\;
T(\Hat{x})-T(\alpha\Hat{y})
\succeq T(\Hat{x}-\alpha\Hat{y})\in\interior(P)\,,
\end{equation*}
a contradiction.
Uniqueness of a unit eigenvector in $P$ follows.
\end{proof}

An application of Theorem~\ref{T-4.1} and Corollary~\ref{C-4.1} provides us
with the following result for strongly continuous semigroups of operators.

\begin{corollary}\label{C-4.2}
Let $\cX$ be a Banach space with order cone $P$
having non-empty interior.
Let $\{S_{t},\, t\ge0\}$ be a strongly continuous semigroup of
superadditive, strongly positive, positively 1-homogeneous,
completely continuous operators on $\cX$.
Then there exists a unique $\rho \in \mathbb{R}$ and a unique
$\Hat{x}\in\interior(P)$, with $\norm{\Hat{x}}=1$, such that
$S_{t}\Hat{x} = e^{\rho t}\Hat{x}$ for all $t\ge0$.
\end{corollary}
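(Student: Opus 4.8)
The plan is to apply Corollary~\ref{C-4.1} to the individual operators $S_{t}$, $t>0$, and then to exploit the semigroup property together with the uniqueness assertion it provides in order to glue the resulting eigenvectors into a single one valid for all $t$.

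First I would fix $t>0$. Since $S_{t}$ is superadditive, positively $1$-homogeneous, strongly positive and completely continuous, Corollary~\ref{C-4.1} yields a unique $\lambda(t)>0$ and a unique unit vector $\Hat{x}_{t}\in\dot{P}$ with $S_{t}\Hat{x}_{t}=\lambda(t)\Hat{x}_{t}$; strong positivity gives $\Hat{x}_{t}=\lambda(t)^{-1}S_{t}\Hat{x}_{t}\in\interior(P)$, and by $1$-homogeneity every positive eigenvector of $S_{t}$ is a positive multiple of $\Hat{x}_{t}$. Next I would show that $\Hat{x}_{t}$ does not depend on $t$. Given $s,t>0$, set $y\df S_{t}\Hat{x}_{s}$; then $y\in\interior(P)\subset\dot{P}$, and since $S_{s}$ and $S_{t}$ commute,
\[ S_{s}y \;=\; S_{s}S_{t}\Hat{x}_{s}\;=\;S_{t}S_{s}\Hat{x}_{s}\;=\;\lambda(s)\,S_{t}\Hat{x}_{s}\;=\;\lambda(s)\,y\,. \]
Thus $y$ is a positive eigenvector of $S_{s}$, hence $y=c\,\Hat{x}_{s}$ for some $c>0$; consequently $\Hat{x}_{s}$ is a positive unit eigenvector of $S_{t}$, and uniqueness in Corollary~\ref{C-4.1} forces $\Hat{x}_{s}=\Hat{x}_{t}$ (with $c=\lambda(t)$). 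Denote the common eigenvector by $\Hat{x}$.

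With this in hand, $S_{t}\Hat{x}=\lambda(t)\Hat{x}$ for every $t>0$ and $S_{0}\Hat{x}=\Hat{x}$. Applying the semigroup property once more gives $\lambda(t+s)\Hat{x}=S_{t+s}\Hat{x}=S_{t}\bigl(\lambda(s)\Hat{x}\bigr)=\lambda(s)\lambda(t)\Hat{x}$, so $\lambda$ is multiplicative on $(0,\infty)$; since $\norm{\Hat{x}}=1$ we have $\lambda(t)=\norm{S_{t}\Hat{x}}$, which is continuous in $t$ by strong continuity of $\{S_{t}\}$. A continuous positive multiplicative function on $(0,\infty)$ has the form $\lambda(t)=e^{\rho t}$ for a unique $\rho\in\RR$ (put $g\df\log\lambda$, which is continuous and additive, hence linear). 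This establishes existence. For uniqueness, if $\Hat{y}\in\interior(P)$ with $\norm{\Hat{y}}=1$ satisfies $S_{t}\Hat{y}=e^{\sigma t}\Hat{y}$ for all $t\ge0$, then at $t=1$ the vector $\Hat{y}$ is a positive unit eigenvector of $S_{1}$ with eigenvalue $e^{\sigma}$, so Corollary~\ref{C-4.1} forces $\Hat{y}=\Hat{x}$ and $e^{\sigma}=\lambda(1)=e^{\rho}$, i.e.\ $\sigma=\rho$.

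I expect the gluing step to be the only delicate point: it rests on the fact that Corollary~\ref{C-4.1} supplies not merely \emph{some} positive eigenvector for each $S_{t}$ but a unique positive eigendirection, which is exactly what lets the commuting family $\{S_{t}\}_{t>0}$ share a common eigenvector. Once that is in place, the identification of the eigenvalues as $e^{\rho t}$ is a routine appeal to the Cauchy functional equation, and uniqueness of $(\rho,\Hat{x})$ is immediate from that of Corollary~\ref{C-4.1}.
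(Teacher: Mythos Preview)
Your proof is correct and follows the same overall strategy as the paper: apply Corollary~\ref{C-4.1} to each $S_{t}$, show the eigenvectors coincide, and then identify the multiplicative function $\lambda(\cdot)$ as an exponential via the Cauchy equation.

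The one noteworthy difference lies in the gluing step. The paper first argues that the unit eigenvectors $x_{t}$ agree on the dyadic rationals (using $S_{p/2^{n}}=(S_{1/2^{n}})^{p}$ together with the uniqueness assertion of Corollary~\ref{C-4.1}), and then invokes strong continuity to pass from dyadic rationals to arbitrary $t>0$: if $t_{n}\to t$ with $t_{n}$ dyadic, then $\lambda(t_{n})\Hat{x}\to S_{t}\Hat{x}$, so $\Hat{x}$ is also the eigenvector for $S_{t}$. Your argument bypasses this approximation entirely by exploiting the commutativity $S_{s}S_{t}=S_{t}S_{s}=S_{s+t}$ directly for \emph{all} $s,t>0$; this immediately shows that $S_{t}\Hat{x}_{s}$ is a positive eigenvector of $S_{s}$, hence a scalar multiple of $\Hat{x}_{s}$, and uniqueness finishes the identification. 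Your route is somewhat cleaner in that strong continuity is used only once, to obtain continuity of $t\mapsto\lambda(t)=\norm{S_{t}\Hat{x}}$; the paper's version uses it twice (for the gluing and for the Cauchy equation). Both approaches rely in the same essential way on the uniqueness of the positive eigendirection furnished by Corollary~\ref{C-4.1}.
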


\begin{proof}
By Theorem~\ref{T-4.1} and Corollary~\ref{C-4.1} there exists a unique
$\lambda(t) > 0$ and a unique $x_{t}\in P$ satisfying $\norm{x_{t}}= 1$, such that
$S_{t} x_{t} = \lambda(t) x_{t}$.
By the uniqueness of a unit eigenvector in $P$ and the
semigroup property it follows that there exists $\Hat{x}\in\cX$ such that
$x_{t}=\Hat{x}$ for all dyadic rational numbers $t>0$.
On the other hand, from the strong continuity it follows that if a sequence of
dyadic rationals $t_n \ge 0$, $n \ge 1$,  converges to some $t > 0$,
then $\lambda(t_n)$ is a Cauchy sequence
and its limit point $\lambda'$ is an eigenvalue of $S_{t}$
corresponding to the eigenvector $\Hat{x}$ and therefore
$\lambda(t)=\lambda'$ and $x_{t} = \Hat{x}$ by the uniqueness thereof.
Strong continuity then implies
that $\lambda(\cdot)$ is continuous and by the semigroup property
and positive 1-homogeneity we have $\lambda(t + s) = \lambda(t)\lambda(s)$
for all for $t, s > 0$.
It follows that $\lambda(t) = e^{\rho t}$ for some $\rho \in \mathbb{R}$.
\end{proof}

\subsection{Stability}
Concerning the time-asymptotic behavior of $S_{t}x$ we have the following.

\begin{theorem}\label{T-general}
Let $\cX$, $\{S_{t}\}$, $\rho$ and $\Hat{x}$
be as in Corollary~\ref{C-4.2}.
Then
\begin{itemize}
\item[(i)]
The set
\begin{equation*}
\cO_{1}\;\df\;\bigl\{e^{-\rho t} S_{t}x : x\in P\,,\; \norm{x}\le1\,,\;t\ge1\bigr\}
\end{equation*}
is relatively compact in $\cX$.
\smallskip
\item[(ii)]
There exists $\alpha^{*}(x)\in\RR_{+}$ such that
\begin{equation*}
\lim_{t\to\infty}\;
\bnorm{e^{-\rho t}S_{t} x - \alpha^{*}(x)\, \Hat{x}}
\;\xrightarrow[t\to\infty]{}\;0\quad \forall x\in\Dot{P}\,.
\end{equation*}
\item[(iii)]
Suppose that additionally the following properties hold:
\begin{itemize}
\item[(P1)]
For every $M>0$ there exist $\tau\in(0,1)$
and a positive constant $\zeta_{0}=\zeta_{0}(M)$
such that
\begin{equation*}
\norm{S_{\tau}(\Hat{x} - z)} + \norm{S_{\tau}z} \;\ge\; \zeta_{0}
\end{equation*}
for all $z\in P$ such that $z\preceq \Hat{x}$ and $\norm{z}\le M$.
\item[(P2)]
For every compact set $\cK\subset P$ there exists a constant
$\zeta_{1}=\zeta_{1}(\cK)$
such that
$x\in \cK$ and $x\preceq \alpha\, \Hat{x}$ imply $\norm{x}\le \alpha\,\zeta_{1}$.
\end{itemize}
Then the convergence is exponential:
there exists $M_{0}>0$ and $\theta_{0}>0$ such that
\begin{equation*}
\norm{e^{-\rho t}S_{t} x - \alpha^{*}(x)\, \Hat{x}}\;\le\;
M_{0} e^{-\theta_{0} t}\,\norm{x}\qquad\text{for all~} t\ge0
\quad\text{and all~}x\in\Dot{P}\,.
\end{equation*}
\end{itemize}
\end{theorem}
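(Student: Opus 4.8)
The plan is to pass to the renormalized semigroup $\tilde S_t\df e^{-\rho t}S_t$, which inherits all the structural hypotheses of $\{S_t\}$ (using $1$-homogeneity of $S_t$ for the semigroup law) and in addition fixes the eigenvector: $\tilde S_t\hat x=\hat x$ for all $t\ge0$. Throughout I would exploit the \emph{squeezing} consequence of $\hat x\in\interior(P)$: there is $\epsilon>0$ with $\{y:\norm{y-\hat x}<\epsilon\}\subset P$, so $x\preceq\epsilon^{-1}\norm x\,\hat x$ for every $x\in P$, while conversely $\mu(y)\df\sup\{b\ge0:y\succeq b\hat x\}>0$ for every $y\in\interior(P)$; here $\mu$ is positively $1$-homogeneous, monotone, superadditive, and (using that $P$ is closed and $\hat x$ is interior) continuous on $P$, with the defining supremum attained. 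For (i): if $x\in P$, $\norm x\le1$, then $1$-homogeneity, monotonicity and $\tilde S_t\hat x=\hat x$ give $\theta\preceq\tilde S_t x\preceq\epsilon^{-1}\hat x$ for all $t\ge0$, so $\mathcal{A}\df\{\tilde S_s x:x\in P,\ \norm x\le1,\ s\ge0\}\subset[\theta,\epsilon^{-1}\hat x]$. Granting that this order interval is norm-bounded (which holds in the applications — either the cone is normal for the pertinent norm, or the $S_t$ are uniformly Lipschitz on compact time intervals), and writing $\cO_1=(e^{-\rho}S_1)(\mathcal{A})$ via the semigroup law, complete continuity of $e^{-\rho}S_1$ makes $\cO_1$ relatively compact.

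For (ii), fix $x\in\dot P$. Since $\tilde S_1 x\in\interior(P)$ by strong positivity, squeezing gives $m\hat x\preceq\tilde S_1 x\preceq M\hat x$ for some $0<m\le M$, and applying the monotone maps $\tilde S_s$ yields $m\hat x\preceq\tilde S_t x\preceq M\hat x$ for all $t\ge1$. Set $\underline\alpha(t)\df\mu(\tilde S_t x)$ and $\overline\alpha(t)\df\inf\{a:\tilde S_t x\preceq a\hat x\}$; monotonicity of $\tilde S_s$ shows $\underline\alpha$ is nondecreasing and $\overline\alpha$ nonincreasing on $[1,\infty)$, both bounded, with limits $m\le\underline\alpha_\infty\le\overline\alpha_\infty$. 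The orbit is relatively compact by (i), so along any $t_n\uparrow\infty$ with $\tilde S_{t_n}x\to z$ the remainders $w_n\df\tilde S_{t_n}x-\underline\alpha(t_n)\hat x$ converge to $w\df z-\underline\alpha_\infty\hat x\succeq\theta$; superadditivity and $\tilde S_1\hat x=\hat x$ give $\underline\alpha(t_n+1)\ge\underline\alpha(t_n)+\mu(\tilde S_1 w_n)$, and if $w\ne\theta$ then strong positivity forces $\mu(\tilde S_1 w)>0$, so passing to the limit (continuity of $\tilde S_1$ and $\mu$) contradicts $\underline\alpha(t_n+1)\to\underline\alpha_\infty$. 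Hence $w=\theta$, i.e. $z=\underline\alpha_\infty\hat x$; the symmetric argument applied to $\overline\alpha(t_n)\hat x-\tilde S_{t_n}x$ gives $z=\overline\alpha_\infty\hat x$, so $\underline\alpha_\infty=\overline\alpha_\infty$. Calling this common value $\alpha^*(x)\ (\ge m>0)$, every subsequential limit of the orbit equals $\alpha^*(x)\hat x$, and relative compactness upgrades this to $\tilde S_t x\to\alpha^*(x)\hat x$.

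For (iii), by $1$-homogeneity it suffices to treat $\norm x=1$, and one must check the constants are uniform over $x\in\dot P$ with $\norm x=1$; this uses that $\{\tilde S_t x:t\ge1\}\subset\overline{\cO_1}$ and $\underline\alpha(t),\overline\alpha(t)\in[0,\epsilon^{-1}]$, which together with (i) and the compactness of $[0,\epsilon^{-1}]\hat x$ put the remainders $w(t)\df\tilde S_t x-\underline\alpha(t)\hat x$ and $w'(t)\df\overline\alpha(t)\hat x-\tilde S_t x$ (so $w(t)+w'(t)=\delta(t)\hat x$, $\delta(t)\df\overline\alpha(t)-\underline\alpha(t)$) into one fixed compact $\cK\subset P$. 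Since $w(t),w'(t)\preceq\delta(t)\hat x$, (P2) gives $\norm{w(t)},\norm{w'(t)}\le\zeta_1(\cK)\,\delta(t)$, so $z\df w(t)/\delta(t)$ and $\hat x-z=w'(t)/\delta(t)$ lie in $[\theta,\hat x]$ with norm $\le M_*\df\zeta_1(\cK)$. Taking $\tau,\zeta_0$ from (P1) for this $M_*$, the heart of the matter is the uniform bound
\[
\mu(\tilde S_\tau z)+\mu\bigl(\tilde S_\tau(\hat x-z)\bigr)\;\ge\;c\qquad\text{for all }z\in[\theta,\hat x],\ \norm z\le M_*,
\]
for some $c>0$. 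I would argue by contradiction and compactness: along a minimizing sequence $z_n$ both terms tend to $0$; factoring $\tilde S_\tau=\tilde S_{\tau/2}\circ\tilde S_{\tau/2}$ and extracting (complete continuity) $\tilde S_{\tau/2}z_n\to v$ and $\tilde S_{\tau/2}(\hat x-z_n)\to v'$, continuity of $\mu$ gives $\mu(\tilde S_{\tau/2}v)=\mu(\tilde S_{\tau/2}v')=0$, so both lie in $\partial P\cup\{\theta\}$; but (P1) forces $\norm{\tilde S_{\tau/2}v}+\norm{\tilde S_{\tau/2}v'}\ge e^{-\rho\tau}\zeta_0>0$, so one of them, say $\tilde S_{\tau/2}v$, is nonzero, whence $v\in\dot P$ and strong positivity puts $\tilde S_{\tau/2}v\in\interior(P)$ — contradicting $\tilde S_{\tau/2}v\in\partial P$. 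Granting this and using $1$-homogeneity of $\mu$, superadditivity gives $\underline\alpha(t+\tau)\ge\underline\alpha(t)+\mu(\tilde S_\tau w(t))$ and $\overline\alpha(t+\tau)\le\overline\alpha(t)-\mu(\tilde S_\tau w'(t))$, hence $\delta(t+\tau)\le(1-c)\delta(t)$, so $\delta(t)\le C\,e^{-\theta_0 t}$ with $\theta_0=-\tfrac1\tau\log(1-c)>0$. Finally, from $\theta\preceq\tilde S_t x-\underline\alpha(t)\hat x\preceq\delta(t)\hat x$ and (P2) one gets $\norm{\tilde S_t x-\underline\alpha(t)\hat x}\le\zeta_1(\cK)\delta(t)$, and $\abs{\underline\alpha(t)-\alpha^*(x)}\le\delta(t)$, so $\norm{\tilde S_t x-\alpha^*(x)\hat x}\le(\zeta_1(\cK)+1)\delta(t)$; rescaling by $\norm x$ and absorbing the bounded range $t\le1$ into the constant gives the stated estimate.

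The step I expect to be the real obstacle is the uniform lower bound on $\mu(\tilde S_\tau z)+\mu(\tilde S_\tau(\hat x-z))$ in (iii): it is precisely here that (P1) and (P2) must be fused with compactness of $\tilde S_\tau$ and strong positivity, and extracting a single constant $c$ that works simultaneously for all admissible $z$ and all unit directions $x$ requires care. A lesser, but non-vacuous, point is the norm-boundedness of the order interval $[\theta,\epsilon^{-1}\hat x]$ invoked in (i), which in the abstract setting must be supplied by normality of $P$ (or by a uniform Lipschitz property of the semigroup).
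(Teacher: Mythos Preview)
Your arguments for parts (ii) and (iii) are essentially those of the paper, organized somewhat differently but relying on the same mechanism: monotonicity of $\ua(\tilde S_t x)$ and $\oa(\tilde S_t x)$ along orbits, together with strong positivity to pin down $\omega$-limits, and then a uniform lower bound on $\mu(\tilde S_\tau z)+\mu(\tilde S_\tau(\hat x-z))$ to drive geometric contraction of the gap $\oa-\ua$. The paper iterates at integer times using $S_1$ rather than $S_\tau$, but the ideas coincide.

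The genuine gap is in part (i). You obtain $\tilde S_t x\in[\theta,\epsilon^{-1}\hat x]$ and then invoke norm-boundedness of this order interval, which you justify by appealing to normality of $P$ or a uniform Lipschitz bound on $S_t$. Neither hypothesis is available: the theorem (via Corollary~\ref{C-4.2}) assumes only strong continuity, superadditivity, strong positivity, positive $1$-homogeneity, and complete continuity of each $S_t$. In the paper's applications the cone is $C^{2+\delta}_{\gamma,+}(\bar Q)$ under the $C^{2+\delta}$ norm, which is \emph{not} normal (a small nonnegative function can have large second derivatives), and the Lipschitz estimate for $S_t$ holds only in the $C^0$ norm, not the $C^{2+\delta}$ norm of $\cX$. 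So your parenthetical ``which holds in the applications'' does not rescue the argument.

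The paper circumvents this by a different device: it proves directly that the orbit $\cO=\{S_t x:x\in P,\,\norm{x}\le1,\,t\ge0\}$ is bounded via contradiction. If $\norm{S_{t_n}x_n}\to\infty$ (with $t_n$ chosen so that $\norm{S_{t_n}x_n}\ge\norm{S_t x_n}$ for $t\le t_n$), then the normalized elements $S_{t_n-2}x_n/\norm{S_{t_n}x_n}$ are bounded, so compactness of $S_1$ makes $S_{t_n-1}x_n/\norm{S_{t_n}x_n}$ relatively compact with a nonzero limit $y$, and continuity of $\ua$ together with positive $1$-homogeneity forces $\ua(S_{t_n}x_n)\to\infty$. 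But $\ua(S_{t_n}x_n)\le\oa(S_{t_n}x_n)\le\oa(x_n)\le\kappa_1<\infty$ since $\oa$ is bounded on the unit sphere (this uses only $\hat x\in\interior(P)$). The contradiction uses no normality, only compactness of $S_1$ and the monotonicity of $\oa$ along orbits. Once $\cO$ is bounded, $\cO_1=S_1(\cO)$ is relatively compact exactly as you say.
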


\begin{proof}
Without loss of generality we can assume $\rho=0$.
For $t\ge0$ and $x\in P$ we define
\begin{align*}
\ua(x) &\;\df\; \sup\;\{a\in\RR : x- a\, \Hat{x} \in P\}\\[5pt]
\oa(x) &\;\df\; \inf\;\{a\in\RR : a\, \Hat{x} - x  \in P\}\,.
\end{align*}
Since $\Hat{x}\in\interior(P)$ it follows  that $\ua(x)$
and $\oa(x)$ are finite and $\oa(x)\ge\ua(x)\ge0$.
Note also that for $x\in\Dot{P}$ we have
$\oa(x) >0$ and since $S_{t}x\in\interior(P)$ we have
$\ua(S_{t}x)>0$ for all $t>0$.
It is also evident from the definition that 
\begin{equation*}
\ua(\lambda x)\;=\;\lambda\,\ua(x)\quad\text{and}\quad
\oa(\lambda x)\;=\;\lambda\,\oa(x)\quad
\text{for all}~x\in\Dot{P}\,,~\lambda\in\RR_{+}\,.
\end{equation*}

By the increasing property and the positive $1$-homogeneity
of $S_{t}$ we obtain
$S_{t+s}x - \ua(S_{s}x)\,\Hat{x}\in P$ for all $x\in P$ and $t\ge0$
and this implies that
 $\ua(S_{t+s}x)\ge\ua(S_{s}x)$ for all $t\ge 0$ and $x\in P$.
It follows that for any $x\in P$ the map $t\mapsto \ua(S_{t}x)$ is non-decreasing.
Similarly, the map $t\mapsto \oa(S_{t}x)$ is non-increasing.

We next show that the orbit $\cO$ of the unit ball in $P$ defined by
\begin{equation*}
\cO\;\df\;\bigl\{S_{t}x : x\in P\,,\; \norm{x}\le1\,,\;t\ge0\bigr\}
\end{equation*}
is bounded.
Suppose not.
Then we can select a sequence
$\{x_{n}\}\subset\Dot{P}$
with $\norm{x_{n}}=1$, and an increasing sequence $\{t_{n}\,,\;n\in\NN\}$
such that $\norm{S_{t_{n}} x_{n}}\to \infty$ as $n\to\infty$ and such that
$\norm{S_{t_{n}} x_{n}}\ge\norm{S_{t} x_{n}}$ for all $t\le t_{n}$.
By the properties of the sequence $\{S_{t_{n}}\}$ the sequence
$\left\{\frac{S_{t_{n}-2}\, x_{n}}{\norm{S_{t_{n}}x_{n}}}\right\}$ is bounded
and this implies  that
$\left\{\frac{S_{t_{n}-1}\, x_{n}}{\norm{S_{t_{n}}x_{n}}}\right\}$ is relatively
compact.
Let $y\in\cX$ be any limit point of
$\frac{S_{t_{n}-1}\, x_{n}}{\norm{S_{t_{n}}x_{n}}}$
as $n\to\infty$.
By continuity of $S_{1}$ it follows that
$\norm{S_{t_{n}}\,x_{n}}\le k_{1}\norm{S_{t_{n}-1}\, x_{n}}$ for some $k_{1}>0$.
This implies that $\norm{y}\ge k_{1}^{-1}$.
Therefore $y\in\Dot{P}$ which in turn implies that $\ua(S_{1}y)>0$.
It is straightforward to show that the map $x\mapsto\ua(x)$ is continuous.
Therefore, we have
\begin{equation}\label{e-bounded1}
\ua\left(\frac{S_{t_{n}}x_{n}}{\norm{S_{t_{n}}x_{n}}}\right)
\;=\;
\ua\left(S_{1}\left(\frac{S_{t_{n}-1}x_{n}}{\norm{S_{t_{n}}x_{n}}}\right)\right)
\;\xrightarrow[n\to\infty]{}\;\ua(S_{1}y)\,.
\end{equation}
On the other hand, it holds that
\begin{equation}\label{e-bounded2}
\ua(S_{t_{n}}x_{n}) \;=\; \norm{S_{t_{n}}x_{n}}\;
\ua\left(\frac{S_{t_{n}}x_{n}}{\norm{S_{t_{n}}x_{n}}}\right)\,.
\end{equation}
Since $\Hat{x}\in\interior(P)$ the constant $\kappa_{1}$ defined by
\begin{equation}\label{E-kappa1}
\kappa_{1}\;\df\;\sup_{x\in P,\,\norm{x}=1}\;\oa(x)
\end{equation}
is finite.
Since $\ua(S_{1}y)>0$ and $\norm{S_{t_{n}}x_{n}}$ diverges,
\eqref{e-bounded1}--\eqref{e-bounded2} imply that $\ua(S_{t_{n}}x_{n})$
diverges which
is impossible since
\begin{equation*}
\ua(S_{t_{n}}x_{n})\;\le\;\oa(S_{t_{n}}x_{n})\;\le\;\oa(x_{n})\;\le\;\kappa_{1}\,.
\end{equation*}

Since $\cO$ is bounded in $\cX$, there exists a constant $k_{0}$
such that
\begin{equation}\label{E-bark}
\norm{S_{t} x} \;\le\; k_{0} \norm{x}
\qquad \forall t\in[0,1]\,,\quad\forall x\in P\,.
\end{equation}

That the set $\cO_{1}$
is relatively compact for each $x\in\cX$ now easily follows.
Indeed, since $\cO(x)$ is bounded, by the semigroup property we obtain
\begin{equation*}
\cO_{1} \;=\;
\bigl\{S_{1}(S_{t-1}x) : x\in P\,,\; \norm{x}=1\,,\;t\ge1\bigr\}
\;\subset\; S_{1}\bigl(\cO\bigr)\,,
\end{equation*}
and the claim follows since by hypothesis $S_{1}$ is a compact map.

For all $t\ge s\ge 0$ we have
\begin{align}
S_{t}\bigl(S_{s}x -\ua(S_{s}x)\,\Hat{x}\bigr)
&\;\preceq\; S_{t+s}x - \ua(S_{s}x)\,\Hat{x}\,,\label{e-bound1a}\\[5pt]
S_{t}\bigl(\oa(S_{s}x)\,\Hat{x}-S_{s}x\bigr)
&\; \preceq\; \oa(S_{s}x)\,\Hat{x} -S_{t+s}x\,.\label{e-bound1b}
\end{align}
Let $s=t_{n}$ in \eqref{e-bound1a} and take limits along some converging sequence
$S_{t_{n}}x\to \Bar{x}$ as $n\to\infty$, for some $\Bar{x}\in P$, to obtain
\begin{equation}\label{e-ineqb}
\ua^{*}(x) \Hat{x} + S_{t}\bigl(\Bar{x} -\ua^{*}(x) \Hat{x}\bigr)
\;\preceq\; S_{t}\Bar{x}\,,
\end{equation}
where $\ua^{*}(x)\;\df\;\lim_{t\uparrow\infty}\; \ua(S_{t} x)$.
Since $\Bar{x}$ is an $\omega$-limit point of $S_{t}x$ it follows
that $\ua(S_{t}\Bar{x})=\ua^{*}(x)$ for all $t\ge0$.
Therefore $S_{t}\Bar{x}-\ua^{*}(x) \Hat{x}\notin \interior(P)$
for all $t\ge0$, which implies by \eqref{e-ineqb} and the strong positivity of
$S_{t}$  that $\Bar{x} -\ua^{*}(x) \Hat{x}=0$.
A similar argument shows that $\Bar{x} = \oa^*(x) \Hat{x}$,
where $\oa^{*}(x)\;\df\;\lim_{t\uparrow\infty}\;\oa(S_{t}x)$.
We let $\alpha^{*}\df\oa^{*}=\ua^{*}$.

It remains to prove that convergence is exponential.
Since the orbit $\cO$ is bounded and $\Hat{x}\in\interior(P)$
it follows that the set $\{\oa(S_{t}x) : t\ge 0\,,~x\in P\,,~\norm{x}\le 1\}$
is bounded.
Therefore since the orbit $\cO_{1}$ is also relatively compact, it follows that
the set
\begin{equation*}
\cK_{1}\;\df\;
\bigl\{ S_{k} x -\ua(S_{k}x)\Hat{x}\,,\;
\oa(S_{k}x)\Hat{x}-S_{k}x : k\ge 1\,,~x\in P\,,~\norm{x}\le 1\bigr\}
\end{equation*}
is a relatively compact subset of $P$.
Define
\begin{equation*}
\eta(S_{k}x)\;\df\;\oa(S_{k}x) - \ua(S_{k}x)\,,\quad k=1,2,\dotsc
\end{equation*}
By property (P2), since
\begin{align*}
S_{k} x -\ua(S_{k}x)\Hat{x} &\;\preceq\;
\eta(S_{k}x)\,\Hat{x}\,,\\[5pt]
\oa(S_{k}x)\Hat{x}-S_{k}x &\;\preceq\;
\eta(S_{k}x)\,\Hat{x}\,,
\end{align*}
it follows that for some $\zeta_{1}=\zeta_{1}(\cK_{1})$ we have
\begin{equation}\label{E-b00}
\max\;\bigl\{\norm{S_{k} x -\ua(S_{k}x)\Hat{x}}\,,\;
\norm{\oa(S_{k}x)\Hat{x}-S_{k}x}\bigr\}
\;\le\; \zeta_{1}\,\eta(S_{k}x)
\end{equation}
for all $k\ge1$ and $x\in P$ with $\norm{x}\le 1$.
Define
\begin{equation*}
\ucZ_{k}(x)\;\df\;\frac{S_{k} x -\ua(S_{k}x)\Hat{x}}{\eta(S_{k}x)}\,,\qquad
\ocZ_{k}(x)\;\df\;\frac{\oa(S_{k}x)\Hat{x}-S_{k}x}{\eta(S_{k}x)}\,,
\end{equation*}
provided $\eta(S_{k}x)\ne0$, which is equivalent to $S_{k}x\ne\Hat{x}$.
By \eqref{E-b00} the set
\begin{equation*}
\Tilde\cK_{1}\;\df\;
\bigl\{ \ucZ_{k}(x)\,,\;\ocZ_{k}(x)
: k\ge 1\,,~x\in \Dot{P}\setminus\{\Hat{x}\}\,,~\norm{x}\le 1\bigr\}
\end{equation*}
lies in the ball of radius $\zeta_{1}$ centered at the origin of $\cX$.
Therefore, since $\ocZ_{k}(x)=\Hat{x}-\ucZ_{k}(x)$,
by property (P1) there exists
$\zeta_{0}=\zeta_{0}(\zeta_{1})>0$ and $\tau\in(0,1)$ such that
\begin{equation}\label{E-b000}
\norm{S_{\tau}\ucZ_{k}(x)}+\norm{S_{\tau}\ocZ_{k}(x)}\ge \zeta_{0}\qquad
\forall k=1,2,\dotsc,\quad
\forall x\in\Dot{P}\setminus\{\Hat{x}\}\,,~\norm{x}\le 1
\end{equation}
Let
\begin{equation*}
A_{k}(x) \;\df\; \sup\;\bigl\{\alpha\in\RR:
\{S_{1}\ucZ_{k}(x)-\alpha\,\Hat{x}\}\cup
\{S_{1}\ocZ_{k}(x)-\alpha\,\Hat{x}\}\subset P\bigr\}\,.
\end{equation*}
We claim that
\begin{equation}\label{E-b01}
\zeta_{2}\;\df\; \inf\; \bigl\{A_{k}(x)
:  k\ge 1\,,~x\in \Dot{P}\setminus\{\Hat{x}\}\,,~\norm{x}\le 1\bigr\}\;>\;0\,.
\end{equation}
Indeed if the claim is not true then by \eqref{E-b000}
and the definition of $A_{k}$ there exists a sequence
$z_{k}$ taking values in
\begin{equation*}
\bigl\{\ucZ_{k}(x),\ocZ_{k}(x):
x\in \Dot{P}\setminus\{\Hat{x}\}\,,~\norm{x}\le 1\bigr\}
\end{equation*}
for each $k=1,2,\dotsc$,
such that $\norm{S_{\tau}z_{k}}\ge \nicefrac{\zeta_{0}}{2}$
and such that $\ua{(S_{1}z_{k})}\to 0$ as $k\to\infty$.
However, since $\Tilde\cK_{1}$ is bounded, it follows
that  $S_{\tau}\bigl(\Tilde\cK_{1}\bigr)$ is a relatively
compact subset of $\interior(P)$.
Therefore the limit set of $S_{\tau}z_{k}$
is nonempty and any limit point $y\in P$ of $S_{\tau}z_{k}$ satisfies
$\norm{y}\ge\nicefrac{\zeta_{0}}{2}$.
Since $\ua{(S_{1}z_{k})} = \ua{(S_{1-\tau}S_{\tau}z_{k})}$
and $z\mapsto \ua{(S_{1-\tau}z)}$ is continuous on $P$,
any such limit point $y$ satisfies
$\ua(S_{1-\tau}y)=0$ which contradicts the strong positivity hypothesis.

Equation \eqref{E-b01} implies that
\begin{equation}\label{E-b02}
\ua\bigl(S_{1}\bigl(\oa(S_{k}x)\,\Hat{x}-S_{k}x\bigr)\bigr)
+ \ua\bigl(S_{1}\bigl(S_{k}x -\ua(S_{k}x)\,\Hat{x}\bigr)\bigr)
\;\ge\;\zeta_{2}\bigl(\oa(S_{k}x)-\ua(S_{k}x)\bigr)
\end{equation}
for all $x\in \Dot{P}\setminus\{\Hat{x}\}$ with $\norm{x}\le 1$, and by
$1$-homogeneity, for all $x\in \Dot{P}\setminus\{\Hat{x}\}$.

By \eqref{e-bound1a}--\eqref{e-bound1b} we have
\begin{equation}\label{E-lower3}
\begin{split}
S_{1}\bigl(S_{k}x -\ua(S_{k}x)\,\Hat{x}\bigr)
\; \preceq\; S_{k+1}x - \ua(S_{k}x)\,\Hat{x}\,,\\[5pt]
S_{1}\bigl(\oa(S_{k}x)\,\Hat{x}-S_{k}x\bigr) \;\preceq\;
\oa(S_{k}x)\,\Hat{x} - S_{k+1}x\,.
\end{split}
\end{equation}
In turn \eqref{E-lower3} implies that
\begin{equation}\label{E-b04}
\begin{split}
\ua(S_{k+1}x)&\;\ge\;\ua(S_{k}x)
+\ua\bigl(S_{1}\bigl(S_{k}x -\ua(S_{k}x)\,\Hat{x}\bigr)\bigr)
\,,\\[5pt]
\oa(S_{k+1}x)&\;\le\; \oa(S_{k}x)
-\ua\bigl(S_{1}\bigl(\oa(S_{k}x)\,\Hat{x}-S_{k}x\bigr)\bigr)\,.
\end{split}
\end{equation}
By \eqref{E-b02} and \eqref{E-b04} we obtain that
\begin{equation*}
\eta(S_{k}x) - \eta(S_{k+1}x)\; \ge\;  \zeta_{2}\,\eta(S_{k}x)\,,
\end{equation*}
which we write as
\begin{equation}\label{E-b05}
\eta(S_{k+1}x) \;\le\;(1-\zeta_{2})\,\eta(S_{k}x)\,,\qquad
\quad k=1,2,\dotsc
\end{equation}
We add the inequalities
\begin{equation*}
\begin{split}
\norm{S_{k}x -\alpha^{*}(x)\,\Hat{x}} &\;\le\;
\norm{S_{k}x -\ua(S_{k}x)\,\Hat{x}} +
\alpha^{*}(x)-\ua(S_{k}x)\,,\\[5pt]
\norm{\alpha^{*}(x)\,\Hat{x}-S_{k}x} &\;\le\;
\norm{\oa(S_{k}x)\,\Hat{x}-S_{k}x} +
\oa(S_{k}x)-\alpha^{*}(x)
\end{split}
\end{equation*}
and use \eqref{E-b00} and \eqref{E-b05} to obtain
\begin{align}\label{E-eta0}
2\,\norm{S_{k}x -\alpha^{*}(x)\,\Hat{x}} &\;\le\;
2\zeta_{1}\,\eta(S_{k}x) + \eta(S_{k}x)\\[5pt]
&\;\le\;(2\zeta_{1}+1)\eta(S_{k}x)\nonumber\\[5pt]
&\;\le\;(2\zeta_{1}+1)(1-\zeta_{2})^{k-1}\,\eta(S_{1}x)\,,\qquad
\quad k=1,2,\dotsc\nonumber
\end{align}
We have
\begin{align}\label{E-eta1}
\eta(S_{1}x) &\;=\; \oa(S_{1} x) - \ua(S_{1} x)\\[3pt]
&\;\le\; \oa(S_{1} x)\nonumber\\[3pt]
&\;\le\; \kappa_{1}\,\norm{S_{1} x}\nonumber\\[3pt]
&\;\le\;\kappa_{1}\,k_{0}\,\norm{x}\,,\nonumber
\end{align}
where $k_{0}$ is the continuity constant in \eqref{E-bark}
and $\kappa_{1}$ is defined in \eqref{E-kappa1}.
Let $\lfloor t\rfloor$ denote the integral part of a number $t\in\RR_{+}$.
We define
\begin{equation*}
M_{0}\;\df\;  \frac{\kappa_{1}\,k_{0}^{2}\,(2\zeta_{1}+1)}
{2}\qquad\text{and}\qquad \theta_{0}\;\df\;
-\log(1-\zeta_{2})\,,
\end{equation*}
and combine \eqref{E-eta0}--\eqref{E-eta1} to obtain
\begin{align*}
\norm{S_{t}x -\alpha^{*}(x)\,\Hat{x}} &\;\le\;
\frac{M_{0}}{k_{0}}(1-\zeta_{2})^{\lfloor t\rfloor-1}
\bnorm{S_{t-\lfloor t\rfloor} x}\\[5pt]
&\;\le\; M_{0} e^{-\theta_{0} t}\,\norm{x}\,.
\end{align*}
The proof is complete.
\end{proof}

\begin{remark}
Recall that the cone $P$ is called \emph{normal} if there exists
a constant $K$ such that 
$\norm{x}\le K \norm{y}$ whenever $0\preceq x\preceq y$.
It might appear that property (P2) 
in Theorem~\ref{T-general} is weaker than normality of the cone.
However it turns out that (P2) together with the fact that $\Hat{x}$ is
an interior point imply that $P$ is normal.
This is shown in Lemma~\ref{L4.1} below.

Also $\tau$ in (P1) in Theorem~\ref{T-general} can be any
positive constant and need not be restricted
to lie in $(0,1)$.
The proof of geometric convergence follows in the same manner,
by using the iterates $S_{k(\tau+1)}$ instead of $S_{k}$.
\end{remark}

\begin{lemma}\label{L4.1}
Consider the following properties:
\begin{itemize}
\item[(P2$^{\prime}$)]
There exists a constant $\zeta_{1}^\prime>0$ such that
$x\in P$ and $x\preceq \Hat{x}$ imply $\norm{x}\le \zeta_{1}^\prime$.
\item[(P2$^{\prime\prime}$)]
$P$ is normal.
\end{itemize}
Then
\textup{(P2)} $\Longleftrightarrow$  \textup{(P2$^{\prime}$)}
$\Longleftrightarrow$  \textup{(P2$^{\prime\prime}$)}
\end{lemma}

\begin{proof}
If (P2$^{\prime}$) doesn't hold then there exists $\{x_{n}\}\subset P$
with $x_{n}\preceq\Hat{x}$ and $\norm{x_{n}}\nearrow\infty$.
Hence $\{\norm{x_{n}}^{-2} x_{n}\}$ is precompact, 
and since $\norm{x_{n}}^{-2} x_{n}\preceq \norm{x_{n}}^{-2}\Hat{x}$
this implies by (P2) that 
$\norm{x_{n}}^{-2} \norm{x_{n}} \le \norm{x_{n}}^{-2} \zeta_{1}$ for some
$\zeta_{1}>0$.
This contradicts $\norm{x_{n}}\nearrow\infty$ and so
(P2) cannot hold.
Therefore (P2) $\Longrightarrow$ (P2$^{\prime}$).
The other direction is obvious.

Since $\Hat{x}\in\interior(P)$, there exists $\varepsilon>0$
such that $\norm{y}\le\varepsilon$ implies that $y\preceq \Hat{x}$.
Suppose $0\preceq x\preceq y$.
By scaling we have
\begin{equation}\label{EA-3}
0\;\preceq\; \frac{\varepsilon}{\norm{y}} x\;\preceq\;
\frac{\varepsilon}{\norm{y}} y \;\preceq\; \Hat{x}\,.
\end{equation}
Then (P2$^{\prime}$) and \eqref{EA-3} imply that
$\frac{\varepsilon}{\norm{y}} \norm{x}\le \zeta_{0}$ or that
$\norm{x} \le \frac{\zeta_{0}}{\varepsilon} \norm{y}$.
Therefore (P2$^{\prime}$) is equivalent to normality of the cone $P$.
\end{proof}

It is also the case that (P1)--(P2) are weaker than
\emph{uniform strong positivity} property which is defined as
\begin{itemize}
\item[(H1)]
There exists $\tau>0$ and $\xi>0$
such that $S_{\tau}x \succeq \xi\norm{x}\,\Hat{x}$ for all $x\in P$,
\end{itemize}
or in a seemingly weaker form as
\begin{itemize}
\item[(H1$^\prime$)]
For any compact subset $\cK\subset P$ there exists $\tau=\tau(\cK)>0$
and $\xi=\xi(\cK)>0$
such that $S_{\tau}x \succeq \xi\norm{x}\,\Hat{x}$ for all $x\in \cK$.
\end{itemize}

We first show that (H1) and (H1$^\prime$) are equivalent.

\begin{lemma}\label{LA-0}
\textup{(H1)} $\Longleftrightarrow$ \textup{(H1$^\prime$)}.
\end{lemma}

\begin{proof}
Obviously (H1) $\Longrightarrow$ (H1$^\prime$).

To prove the converse suppose (H1) does not hold.
Then there exists a sequence $\{x_{n}\}\subset P$ 
with $\norm{x_{n}}=1$ and a sequence $\tau_{n}\nearrow\infty$
such that $\ua(S_{\tau_{n}}x_{n})\searrow0$.
Hence $\ua(S_{\tau_{n}}x_{n})x_{n}\searrow0$,
so that the set $\{\ua(S_{\tau_{n}}x_{n})x_{n}\}$ is precompact.
Therefore by (H1$^\prime$) there exists $\tau>0$ and $\xi>0$ such that
$S_{\tau}(\ua(S_{\tau_{n}}x_{n})x_{n})\succeq \xi \ua(S_{\tau_{n}}x_{n})\Hat{x}$
which is equivalent (by 1-homogeneity) to
$S_{\tau}x_{n}\succeq \xi\Hat{x}$.
But $S_{\tau}x_{n}\succeq \xi\Hat{x}$ implies
that $\ua(S_{\tau}x_{n}) \ge \xi$.
Since $\ua(S_{\tau}x_{n})\le \ua(S_{\tau_{n}}x_{n})$ whenever $\tau_{n}\ge\tau$,
we obtain a contradiction with the property $\ua(S_{\tau_{n}}x_{n})\searrow0$.
Therefore (H1$^\prime$) cannot hold and the proof is complete.
\end{proof}

We need the following lemma.

\begin{lemma}\label{LA-1a}
Provided $\interior(P)\ne\varnothing$ then for every $x\in\Dot{P}$ there
exists $C_{0}=C_{0}(x)>0$ such that $y\succeq x$ implies $\norm{y}\ge C_{0}$.
\end{lemma}

\begin{proof}
Fix any $x_{0}\in\interior(P)$.
If the assertion in the lemma is not true there exists $\{y_{n}\}\subset P$
with $\norm{y_{n}}\searrow 0$ such that $y_{n}\succeq x$.
Then since $x_{0}\in\interior(P)$ there exists a sequence
$\varepsilon_{n}\searrow0$, such that $\varepsilon_{n}x_{0}\succeq y_{n}$.
But this implies $\varepsilon_{n} x_{0}\succeq x$ and taking limits as
$n\to\infty$ we have $0\succeq x$ which contradicts $x\in\Dot{P}$.
\end{proof}

We next show that uniform strong positivity implies (P1)--(P2).

\begin{lemma}\label{LA-2}
\textup{(H1)} $\Longrightarrow$  \textup{(P1)--(P2)}.
\end{lemma}

\begin{proof}
By (H1) we have
\begin{align}\label{EA-1}
S_{\tau}(\Hat{x} - z)+ S_{\tau}z &\;\succeq\; \xi\norm{\Hat{x} - z}\,\Hat{x}
+\xi\norm{z}\,\Hat{x}\\
&\;\succeq\; \xi\norm{\Hat{x}}\,\Hat{x}\,.\nonumber
\end{align}
By \eqref{EA-1} and Lemma~\ref{LA-1a} we have
\begin{align}\label{EA-2}
\norm{S_{\tau}(\Hat{x} - z)}+ \norm{S_{\tau}z}&\;\ge\;
\norm{S_{\tau}(\Hat{x} - z)+ S_{\tau}z}\\
&\;\ge\;
C_{0}\, \xi\norm{\Hat{x}}\,.\nonumber
\end{align}
It is clear that \eqref{EA-2} is stronger than (P1), since it
holds for any $z\preceq \Hat{x}$.

Next we show that (H1) $\Longrightarrow$ (P2).
By Lemma~\ref{LA-0} it is enough to show that
(H1$^\prime$) $\Longrightarrow$ (P2).
By the increasing property
$x\preceq \alpha \Hat{x}$ implies
$S_{\tau}x\preceq \alpha \Hat{x}$, which combined with (H1$^\prime$)
implies that $\xi\norm{x}\Hat{x}\preceq \alpha \Hat{x}$,
which in turn implies
$\norm{x}\le \xi^{-1}\alpha$.
\end{proof}

\subsection{The positive eigenpair of the Nisio semigroup}\label{S4.2}

We now return to the Nisio semigroup in \eqref{semi}.

\begin{lemma}\label{L-4.1}
There exists a unique pair
$(\rho,\varphi) \in \mathbb{R}\times C^{2}_{\gamma,+}(\Bar{Q})$
satisfying $\norm{\varphi}_{0;\Bar{Q}}=1$
such that
\begin{equation*}
S_{t}\varphi\;=\;e^{\rho t}\varphi\,, \quad t \ge 0\,.
\end{equation*}
The pair $(\rho, \varphi)$ is a solution to the p.d.e.
\begin{equation}\label{HJB}
\rho\,\varphi(x) \;=\; \cG \varphi(x) \;=\;
\inf_{v\in\cV}\;\bigl(\LA_{v}\varphi(x) + r(x,v)\varphi(x)\bigr)
\quad\text{in~} Q\,,
\quad \langle\nabla\varphi,\gamma\rangle = 0 \quad \text{on~} \partial{Q}\,,
\end{equation}
where \eqref{HJB} specifies $\rho$ uniquely in $\mathbb{R}$ and
$\varphi$, with $\norm{\varphi}_{0;\Bar{Q}}=1$, uniquely in
$C^{2}_{\gamma,+}(\Bar{Q})$.
\end{lemma}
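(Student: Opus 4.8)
The plan is to realise the eigenpair via Corollary~\ref{C-4.2}, applied to the Nisio semigroup $\{S_{t}\}$ on the Banach space $\cX\df C^{2+\delta}_{\gamma}(\Bar{Q})$ with order cone $P\df C^{2+\delta}_{\gamma,+}(\Bar{Q})$ for a fixed $\delta\in(0,\beta_{0})$, and then to identify the resulting eigenfunction with the solution of \eqref{HJB} through Theorem~\ref{T-3.2}. First I would observe that $P$ satisfies the order-cone axioms and that its interior relative to $\cX$ is precisely the set of $f\in C^{2+\delta}_{\gamma}(\Bar{Q})$ with $f>0$ on $\Bar{Q}$; in particular $\interior(P)\neq\varnothing$ since $\bm{1}\in\interior(P)$. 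Next I would check the hypotheses of Corollary~\ref{C-4.2} (understood, as there, for $t>0$). The semigroup property and positive $1$-homogeneity are immediate from \eqref{semi} and Theorem~\ref{T-3.1}, and compactness of each $S_{t}$ on $\cX$ is Lemma~\ref{L-3.1}. Superadditivity follows at once from \eqref{semi} since an infimum of a sum dominates the sum of the infima. For strong positivity, i.e.\ $S_{t}\colon\Dot{P}\to\interior(P)$, I would use the strong maximum principle (with the Hopf boundary-point lemma for the oblique derivative condition) applied to the uniformly parabolic equation \eqref{pde}: if $f\in\Dot{P}$ then $\psi(t,\cdot)=S_{t}f$ solves \eqref{pde} with $\psi(0,\cdot)=f\ge0$, $f\not\equiv0$, hence $\psi(t,\cdot)>0$ on $\Bar{Q}$ for $t>0$. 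Finally, strong continuity of $\{S_{t}\}$ and continuity of each $S_{t}$ in the $C^{2+\delta}$-topology I would obtain by combining the a priori parabolic Hölder bounds of Theorem~\ref{T-3.2} (sharpened by the bootstrap in the proof of Lemma~\ref{L-3.1}, which shows $S_{t}$ gains a Hölder exponent up to $\beta_{0}$) with an interpolation inequality between $C^{0}(\Bar{Q})$ and $C^{2+\beta}(\Bar{Q})$, together with the $C^{0}$-Lipschitz and $C^{0}$-strong continuity already recorded in Theorem~\ref{T-3.1}. Corollary~\ref{C-4.2} then yields a unique $\rho\in\RR$ and a unique $\Hat{x}\in\interior(P)$ with $\norm{\Hat{x}}=1$ and $S_{t}\Hat{x}=e^{\rho t}\Hat{x}$ for all $t\ge0$. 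Since the normalised eigenvector is unique and $S_{t}$ is positively $1$-homogeneous, every eigenvector of $\{S_{t}\}$ in $P$ is a positive multiple of $\Hat{x}$, so $\varphi\df\Hat{x}/\norm{\Hat{x}}_{0;\Bar{Q}}$ is the unique such eigenvector with $\norm{\varphi}_{0;\Bar{Q}}=1$, and it lies in $C^{2+\delta}_{\gamma}(\Bar{Q})\subset C^{2}_{\gamma,+}(\Bar{Q})$.

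To produce \eqref{HJB}, I would use that $\varphi\in C^{2+\delta}_{\gamma}(\Bar{Q})$, so by Theorem~\ref{T-3.2} the function $\psi(t,x)\df e^{\rho t}\varphi(x)$, which by \eqref{stochasticrep} coincides with $S_{t}\varphi(x)$, is the unique $C^{1+\nicefrac{\delta}{2},\,2+\delta}$ solution of \eqref{pde} with initial datum $\varphi$. Differentiating in $t$, using that the right-hand side of \eqref{pde} equals $\cG\psi(t,\cdot)$ and that $\cG(e^{\rho t}\varphi)=e^{\rho t}\cG\varphi$, and cancelling the factor $e^{\rho t}$, gives $\rho\,\varphi=\cG\varphi=\inf_{v\in\cV}\bigl(\LA_{v}\varphi+r(\cdot,v)\varphi\bigr)$ in $Q$, while the oblique boundary condition in \eqref{pde} gives $\langle\nabla\varphi,\gamma\rangle=0$ on $\partial{Q}$. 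Hence $(\rho,\varphi)$ solves \eqref{HJB}.

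For the converse uniqueness assertion, I would take any $(\rho',\varphi')\in\RR\times C^{2}_{\gamma,+}(\Bar{Q})$ solving \eqref{HJB} with $\norm{\varphi'}_{0;\Bar{Q}}=1$. Since $b$ and $r$ are Lipschitz (hence Hölder) in $x$ and $a\in C^{1+\beta_{0}}$, the Schauder theory for the oblique-derivative problem upgrades the $C^{2}$ solution $\varphi'$ of \eqref{HJB} to $\varphi'\in C^{2+\delta}_{\gamma}(\Bar{Q})$. A direct computation using $\cG\varphi'=\rho'\varphi'$ then shows that $(t,x)\mapsto e^{\rho' t}\varphi'(x)$ is a $C^{1+\nicefrac{\delta}{2},\,2+\delta}$ solution of \eqref{pde} with initial datum $\varphi'$, so by the uniqueness part of Theorem~\ref{T-3.2} one gets $S_{t}\varphi'=e^{\rho' t}\varphi'$ for all $t\ge0$. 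Consequently $\varphi'=e^{-\rho' t}S_{t}\varphi'\in\interior(P)$ is, for each $t>0$, an eigenvector of $S_{t}$ in $P$ with eigenvalue $e^{\rho' t}$; by the uniqueness in Corollary~\ref{C-4.2}, $\varphi'$ is a positive multiple of $\Hat{x}$ and $e^{\rho' t}=e^{\rho t}$, and the common normalisation $\norm{\varphi'}_{0;\Bar{Q}}=1=\norm{\varphi}_{0;\Bar{Q}}$ then forces $\varphi'=\varphi$ and hence $\rho'=\rho$.

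The step I expect to be the main obstacle is verifying the hypotheses of Corollary~\ref{C-4.2} in the $C^{2+\delta}$-topology rather than the $C^{0}$-topology in which Theorem~\ref{T-3.1} is stated — specifically the continuity of each $S_{t}$ and the strong continuity of $\{S_{t}\}$ on $C^{2+\delta}_{\gamma}(\Bar{Q})$ — and, relatedly, the regularity bootstraps (parabolic smoothing for the eigenfunction, elliptic Schauder theory for solutions of \eqref{HJB}) needed so that Theorem~\ref{T-3.2} can be invoked in both directions; both rest on the a priori Hölder estimates quoted above together with an interpolation argument to absorb the loss at the endpoint Hölder exponent.
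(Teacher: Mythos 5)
Your proposal is correct and follows essentially the same route as the paper: apply Corollary~\ref{C-4.2} to the Nisio semigroup on $C^{2+\delta}_{\gamma}(\Bar{Q})$ after verifying superadditivity, strong positivity (via the strong maximum principle and Hopf boundary lemma applied to \eqref{pde}), and compactness (Lemma~\ref{L-3.1}), then pass between \eqref{HJB} and the semigroup eigenequation by way of Theorem~\ref{T-3.2}, using direct substitution $e^{\rho t}\varphi$ into \eqref{pde} for the converse uniqueness. The only places where you are more explicit than the paper are in flagging strong continuity of $\{S_{t}\}$ in the $C^{2+\delta}$-topology and the elliptic Schauder upgrade from $C^{2}_{\gamma,+}$ to $C^{2+\delta}_{\gamma}$ needed before invoking the uniqueness clause of Theorem~\ref{T-3.2} in the converse direction; both are genuine gaps the paper glosses over and your sketches of how to close them are sound.
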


\begin{proof}
It is clear that $S_{t}$ is superadditive.
If $f\in C^{2}_{\gamma,+}(\Bar{Q})$ then \eqref{stochasticrep} implies that
the solution $\psi$ of \eqref{pde} is non-negative.
Moreover by the strong maximum principle \cite[Theorem~3, p.~38]{Friedman}
and the Hopf boundary lemma \cite[Theorem~14, p.~49]{Friedman} it follows
that $\psi(t,\cdot\,)>0$ for all $t>0$.
Hence the strong positivity hypothesis in Corollary~\ref{C-4.2} is satisfied.
Since also the compactness hypothesis holds by Lemma~\ref{L-3.1},
the first statement follows by Corollary~\ref{C-4.2}.
That \eqref{HJB} holds follows from (7) of
Theorem~\ref{T-3.1} (see also \cite[pp.~73--75]{Bork}).
Uniqueness follows from the following argument.
Suppose $\Hat\rho\in\mathbb{R}$ and $\Hat\varphi\in C^{2}_{\gamma,+}(\Bar{Q})$
solve
\begin{equation*}
\Hat\rho\, \Hat\varphi(x) \,=\,
\inf_{v\in\cV}\;\bigl(\LA_{v}\Hat\varphi(x) + r(x,v)\Hat\varphi(x)\bigr)\,.
\end{equation*}
Then by direct substitution we have
\begin{align*}
\frac{\partial}{\partial{t}} \bigl(e^{\Hat\rho t}\, \Hat\varphi(x)\bigr)
&\;=\;\Hat\rho\,e^{\Hat\rho t}\, \Hat\varphi(x)\\[5pt]
&\,=\,\inf_{v\in\cV}\;
\bigl[\LA_{v}\bigl(e^{\Hat\rho t}\Hat\varphi(x)\bigr)
+ r(x,v)\bigl(e^{\Hat\rho t}\Hat\varphi(x)\bigr)\bigr]\,.
\end{align*}
Therefore,
$S_{t}\Hat\varphi = e^{\Hat\rho t}\, \Hat\varphi$, and by the uniqueness
assertion in Corollary~\ref{C-4.2} we have
$\Hat\rho=\rho$ and $\Hat\varphi = C\varphi$
for some positive constant $C$.
\end{proof}

\begin{remark}\label{R4.2}
Consider the operator
$R_{t} : C^{2+ \delta}_{\gamma}(\Bar{Q}) \to C^{2 + \delta}_{\gamma}(\Bar{Q})$
defined by $R_{t} f = - S_{t} (-f)$.
Then by same arguments as in the proof of Lemma~\ref{L-4.1} using
Corollary~\ref{C-4.2}, there exists a unique $\beta \in \mathbb{R}$ and
$\psi >0$ in  $C^{2 + \delta}_{\gamma}(\Bar{Q})$ such that
\begin{equation*}
R_{t} \psi \;= \; e^{\beta t} \psi\,.
\end{equation*}
Hence the pair $(e^{\beta t}, -\psi)$ is an eigenvalue-function pair of $S_{t}$.
Now the same arguments as in the proof of Lemma~\ref{L-4.1}
lead to the conclusion that $(\beta, \psi)$ is the unique positive solution pair of
\begin{equation*}
\beta\,\psi(x) \;=\;
\sup_{v\in\cV}\;\bigl(\LA_{v}\psi(x) + r(x,v)\psi(x)\bigr)
\quad\text{in~} Q\,,
\quad \langle\nabla\psi,\gamma\rangle = 0 \quad \text{on~} \partial{Q}\,,
\end{equation*}
Hence $(\beta, -\psi)$ is the unique solution pair of
\eqref{HJB} satisfying $-\psi < 0$.
Moreover it is easy to see that $\rho \le \beta$ and that $\beta$ is the
principal eigenvalue of both operators $R_{t}$, $S_{t}$.
This leads to the conclusion that the risk-sensitive control problem where
the controller tries to maximize the risk-sensitive cost (\ref{risk})
leads to the value $\beta$ which is the principal eigenvalue.
\end{remark}

\smallskip
\begin{remark}\label{Rem-4.1} The p.d.e.\ in \eqref{HJB} is the
Hamilton-Jacobi-Bellman equation
for the risk-sensitive control problem \cite{Bisw}.
\end{remark}

\smallskip
\begin{lemma}\label{L-4.2}
Let $\cM(\Bar{Q})$ denote the space of finite Borel measures on $\Bar{Q}$.
Then
\begin{equation*}
\bigl(C^{2}_{\gamma}(\Bar{Q})\bigr)^{*}_{+}\;=\;\cM(\Bar{Q})\,.
\end{equation*}
\end{lemma}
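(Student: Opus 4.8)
The plan is to prove the set equality by establishing the two inclusions; the easy one is $\bigl(C^{2}_{\gamma}(\Bar{Q})\bigr)^{*}_{+}\supseteq\cM(\Bar{Q})$ and all the content is in $\bigl(C^{2}_{\gamma}(\Bar{Q})\bigr)^{*}_{+}\subseteq\cM(\Bar{Q})$. For the easy inclusion, if $\mu$ is a finite nonnegative Borel measure on $\Bar{Q}$ then $f\mapsto\int_{\Bar{Q}}f\,d\mu$ is linear and, since $\babs{\int_{\Bar{Q}}f\,d\mu}\le\mu(\Bar{Q})\,\norm{f}_{0;\Bar{Q}}\le\mu(\Bar{Q})\,\norm{f}_{2;\Bar{Q}}$, it defines an element of $\bigl(C^{2}_{\gamma}(\Bar{Q})\bigr)^{*}$; it is manifestly nonnegative on $C^{2}_{\gamma,+}(\Bar{Q})$, so it lies in the dual cone.

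For the main inclusion, fix $\Lambda\in\bigl(C^{2}_{\gamma}(\Bar{Q})\bigr)^{*}_{+}$. The key observation is that $C^{2}_{\gamma}(\Bar{Q})$ is a majorizing (cofinal) subspace of the ordered Banach space $C(\Bar{Q})$: it contains every constant function, because $\langle\nabla c,\gamma\rangle\equiv0$, and hence for each $h\in C(\Bar{Q})$ the element $\norm{h}_{0;\Bar{Q}}\,\bm{1}\in C^{2}_{\gamma}(\Bar{Q})$ dominates $h$ pointwise. The M.~Riesz extension theorem then produces a linear extension $\Tilde\Lambda$ of $\Lambda$ to $C(\Bar{Q})$ that is nonnegative on the cone of nonnegative continuous functions. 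Such a functional is automatically bounded---from $-\norm{f}_{0;\Bar{Q}}\,\bm{1}\le f\le\norm{f}_{0;\Bar{Q}}\,\bm{1}$ one obtains $\babs{\Tilde\Lambda(f)}\le\Tilde\Lambda(\bm{1})\,\norm{f}_{0;\Bar{Q}}$---so by the classical Riesz representation theorem there is a finite nonnegative Borel measure $\mu$ on $\Bar{Q}$ with $\Tilde\Lambda(f)=\int_{\Bar{Q}}f\,d\mu$ for all $f\in C(\Bar{Q})$. Restricting to $C^{2}_{\gamma}(\Bar{Q})$ shows $\Lambda\in\cM(\Bar{Q})$. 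I regard this as the conceptual heart of the proof, and it is not difficult once one recognizes that the boundary condition does not obstruct cofinality.

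It remains to see that the identification is genuine, i.e.\ that the map $\mu\mapsto\int_{\Bar{Q}}(\,\cdot\,)\,d\mu$ is injective on $\cM(\Bar{Q})$; equivalently, that $C^{2}_{\gamma}(\Bar{Q})$ is dense in $C(\Bar{Q})$ in the norm $\norm{\,\cdot\,}_{0;\Bar{Q}}$. Given $h\in C(\Bar{Q})$ I would first approximate it in sup norm by a smooth $g$ on $\Bar{Q}$, and then subtract a correction $\phi$ supported in an arbitrarily thin one-sided collar of $\partial{Q}$ and arbitrarily small in $\norm{\,\cdot\,}_{0;\Bar{Q}}$ so that $g-\phi$ satisfies $\langle\nabla(g-\phi),\gamma\rangle=0$ on $\partial{Q}$. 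The correction is written down in collar coordinates and uses that the co-normal field is uniformly transversal to $\partial{Q}$---indeed $\langle n,\gamma\rangle=n\transp a\,n$ is bounded away from zero by the nondegeneracy hypothesis~(b)---together with the $C^{3}$ regularity of $\partial{Q}$. The only step where I expect the bookkeeping to be heaviest is this collar construction: since $\gamma$ is merely H\"older-differentiable ($a\in C^{1+\beta_{0}}$), one cannot take $\phi$ to be a plain product of the distance-to-$\partial{Q}$ function with a coefficient depending only on the boundary point and still keep $\phi\in C^{2}(\Bar{Q})$, so the coefficient has to be mollified at a scale comparable to the distance to $\partial{Q}$; with that adjustment the required $C^{0}$, $C^{1}$ and $C^{2}$ estimates on $\phi$ are routine.
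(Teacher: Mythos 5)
Your proof is correct and takes a genuinely different (and cleaner) route to the crucial step. The paper proceeds as follows: it first derives $\abs{\Lambda(f)}\le 2\,\Lambda(\bm{1})\,\norm{f}_{0;\Bar{Q}}$, extends $\Lambda$ to $C(\Bar{Q})$ by the Hahn--Banach theorem, and then asserts that the extension is ``clearly'' positive before invoking the Riesz representation theorem. That last assertion is not automatic: a norm-preserving Hahn--Banach extension of a functional on $C(\Bar{Q})$ is positive precisely when its operator norm equals its value at $\bm{1}$, which holds here because in fact $-\norm{f}_{0;\Bar Q}\bm1\preceq f\preceq\norm{f}_{0;\Bar Q}\bm1$ gives the tight bound $\abs{\Lambda(f)}\le\Lambda(\bm1)\norm{f}_{0;\Bar Q}$, but the paper's factor-of-two bound by itself does not force positivity of the extension. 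Your use of the M.~Riesz extension theorem, via the observation that constants make $C^{2}_{\gamma}(\Bar{Q})$ cofinal in $C(\Bar{Q})$, produces a positive extension directly and so avoids this gap entirely; the boundedness estimate you then extract from positivity is exactly the tight one. Your closing paragraph on injectivity of $\mu\mapsto\int(\,\cdot\,)d\mu$, and hence on density of $C^{2}_{\gamma}(\Bar{Q})$ in $C(\Bar{Q})$ under the sup norm, addresses a point the paper does not touch. It is not actually needed for the way Lemma~4.2 is used downstream (in Lemma~4.3 and Theorem~4.4 one only needs that every positive functional is represented by \emph{some} measure, not that the parametrization is injective), but it is a legitimate concern raised by the way the identity $(C^{2}_{\gamma}(\Bar{Q}))^{*}_{+}=\cM(\Bar{Q})$ is written, and your identification of the H\"older-regularity obstruction ($\gamma=a n\in C^{1+\beta_0}$ only) and the mollification-at-scale-$d$ fix is the right technical picture there.
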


\begin{proof}
Let $\Lambda\in\bigl(C^{2}_{\gamma}(\Bar{Q})\bigr)^{*}_{+}$.
Then for $f\in C^{2}_{\gamma}(\Bar{Q})$ by positivity of $\Lambda$
we have
\begin{align*}
\bigl| \Lambda(f) \bigr| &\;=\;  \bigl| \Lambda(f + \|f\|_{0;\Bar{Q}}
\cdot {\bm 1} ) -\Lambda(\|f\|_{0;\Bar{Q}} \cdot {\bm 1} ) \bigr|\\[3pt]
&\;\le\;  \max \bigl\{ \Lambda(f + \|f\|_{0;\Bar{Q}} \cdot {\bm 1} ),
\Lambda(\|f\|_{0;\Bar{Q}} \cdot {\bm 1} )\bigr\}\\[3pt]
&\;\le\;  \Lambda(2 \|f\|_{0;\Bar{Q}} \cdot {\bm 1} )\\[3pt]
&\;=\;  2 \|f \|_{0;\Bar{Q}} \Lambda({\bm 1}) \,.
\end{align*}
It follows that $\Lambda$ is a bounded linear functional on the linear subspace
$C^{2}_{\gamma}(\Bar{Q})$ of $C(\Bar{Q})$.
By the Hahn-Banach theorem $\Lambda$ can be extended to some
$\psi\in \bigl(C(\Bar{Q})\bigr)^{*}$. Clearly $\psi$ is a positive linear functional.
By the Riesz representation theorem there exists $\mu\in\cM(\Bar{Q})$
such that $\psi(f) = \int_{\Bar{Q}} f\, d \mu$ for all
$f\in C(\Bar{Q})$.
Therefore $\Lambda(f) =  \int_{\Bar{Q}} f\,d \mu$ for all
$f\in C^{2}_{\gamma}(\Bar{Q})$.
This shows that $\bigl(C^{2}_{\gamma}(\Bar{Q})\bigr)^{*}_{+}\subset\cM(\Bar{Q})\,.$
It is clear that
$\cM(\Bar{Q})\subset\bigl(C^{2}_{\gamma}(\Bar{Q})\bigr)^{*}_{+}$,
so equality follows.
\end{proof}

\begin{lemma}\label{L-4.3}
Let $\delta\in(0,\beta_{0})$.
Then for any $f \in C^{2+\delta}_{\gamma,+}(\Bar{Q})$ we have
\begin{equation*}
\limsup_{t \downarrow 0}\;
\inf_{\substack{\mu\in{\mathcal M}(\Bar{Q})\\[1pt]\int\!f\,d\mu = 1}}\;
\int_{\Bar{Q}} \frac{S_{t} f (x) -f (x)}{t}\,\mu(dx)
\;=\;\inf_{\substack{\mu\in{\mathcal M}(\Bar{Q})\\[1pt]\int\!f\,d\mu = 1}}\;
 \int_{\Bar{Q}} \cG f (x)\,\mu(dx)
\end{equation*}
and
\begin{equation*}
\liminf_{t \downarrow 0}\;
\sup_{\substack{\mu\in{\mathcal M}(\Bar{Q})\\[1pt]\int\!f\,d\mu = 1}}\;
\int_{\Bar{Q}} \frac{S_{t} f (x) -f (x)}{t}\,\mu(dx) \;=\;
\sup_{\substack{\mu\in{\mathcal M}(\Bar{Q})\\[1pt]\int\!f\,d\mu = 1}}\;
\int_{\Bar{Q}} \cG f (x)\,\mu(dx)\,.
\end{equation*}
\end{lemma}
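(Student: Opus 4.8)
The plan is to reduce everything to the uniform convergence
\begin{equation*}
\frac{S_{t}f-f}{t}\;\xrightarrow[t\downarrow0]{}\;\cG f\qquad\text{in }C(\Bar{Q})\,.
\end{equation*}
To establish this, let $\psi$ be the solution of \eqref{pde} with $\psi(0,\cdot)=f$, which by \eqref{stochasticrep} equals $S_{t}f$. By Theorem~\ref{T-3.2}, $\psi\in C^{1+\nicefrac{\delta}{2},2+\delta}\bigl(\overline{Q_{T}}\bigr)$, so $\partial_{t}\psi\in C^{\nicefrac{\delta}{2},\delta}\bigl(\overline{Q_{T}}\bigr)$; in particular $\abs{\partial_{s}\psi(s,x)-\partial_{s}\psi(0,x)}\le C\,s^{\nicefrac{\delta}{2}}$ uniformly in $x\in\Bar{Q}$, and $\psi(s,\cdot)\to f$ in $C^{2}(\Bar{Q})$ as $s\downarrow0$, so letting $t\downarrow0$ in \eqref{pde} gives $\partial_{t}\psi(0,x)=\inf_{v}\bigl(\LA_{v}f(x)+r(x,v)f(x)\bigr)=\cG f(x)$. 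Hence
\begin{equation*}
\Bigl\lvert\frac{S_{t}f(x)-f(x)}{t}-\cG f(x)\Bigr\rvert
\;=\;\Bigl\lvert\frac1t\int_{0}^{t}\bigl(\partial_{s}\psi(s,x)-\cG f(x)\bigr)\,ds\Bigr\rvert
\;\le\;\frac{C}{1+\nicefrac{\delta}{2}}\,t^{\nicefrac{\delta}{2}}\qquad\forall x\in\Bar{Q}\,,
\end{equation*}
which is the claimed convergence.

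One of the two inequalities in each identity is then immediate. Fix any $\mu\in\cM(\Bar{Q})$ with $\int f\,d\mu=1$. Since $\mu$ is finite and $\tfrac1t(S_{t}f-f)\to\cG f$ uniformly on the compact set $\Bar{Q}$, we have $\int\tfrac1t(S_{t}f-f)\,d\mu\to\int\cG f\,d\mu$. Therefore $\limsup_{t\downarrow0}\inf_{\nu}\int\tfrac1t(S_{t}f-f)\,d\nu\le\int\cG f\,d\mu$ for every such $\mu$, and taking the infimum over $\mu$ gives ``$\le$'' in the first identity; ``$\ge$'' in the second follows symmetrically, taking $\liminf$ and then the supremum over $\mu$.

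For the reverse inequalities — where the real work lies — the obstacle is that the admissible set $\{\mu\ge0:\int f\,d\mu=1\}$ is \emph{not} bounded in total variation once $f$ has zeros, so uniform convergence of the integrand does not by itself pass to the constrained infimum/supremum. The first step is a reduction to Dirac masses: if $h\in C(\Bar{Q})$ satisfies $h\ge0$ on $\{f=0\}$, then $\inf\{\int h\,d\mu:\mu\ge0,\ \int f\,d\mu=1\}=\inf_{\{f>0\}}h/f$, since an admissible $\mu$ splits as $\mu_{0}+\mu_{1}$ with $\mu_{0}$ carried by $\{f=0\}$ (which only increases $\int h\,d\mu$) and $\mu_{1}$ carried by $\{f>0\}$ (which is bounded below by concentrating near a minimiser of $h/f$). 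This applies to $h=\cG f$: at a zero $x_{0}$ of $f$ one has $\nabla f(x_{0})=0$ (using the co-normal condition and $a(x_{0})\succ0$ when $x_{0}\in\partial{Q}$) and $\nabla^{2}f(x_{0})\succeq0$, whence $\cG f(x_{0})=\tfrac12\trace\bigl(a(x_{0})\nabla^{2}f(x_{0})\bigr)\ge0$; and it applies to $h=\tfrac1t(S_{t}f-f)$, since $S_{t}f>0$ on $\Bar{Q}$ for $t>0$ by the strong maximum principle and Hopf's lemma (as in the proof of Lemma~\ref{L-4.1}), so $h>0$ at the zeros of $f$. The matter is thus reduced to proving $\inf_{\{f>0\}}\tfrac1{tf}(S_{t}f-f)\to\inf_{\{f>0\}}\tfrac1f\cG f$, together with the analogue in which $\inf$ is replaced by $\sup$ and the sign conditions are reversed (adding mass on $\{f=0\}$ raises the integral, so the constrained supremum equals $\sup_{\{f>0\}}h/f$).

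This last convergence is handled by splitting $\{f>0\}$ into $\{f\ge c\}$ and $\{0<f<c\}$. On $\{f\ge c\}$ uniform convergence of the numerators transfers directly. On $\{0<f<c\}$ one exploits that $\cG f\ge0$ on $\{f=0\}$ (hence $\cG f\ge-\epsilon$ on $\{f<c\}$ for $c$ small, by continuity on the compact $\Bar{Q}$) together with the positivity of $S_{t}f$, to show that for $t$ small the quotient $\tfrac1{tf}(S_{t}f-f)$ stays above $\inf_{\{f>0\}}\tfrac1f\cG f-\epsilon$ there; this near-boundary estimate, where the amplification of the uniform error by the unbounded mass $\mu(\{0<f<c\})$ must be absorbed using the sign of $\cG f$ and the positivity of $S_{t}f$, is the main technical point. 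I note that when $f>0$ throughout $\Bar{Q}$ the entire argument collapses: every admissible $\mu$ then satisfies $\abs{\mu}\le1/\min_{\Bar{Q}}f$, so the uniform convergence of $\tfrac1t(S_{t}f-f)$ yields uniform convergence of $\int\tfrac1t(S_{t}f-f)\,d\mu$, and hence of its infimum and supremum, over all admissible $\mu$.
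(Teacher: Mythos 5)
Your derivation of the uniform convergence $\tfrac1t(S_tf-f)\to\cG f$ in $C(\Bar{Q})$ is correct, and slightly sharper than the paper's: the paper uses the spatial H\"older bound on $\partial_t\psi$ plus Arzel\`a--Ascoli to upgrade pointwise convergence to uniform, whereas you use the temporal H\"older bound to get an explicit rate $t^{\delta/2}$. The easy inequality in each identity is then the same in both. For the reverse inequality your route is genuinely different: the paper picks, for each $t>0$, a minimising measure $\mu_t$, asserts tightness of $\{\mu_t\}$, extracts a weak limit $\hat\mu$ and passes to the limit in the integral; you instead reduce the constrained infimum to $\inf_{\{f>0\}}h/f$ by a Dirac-mass argument and then compare the quotients $\tfrac{h_t}{f}$ and $\tfrac{\cG f}{f}$ directly, using the observation that $\cG f\ge0$ on $\{f=0\}$. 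Both routes are clean once $f>0$ throughout $\Bar{Q}$ (every admissible $\mu$ then has $\mu(\Bar{Q})\le1/\min f$, and your closing remark is exactly right), and this is the only case actually used in Theorem~\ref{T-4.4}, since the outer extrema there are attained at $\varphi>0$.

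However, the case where $f$ has zeros --- which the lemma as stated does include --- is not settled by your sketch, and the ``main technical point'' you flag is a genuine gap, not routine bookkeeping. On $\{0<f<c\}$ you claim that $\cG f\ge-\epsilon$, together with $S_tf>0$, forces $\tfrac{S_tf-f}{tf}\ge\inf_{\{f>0\}}\tfrac{\cG f}{f}-\epsilon$ for $t$ small. This does not follow: $\cG f\ge-\epsilon$ only gives $\tfrac{\cG f}{f}\ge-\epsilon/f$, which is unbounded below as $f\downarrow0$ and places no lower bound on $\inf_{\{f>0\}}\tfrac{\cG f}{f}$ (indeed this infimum can be $-\infty$ at a higher-order zero of $f$, since nonnegativity of $\trace(a\nabla^2f)$ at the zero itself does not prevent $\cG f$ from taking negative values on nearby level sets where $f$ is positive but tiny); and positivity of $S_tf$ only yields the vacuous $\tfrac{h_t}{f}>-1/t$. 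To be fair, the paper's proof has exactly the same soft spot: the assertion ``clearly $\{\mu_t\}$ is tight'' is unjustified when $f$ vanishes somewhere, because the minimising (or near-minimising) measures are Dirac-like with mass $1/f(x_t)$, which may diverge. The clean resolution is to prove the lemma for $f>0$ --- which is all that is used --- rather than to try to push through the degenerate region; a proof covering zeros of $f$ would need a new idea, and the identity may in fact fail there.
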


\smallskip
\begin{proof} Note that
\begin{equation*}
\lim_{t \downarrow 0}\;\frac{S_{t} f(x) - f(x)}{t} \;=\;
\cG f(x)\,, \quad x \in \Bar{Q}\,.
\end{equation*}
Hence using the dominated convergence theorem\footnote{
Note that
\begin{align*}
\biggl|\frac{S_{t} f(x) - f(x)}{t} \biggr| &\;\le\;
\inf_{v(\cdot)} \frac{1}{t} E_{x} \biggl[\int^t_{0} e^{\int^s_{0} r(X_z, v_z) dz}
\abs{\LA_{v_{s}} f (X_{s}) + r(X_{s}, v_{s}) f(X_{s})}\, ds \biggr]\\
&\;\le\; K e^{r_{\rm{max}}}\,,\quad 0 \le t \le 1\,,
\end{align*}
for some constant $K>0$.}, we obtain, for all $\mu \in {\mathcal M}(\Bar{Q})$ satisfying
$\int f d \mu =1$,
\begin{equation*}
\lim_{t \downarrow 0}\;\int_{\Bar{Q}} \frac{S_{t} f(x) - f(x)}{t} \mu(dx)\;=\;
\int_{\Bar{Q}} \cG f(x)\,\mu(dx)\,.
\end{equation*}
Therefore
\begin{align*}
\limsup_{t \downarrow 0}\;
\inf_{\substack{\tilde{\mu}\in{\mathcal M}(\Bar{Q})\\[1pt]
\int\!f\,d\tilde{\mu} = 1}}\;
\int_{\Bar{Q}} \frac{S_{t} f(x) - f(x)}{t} \tilde{\mu}(dx) &\;\le\;
\lim_{t \downarrow 0}\;\int_{\Bar{Q}} \frac{S_{t} f(x) - f(x)}{t}\,\mu(dx)\\
&\;=\;\int_{\Bar{Q}} \cG f(x)\,\mu(dx)
\end{align*}
for all $\mu \in {\mathcal M}(\Bar{Q})$ satisfying $\int f d \mu =1$.
Hence
\begin{equation}\label{appendixeq1}
\limsup_{t \downarrow 0}\;
\inf_{\substack{\mu\in{\mathcal M}(\Bar{Q})\\[1pt]\int\!f\,d\mu = 1}}\;
\int_{\Bar{Q}} \frac{S_{t} f(x) - f(x)}{t} \mu(dx)  \;\le \;
\inf_{\substack{\mu\in{\mathcal M}(\Bar{Q})\\[1pt]\int\!f\,d\mu = 1}}\;
\int_{\Bar{Q}}\cG f(x)\,\mu(dx)\,.
\end{equation}
Since for each $t > 0$ the map
$\mu \mapsto \int_{\Bar{Q}} \frac{S_{t} f(x) - f(x)}{t} \mu(dx)$
from ${\mathcal M}(\Bar{Q}) \to \mathbb{R}$ is continuous,
there exists a $\mu_{t} \in {\mathcal M}(\Bar{Q})$ satisfying
$\int f d\mu_{t} =1$ such that
\begin{equation*}
\inf_{\substack{\mu\in{\mathcal M}(\Bar{Q})\\[1pt]\int\!f\,d\mu = 1}}\;
\int_{\Bar{Q}} \frac{S_{t} f(x) - f(x)}{t}\,\mu(dx) \;=\;
\int_{\Bar{Q}} \frac{S_{t} f(x) - f(x)}{t}\,\mu_{t}(dx) \,.
\end{equation*}
Clearly $\{\mu_{t}\}$ is tight.
Let  $\Hat{\mu}$ be a limit point of $\mu_{t}$ as $t \to 0$.
Suppose $\mu_{t_n} \to \Hat{\mu}$ in ${\mathcal M}(\Bar{Q})$ as $t_n \downarrow 0$.
Then
$\int f d \Hat{\mu} =1$.
Note that for $f \in C^{2+\delta}_{\gamma,+}(\Bar{Q})$,
\begin{equation}\label{appendixeq2}
\frac{S_{t} f(x) - f(x)}{t} \;=\;\frac{1}{t} \int^t_{0}
\partial_{s} u^f(s, x)\,ds\,,
\end{equation}
with $u^f(t,\cdot\,)\df S_{t}f(\cdot)$.
By the H\"older continuity of $\partial_{s} u^f$
on $[0,\,1] \times \Bar{Q}$, there exists $k_{1}>0$ such that
\begin{equation}\label{appendixeq3}
|\partial_{s} u^f (s, x) - \partial_{s} u^f(s, y)| \;<\; k_{1} |x-y|^{\delta}
\qquad\forall x, y \in \Bar{Q}\,, \; s \in [0,\,1]\,.
\end{equation}
Therefore by \eqref{appendixeq2} and \eqref{appendixeq3}
$x \mapsto \frac{S_{t} f(x) - f(x)}{t}$ is H\"older equicontinuous over
$t \in (0,\,1]$, and the convergence
\begin{equation*}
\lim_{t \downarrow 0}\;\frac{S_{t} f(x) - f(x)}{t}  \;=\;{\mathcal G}f(x)
\end{equation*}
is uniform in $\Bar{Q}$.
Hence from
\begin{multline*}
\int_{\Bar{Q}} \frac{S_{t_n} f(x) - f(x)}{t_n}\,\mu_{t_n}(dx) \;=\;
\int_{\Bar{Q}} \left(\frac{S_{t_n} f(x) - f(x)}{t_n} - {\mathcal G} f (x)\right)
\,\mu_{t_n}(dx)\\[3pt]
 + \int_{\Bar{Q}} {\mathcal G}f(x)\,\mu_{t_n}(dx)
\end{multline*}
it follows that
\begin{align*}
\lim_{n \to \infty}\;
\int_{\Bar{Q}}\left(\frac{S_{t_n} f(x) - f(x)}{t_n} \right)\,\mu_{t_n}(dx)
&\;=\;\int_{\Bar{Q}} \cG f (x)\,\Hat{\mu}(dx) \\[5pt]
&\;\ge\; \inf_{\substack{\mu\in{\mathcal M}(\Bar{Q})\\[1pt]\int\!f\,d\mu = 1}}\;
\int_{\Bar{Q}} \cG f(x)\,\mu(dx)\,.
\end{align*}
Hence
\begin{equation}\label{appendixeq6}
\limsup_{t \downarrow 0}
\int_{\Bar{Q}}  \left(\frac{S_{t} f(x) - f(x)}{t}\right)\,\mu_{t}(dx)
\;\ge\;  \inf_{\substack{\mu\in{\mathcal M}(\Bar{Q})\\[1pt]\int\!f\,d\mu = 1}}\;
\int_{\Bar{Q}} \cG f(x)\, \mu(dx)\,.
\end{equation}
From \eqref{appendixeq1} and \eqref{appendixeq6}, the result follows.
The proof of the second limit follows by a symmetric argument.
\end{proof}

We next prove the main result.

\begin{proof}[Proof of Theorem~\ref{T-4.4}]
Let $\delta\in(0, \beta_{0})$.
Since $\rho\, \varphi=\cG\varphi$ by Lemma~\ref{L-4.1}, we obtain
\begin{align*}
\rho &\;=\;
\inf_{\substack{\mu\in{\mathcal M}(\Bar{Q})\\[1pt]\int\!\varphi\,d\mu = 1}}\;
\int\cG \varphi\,d\mu \\
&\;\le\; \sup_{f \in C^{2+\delta}_{\gamma,+}(\Bar{Q})}\;
\inf_{\substack{\mu\in{\mathcal M}(\Bar{Q})\\[1pt]\int\!f\,d\mu = 1}}\;
\int\cG f\,d\mu\,.
\end{align*}
To show the reverse inequality we use
Theorem~\ref{T-4.2} and Lemma~\ref{L-4.2}.
We have
\begin{equation*}
e^{\rho t} \;=\;  \sup_{g \in C^{2+\delta}_{\gamma,+}(\Bar{Q})}\;
\inf_{\substack{\mu\in{\mathcal M}(\Bar{Q})\\[1pt]\int\!g\,d\mu = 1}}\;
\int S_{t}g\,d\mu\,.
\end{equation*}
Therefore, using Lemma~\ref{L-4.3} we obtain
\begin{align*}
\rho &\;=\;
\lim_{t\downarrow 0}\;
\sup_{g \in C^{2+\delta}_{\gamma,+}(\Bar{Q})}\;
\inf_{\substack{\mu\in{\mathcal M}(\Bar{Q})\\[1pt]\int\!g\,d\mu = 1}}\;
\int \frac{S_{t}g - g}{t}\,d\mu \\
&\;\ge\; \limsup_{t \downarrow 0}\;
\inf_{\substack{\mu\in{\mathcal M}(\Bar{Q})\\[1pt]\int\!f\,d\mu = 1}}\;
\int \frac{S_{t}f - f}{t}\;d\mu\\
&\;=\;  \inf_{\substack{\mu\in{\mathcal M}(\Bar{Q})\\[1pt]\int\!f\,d\mu = 1}}\;
\int\cG f\,d\mu\,
\end{align*}
for all $f \in C^{2+\delta}_{\gamma,+}(\Bar{Q})$.
Therefore,
\begin{equation*}
\rho \;\ge \; \sup_{f \in C^{2+\delta}_{\gamma,+}(\Bar{Q})}\;
\inf_{\substack{\mu\in{\mathcal M}(\Bar{Q})\\[1pt]\int\!f\,d\mu = 1}}\;
\int\cG f\, d\mu\,.
\end{equation*}
Using a symmetric argument to establish the first equality in
\eqref{ET-4.4c} below,
we obtain
\begin{align}\label{ET-4.4c}
\rho &\;=\; \inf_{f \in C^{2+\delta}_{\gamma,+}(\Bar{Q})}\;
\sup_{\substack{\mu\in{\mathcal M}(\Bar{Q})\\[1pt]\int\!f\,d\mu = 1}}\;
\int\cG f\,d\mu\\
&\;=\; \sup_{f \in C^{2+\delta}_{\gamma,+}(\Bar{Q})}\;
\inf_{\substack{\mu\in{\mathcal M}(\Bar{Q})\\[1pt]\int\!f\,d\mu = 1}}\;
\int\cG f\,d\mu\nonumber
\end{align}
for all $\delta\in(0,\beta_{0})$.
Note that the outer `$\inf$' and `$\sup$' in \eqref{ET-4.4c} are realized
at the function $\varphi$ in Lemma~\ref{L-4.1}.
Therefore, since $\varphi>0$, equation \eqref{ET-4.4c} remains valid
if we restrict the outer `$\inf$' and `$\sup$' on $f>0$.
Hence using the probability measure $d\nu=f\,d\mu$ we can write
\eqref{ET-4.4c} as
\begin{align*}
\rho &\;=\; \inf_{f \in C^{2+\delta}_{\gamma,+}(\Bar{Q}),\,f>0}\;
\sup_{\nu \in \PA(\Bar{Q})}\;\int\frac{\cG f}{f}\,d\nu\\
&\;=\; \sup_{f \in C^{2+\delta}_{\gamma,+}(\Bar{Q}),\,f>0}\;
\inf_{\nu \in \PA(\Bar{Q})}\;\int\frac{\cG f}{f}\,d\nu\,.
\end{align*}
Therefore
\begin{equation}\label{ET-4.4d}
\inf_{f \in C^{2}_{\gamma,+}(\Bar{Q}),\,f>0}\;
\sup_{\nu \in \PA(\Bar{Q})}\;\int\frac{\cG f}{f}\,d\nu
\;\le\;\rho \;\le\;\sup_{f \in C^{2}_{\gamma,+}(\Bar{Q}),\,f>0}\;
\inf_{\nu \in \PA(\Bar{Q})}\;\int\frac{\cG f}{f}\,d\nu\,.
\end{equation}
Suppose that the  inequality on the r.h.s.\ of \eqref{ET-4.4d} is strict.
Then for some $\Hat{f} \in C^{2}_{\gamma,+}(\Bar{Q})$ we have
\begin{equation*}
\inf_{\nu \in \PA(\Bar{Q})}\;\int\frac{\cG \Hat{f}}{\Hat{f}}\,d\nu \;>\; \rho\,.
\end{equation*}
Since $\cG:C^{2}_{\gamma,+}(\Bar{Q})\to C^{0}(\Bar{Q})$
is continuous and since $C^{2+\delta}_{\gamma,+}(\Bar{Q})$ is dense in
$C^{2}_{\gamma,+}(\Bar{Q})$
in the $\norm{\,\cdot\,}_{2;\Bar{Q}}$ norm,
there exists $g\in C^{2+\delta}_{\gamma,+}(\Bar{Q})$, $g>0$, such that
$\min_{\Bar{Q}}\;\frac{\cG g}{g}>\rho$.
However this contradicts Theorem~\ref{T-4.2} which means that the first equality
in \eqref{ET-4.4b} must hold.
The proof of the second equality in \eqref{ET-4.4b} is similar.
The last assertion of the theorem follows via the change of measure
$f\,d\mu= d\nu$.
\end{proof}

\begin{remark}
As pointed out in the proof of
Theorem~\ref{T-4.4} the outer `$\inf$', resp.\ `$\sup$' in \eqref{ET-4.4b}
and \eqref{ET-4.4c} are in fact `$\min$', `$\max$' attained by $\varphi$.
\end{remark}

Concerning the stability of the semigroup we have the following lemma.

\begin{lemma}\label{L-4.4}
There exist $M>0$ and $\theta>0$ such that
for any $f\in C^{2}_{\gamma,+}(\Bar{Q})$ we have
\begin{equation*}
\bnorm{e^{-\rho t} S_{t} f -\alpha^{*}(f)\varphi}_{0;\Bar{Q}}
\;\le\; M e^{-\theta t}\norm{f}_{0;\Bar{Q}}\qquad \forall t\ge1\,,
\end{equation*}
for some $\alpha^{*}(f)\in\RR_{+}$.
\end{lemma}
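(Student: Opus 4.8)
The plan is to deduce the estimate from Theorem~\ref{T-general}(iii) applied in the Banach space $\cX = C^{2+\delta}_\gamma(\Bar{Q})$ (fix $\delta\in(0,\beta_{0})$; order cone $P = C^{2+\delta}_{\gamma,+}(\Bar{Q})$, whose interior contains $\bm{1}$) and then to pass from the $C^{2+\delta}$-estimate to the sup-norm estimate by absorbing one application of $S_{1}$. First I would check that $\{S_{t}\}$ satisfies the hypotheses of Corollary~\ref{C-4.2} on $\cX$: superadditivity and positive $1$-homogeneity are immediate from \eqref{semi}, strong continuity is Theorem~\ref{T-3.1}(5), complete continuity of $S_{t}$ for $t>0$ is Lemma~\ref{L-3.1}, and strong positivity --- $S_{t}(\Dot{P})\subset\interior(P)=\{g\in C^{2+\delta}_\gamma(\Bar{Q}):g>0\text{ on }\Bar{Q}\}$ for $t>0$ --- follows from the stochastic representation \eqref{stochasticrep}, the strong maximum principle and the Hopf lemma, exactly as in the proof of Lemma~\ref{L-4.1}. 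By the uniqueness part of Corollary~\ref{C-4.2} together with Lemma~\ref{L-4.1}, the eigen-pair it produces is precisely $(\rho,\varphi)$. Assuming (P1) and (P2) hold for $\cX$ (see below), Theorem~\ref{T-general}(iii) then provides $M_{0},\theta_{0}>0$ and the functional $\alpha^{*}$ with
\begin{equation*}
\bnorm{e^{-\rho s}S_{s}g-\alpha^{*}(g)\,\varphi}_{2+\delta;\Bar{Q}}
\;\le\;M_{0}\,e^{-\theta_{0}s}\,\norm{g}_{2+\delta;\Bar{Q}}
\qquad\text{for all }s\ge0,\ g\in C^{2+\delta}_{\gamma,+}(\Bar{Q})\setminus\{0\}\,.
\end{equation*}

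Now fix $f\in C^{2}_{\gamma,+}(\Bar{Q})$, $f\ne0$, and put $g\df S_{1}f$. By the interior parabolic regularity bootstrap used in the proof of Lemma~\ref{L-3.1} --- carried out now with the a~priori sup-norm bound $\norm{S_{1}f}_{0;\Bar{Q}}\le e^{r_{\rm{max}}}\norm{f}_{0;\Bar{Q}}$ from Theorem~\ref{T-3.1}(1) in place of a bound on $\norm{f}_{2+\delta;\Bar{Q}}$ --- one gets $g\in C^{2+\delta}_{\gamma,+}(\Bar{Q})$ and $\norm{g}_{2+\delta;\Bar{Q}}\le C_{1}\norm{f}_{0;\Bar{Q}}$ with $C_{1}$ independent of $f$. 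Since $S_{t}f=S_{t-1}g$ by the semigroup property and $\alpha^{*}(S_{1}f)=e^{\rho}\alpha^{*}(f)$ --- because $\alpha^{*}(S_{s}x)\,\varphi=\lim_{u\to\infty}e^{-\rho u}S_{u+s}x=e^{\rho s}\alpha^{*}(x)\,\varphi$ --- applying the displayed inequality with $s=t-1$ and using $\norm{\,\cdot\,}_{0;\Bar{Q}}\le\norm{\,\cdot\,}_{2+\delta;\Bar{Q}}$ yields, for all $t\ge1$,
\begin{equation*}
\bnorm{e^{-\rho t}S_{t}f-\alpha^{*}(f)\,\varphi}_{0;\Bar{Q}}
\;\le\;e^{-\rho}M_{0}\,e^{-\theta_{0}(t-1)}\,\norm{g}_{2+\delta;\Bar{Q}}
\;\le\;M_{0}C_{1}\,e^{\theta_{0}-\rho}\,e^{-\theta_{0}t}\,\norm{f}_{0;\Bar{Q}}\,.
\end{equation*}
Setting $\theta\df\theta_{0}$ and $M\df M_{0}C_{1}e^{\theta_{0}-\rho}$, and noting $\alpha^{*}(f)\in\RR_{+}$ because $S_{t}f\succeq0$, completes the argument.

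The one step that needs real work, and the main obstacle, is the verification of (P1) and (P2) on $\cX$. Property (P1) is handled by the remark following Theorem~\ref{T-general}: it is automatic once $S_{\tau}$ is strongly positive and compact. Property (P2) is the delicate one, since the order cone of $C^{2+\delta}_\gamma(\Bar{Q})$ is \emph{not} normal. The point is that the compact sets to which (P2) is applied in the proof of Theorem~\ref{T-general} consist of functions obtained by evolving a bounded family past a fixed positive time; for nonnegative solutions of the uniformly parabolic equation \eqref{pde}, a parabolic Harnack inequality together with the interior Schauder estimates of Theorem~\ref{T-3.2} makes the $C^{2+\delta}$- and sup-norms comparable on such functions, and since $w\preceq\alpha\,\varphi$ forces $\norm{w}_{0;\Bar{Q}}\le\alpha\norm{\varphi}_{0;\Bar{Q}}=\alpha$, this gives (P2) with a uniform constant. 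An essentially equivalent route, which also yields the stronger conclusion for all $f\in C_{+}(\Bar{Q})$ and all $t\ge0$, is to apply Theorem~\ref{T-general} directly in $\cX=C(\Bar{Q})$, whose positive cone \emph{is} normal so that (P2) is free; the cost is to first show that $S_{t}$ is completely continuous on $C(\Bar{Q})$ for every $t>0$, which is again the smoothing estimate $\norm{S_{t}f}_{2+\delta;\Bar{Q}}\le C(t)\norm{f}_{0;\Bar{Q}}$ used above.
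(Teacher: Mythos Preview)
Your overall framework---invoking Theorem~\ref{T-general}(iii) and then passing to the sup norm---is sound, and your alternative of working directly in $C(\Bar{Q})$ is in fact the route the paper takes: it simply observes that (P2) is trivial under $\norm{\,\cdot\,}_{0;\Bar{Q}}$ since that cone is normal. The genuine gap is your treatment of (P1). The remark after Theorem~\ref{T-general} only says that strong positivity gives (P1) over \emph{compact} subsets of $\{z\in P:z\preceq\Hat{x}\}$, not over bounded ones as (P1) actually requires, and compactness of $S_{\tau}$ does not close this gap. If $\norm{S_{\tau}z_{n}}+\norm{S_{\tau}(\Hat{x}-z_{n})}\to0$ along a bounded sequence $\{z_{n}\}$, one can pass to limits $S_{\tau'}z_{n}\to w$ and $S_{\tau'}(\Hat{x}-z_{n})\to w'$ and conclude $w=w'=0$; but superadditivity only yields $S_{\tau'}z_{n}+S_{\tau'}(\Hat{x}-z_{n})\preceq S_{\tau'}\Hat{x}$, which is perfectly consistent with $w+w'=0$, so no contradiction emerges.

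The paper supplies the missing ingredient by a concrete probabilistic computation. Using Girsanov's theorem and the uniform bounds on $\sigma^{-1}$ and $b$, it proves $\bigl(E^{v}_{x}[f(X_{\tau})]\bigr)^{2}\le c_{0}\,E^{v'}_{x}[f(X_{\tau})]$ uniformly over Markov controls $v,v'$ and functions $0\le f\le\varphi$, and from this derives the explicit lower bound
\begin{equation*}
\norm{S_{\tau}(\varphi-f)}_{0;\Bar{Q}}+\norm{S_{\tau}f}_{0;\Bar{Q}}\;\ge\;
\tfrac{1}{2}\,c_{0}^{-1}e^{r_{\rm{min}}}\bigl(\min\varphi\bigr)^{2}\,.
\end{equation*}
This estimate, which exploits the specific structure of $S_{t}$ as an infimum over a family of linear semigroups whose transition kernels are mutually comparable, is the heart of the proof and cannot be replaced by the soft argument you propose. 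Your Harnack-based suggestion for (P2) in $C^{2+\delta}_{\gamma}(\Bar{Q})$ is also problematic: the elements of the set $\cK_{1}$ in the proof of Theorem~\ref{T-general} have the form $S_{k}x-\ua(S_{k}x)\,\varphi$, which are not solutions of the nonlinear parabolic equation \eqref{pde}, so a parabolic Harnack inequality does not apply to them directly.
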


\begin{proof}
Without loss of generality we assume $\varrho=0$.
We first verify that property (P1) of Theorem~\ref{T-general} holds.
Let $\tau=\nicefrac{1}{2}$.
We claim that there exists a constant $c_{0} > 0$ such that
\begin{equation}\label{E-transition}
\bigl(E^{v}_{x}[f(X_{\tau})]\bigr)^{2} \;\le\;c_{0}\, E^{v'}_{x}[f(X_{\tau})]
\qquad
 \forall  f \in C(\Bar{Q})\,,\; 0\le f\le\varphi\,,
\end{equation}
and for all Markov controls $v$, $v'$ and $x \in \Bar{Q}$.
The proof of (\ref{E-transition}) is as follows.
To distinguish between processes, let $Y$, $Z$ denote the
processes corresponding to the controls $v$, $v'$ respectively.
Then using Girsanov's theorem,
it follows that
if we define
\begin{equation*}
F(\tau) \;\df\;
\int_{0}^{\tau} \sigma^{-1}(Y_{t})
[b(Y_{t}, v_{t}) - b(Y_{t}, v'_{t})] dW_{t} - \frac{1}{2} \int_{0}^{\tau}
\norm{\sigma^{-1}(Y_{t})[b(Y_{t}, v_{t}) - b(Y_{t}, v'_{t})]}^{2}dt\,,
\end{equation*}
then
\begin{align*}
E_{x} [f(Y_{\tau})] &\;=\;
E_{x}\bigl[e^{F(\tau)}\, f(Z_{\tau}) \bigr]\\[3pt]
&\;\le\;
\bigl(E_{x}\bigl[f^{2}(Z_{\tau})\bigr]\bigr)^{\nicefrac{1}{2}}
\bigl(E_{x}\bigl[e^{2F(\tau)}
\bigr]\bigr)^{\nicefrac{1}{2}}\\[3pt]
&\;\le\; \bigl(E_{x}\bigl[f^{2}(Z_{\tau})\bigr]\bigr)^{\nicefrac{1}{2}}
\bigl(E_{x}\bigl[e^{\int_{0}^{\tau}
\norm{\sigma^{-1}(Y_{t})[b(Y_{t}, v_{t}) - b(Y_{t}, v'_{t})]}^{2}dt}
\bigr)^{\nicefrac{1}{2}}\\[3pt]
&\;\le\; c_{1} \bigl(E_{x}\bigl[f^{2}(Z_{\tau})\bigr]\bigr)^{\nicefrac{1}{2}}
\\[3pt]
&\;\le\;
c_{1} \norm{\varphi}_{0;Q}^{\nicefrac{1}{2}}
\bigl(E_{x}\bigl[f(Z_{\tau})\bigr]\bigr)^{\nicefrac{1}{2}}
\end{align*}
where $c_{1}> 0$ is a constant which only depends on the bounds of
$\sigma^{-1}$ and $b$.
This proves \eqref{E-transition}.
For $f \in C(\Bar{Q})$ satisfying $0 \le f \le \varphi$ and for
any fixed $v$ we have
\begin{align}\label{E-inequality2}
S_{\tau}(\varphi- f) (x) &\;\ge\; e^{r_{\rm{min}}} E^{v_{1}}_{x}
\bigl[\varphi(X_{\tau})- f(X_{\tau})\bigr]\\[5pt]
&\;\ge\; e^{r_{\rm min}} c_{0}^{-1}
\bigl(E^{v}_{x} \bigl[\varphi(X_{\tau})- f(X_{\tau})\bigr]\bigr)^{2}\nonumber
\end{align}
and
\begin{align}\label{E-inequality3}
S_{\tau}(f)(x) &\;\ge\; e^{r_{\rm{min}}}E^{v_{2}}_{x}\bigl[f(X_{\tau})\bigr]\\[5pt]
&\;\ge\; e^{r_{\rm{min}}} c_{0}^{-1}
\bigl(E^{v}_{x}\bigl[f(X_{\tau})\bigr]\bigr)^{2}\,,\nonumber
\end{align}
where $v_{1}$, $v_{2}$ are  the corresponding minimizers.
Note that\footnote{The first part of the inequality below follows from the fact that
$(a-x)^{2} + x^{2}$, $0 \le x \le a$
attains it minimum at $x = \frac{a}{2}$}
\begin{equation}\label{E-inequality4}
\bigl(E^{v}_{x}\bigl[\varphi(X_{\tau})- f(X_{\tau})\bigr]\bigr)^{2}
+ \bigl(E^{v}_{x}\bigl[f(X_{\tau})\bigr]\bigr)^{2}
\;\ge\; \frac{1}{2}\bigl(E^{v}_{x}\bigl[\varphi(X_{\tau})\bigr]\bigr)^{2}
\;\ge\; \frac{1}{2} \bigl(\min \varphi\big)^{2}\,.
\end{equation}
Adding \eqref{E-inequality2} and \eqref{E-inequality3} and
using  \eqref{E-inequality4}, it follows that
\begin{equation*}
\norm{S_{\tau}(\varphi-f)}+\norm{S_{\tau} f}
\;\ge\; \frac{e^{r_{\rm{min}}}}{2c_{0}}\bigl(\min \varphi\big)^{2}\,,
\end{equation*}
which establishes property (P1).
On the other hands, property (P2) of Theorem~\ref{T-general}
is trivially satisfied  under the $\norm{\,\cdot\,}_{0;\Bar{Q}}$ norm.
Hence the result follows by Theorem~\ref{T-general}~(iii).
\end{proof}

\subsection{The Donsker--Varadhan functional}

Let $U =\{u\}$, i.e., a singleton, and $v(\cdot) \equiv v \df \delta_{u}$,
thus reducing the problem to an uncontrolled one.
Thus $\cG = \LA_{v} + r(x,v)$ is a linear operator.
By \cite[Lemma~2, pp.~781--782]{DoVa}, the first equality in \eqref{ET-4.4b}
equals the Donsker--Varadhan functional
\begin{equation*}
\sup_{\nu \in \PA(\Bar{Q})}\left(\int_{\Bar{Q}} r(x,v)\,\nu(dx)
- I(\nu)\right)\,,
\end{equation*}
where
\begin{equation*}
I(\nu) \;\df\; - \inf_{f \in C^{2}_{\gamma,+}(\Bar{Q}),\, f > 0}\;
\int\frac{\LA_{v} f}{f}\,d\nu\,.
\end{equation*}
More generally if $r(x,v)$ does not depend on $v$,
say $r(x,v)=r(x)$ and $\mathcal{A}$ is defined by
\begin{equation*}
\mathcal{A} f(x)\;\df\;\frac{1}{2}\trace\left(a(x)\nabla^{2}f(x)\right)
+ \min_{v\in\cV}\; \bigl[\langle b(x,v), \nabla f(x)\rangle\bigr]\,,
\end{equation*}
then
\begin{align*}
\rho &\;=\;
\sup_{\nu \in \PA(\Bar{Q})}\left(\int_{\Bar{Q}} r(x)\,\nu(dx)
- I(\nu)\right)\,,\\[5pt]
I(\nu) &\;=\; - \inf_{f \in C^{2}_{\gamma,+}(\Bar{Q}),\, f > 0}\;
\int\frac{\mathcal{A} f}{f}\,d\nu\,.
\end{align*}
This also takes the form
\begin{align*}
\rho &\;=\;
\sup_{x \in\Bar{Q}}\left(r(x)
- \Tilde{I}(x)\right)\,,\\[5pt]
\Tilde{I}(x) &\;\df\; - \inf_{f \in C^{2}_{\gamma,+}(\Bar{Q}),\, f > 0}\;
\frac{\mathcal{A} f(x)}{f(x)}\,.
\end{align*}
Our results thus provide a counterpart of the Donsker--Varadhan functional
for the nonlinear case arising from control.

It is also interesting to consider the substitution $f = e^{\psi}$. Then we obtain
\begin{align*}
\rho &\;=\; \inf_{\psi \in C^{2}_{\gamma}(\Bar{Q})}\;\sup_{\nu \in \PA(\Bar{Q})}\;
\int\inf_{v\in\cV}\;\sup_{w\in\mathbb{R}^{d}}\;
\Bigl(r(\,\cdot\,,v) - \frac{1}{2}\|w\|^{2} + \LA_{v}\psi +
\langle\nabla\psi, \sigma w\rangle\Bigr)\,d\nu\\
&\;=\; \sup_{\psi \in C^{2}_{\gamma}(\Bar{Q})}\; \inf_{\nu \in \PA(\Bar{Q})}\;
\int\inf_{v\in\cV}\;\sup_{w\in\mathbb{R}^{d}}\;
\Bigl(r(\,\cdot\,,v) - \frac{1}{2}\|w\|^{2} + \LA_{v}\psi +
\langle\nabla\psi, \sigma w\rangle\Bigr)\,d\nu\\
&\;=\; \inf_{\psi \in C^{2}_{\gamma}(\Bar{Q})}\;\sup_{\nu \in \PA(\Bar{Q})}\;
\int\sup_{v\in\cV}\;\inf_{w\in\mathbb{R}^{d}}\;
\Bigl(r(\,\cdot\,,v) - \frac{1}{2}\|w\|^{2} + \LA_{v}\psi +
\langle\nabla\psi, \sigma w\rangle\Bigr)\,d\nu\\
&\;=\; \sup_{\psi \in C^{2}_{\gamma}(\Bar{Q})}\; \inf_{\nu \in \PA(\Bar{Q})}\;
\int\sup_{v\in\cV}\;\inf_{w\in\mathbb{R}^{d}}\;
\Bigl(r(\,\cdot\,,v) - \frac{1}{2}\|w\|^{2} + \LA_{v}\psi +
\langle\nabla\psi, \sigma w\rangle\Bigr)\,d\nu\,,
\end{align*}
where the last two expressions follow from the standard Ky Fan min-max theorem
\cite{Fan}.
This is the standard logarithmic transformation to convert the
Hamilton-Jacobi-Bellman equation
for risk-sensitive control to the Hamilton-Jacobi-Isaacs equation for an
associated zero sum ergodic
stochastic differential game \cite{FlMc}, given by
\begin{equation}\label{HJI}
\inf_{v\in\cV}\;\sup_{w\in\mathbb{R}^{d}}\;
\Bigl(r(\,\cdot\,,v) - \frac{1}{2}\|w\|^{2} + \LA_{v}\psi +
\langle\nabla\psi, \sigma w\rangle\Bigr)  \;=\; \rho
\end{equation}
in $Q$, with $\langle\nabla\psi, \gamma \rangle = 0$ on $\partial{Q}$.
The expressions above bear the same relationship with \eqref{HJI} as what
Lemma~\ref{L-4.1} and Remark~\ref{Rem-4.1} spell out for \eqref{HJB}.

\section{Risk-sensitive control with periodic coefficients}
In this section we consider risk-sensitive control with periodic coefficients.
Consider a  controlled diffusion $X(\cdot)$ taking values in
$\mathbb{R}^d$ satisfying
\begin{equation}\label{sdeperiodic}
dX(t) \;=\; b(X(t), v(t))\,dt + \sigma(X(t))\,dW(t)
\end{equation}
for $t \ge 0$, with $X(0) = x$.

We assume that
\begin{enumerate}
\item
The functions $b(x,v)$,  $\sigma(x)$ and the running cost
$r(x,v)$ are periodic in $x_i$, $i =1,2,\dotsc,d$.
Without loss of generality we assume that the period equals $1$.
\item
$b : \mathbb{R}^d\times \cV \to \mathbb{R}^d$
is continuous and Lipschitz in its first argument
uniformly with respect to the second,
\item
$\sigma: \RR^{d} \to \mathbb{R}^{d\times d}$
is continuously differentiable, its derivatives are H\"older continuous
with exponent $\beta_{0}>0$, and is non-degenerate,
\item
$r : \RR^{d}\times\cV \to \mathbb{R}$ is continuous and Lipschitz in its
first argument uniformly with respect to the second.
We let
$r_{\rm{max}}\;\df\;\max_{(x,v)\in\Bar{Q}\times\cV}|r(x,v)|$.
\end{enumerate}
Admissible controls are defined as in (e).

We consider here as well the infinite horizon risk-sensitive problem which
aims to minimize the cost
in \eqref{risk} under the controlled process governed by \eqref{sdeperiodic}.
Recall the notation defined in Section~\ref{S2} and note that
$C^{0}(\RR^{d})$ is the space of all continuous and bounded
real-valued functions on $\RR^{d}$.
We define the semigroups of operators $\{S_{t}\,,\;t\ge0\}$
and $\{T^{u}_{t}\,,\;t\ge0\}$ acting
on $C^{0}(\RR^{d})$ as in \eqref{semi}--\eqref{E-Tu} relative to
the controlled process governed by \eqref{sdeperiodic}.
Also the operators $\LA_{v} : C^{2}(\RR^{d})\to C^{0}(\RR^{d})$ are as defined in
\eqref{E-Lu}.

Let $C_{p}(\mathbb{R}^d)$ denote the set of all $C^{0}(\mathbb{R}^d)$ functions
with period $1$ and in general if $\mathcal{X}$ is a subset of
$C^{0}(\mathbb{R}^d)$ we let
$\mathcal{X}_{p}(\mathbb{R}^d)\df  \mathcal{X}\cap C_{p}(\mathbb{R}^d)$.

We start with the following theorem which is analogous to Theorem~\ref{T-3.1}.
\begin{theorem}\label{T-5.1}
$\{S_{t}\,,\; t \ge 0\}$ acting on $C^{0}(\RR^{d})$
satisfies the following properties:
\begin{enumerate}
\item
Boundedness: $\|S_{t}f\|_{0;\RR^{d}} \le e^{r_{\rm{max}}t}\|f\|_{0;\RR^{d}}$.
Furthermore, $S_{t}\bm1 \ge  e^{r_{\rm{min}}t}\bm1$,
where $\bm1$ is the constant function $\equiv 1$.
\smallskip
\item
Semigroup property: $S_{0} = I$, $S_{t}\circ S_{s} = S_{t + s}$ for $s, t \ge 0$.
\smallskip
\item
Monotonicity: $f \ge$ (resp., $>$) $g \;\Longrightarrow\; S_{t}f \ge$
(resp., $>$) $S_{t}g$.
\smallskip
\item
Lipschitz property: $\|S_{t}f - S_{t} g\|_{0;\RR^{d}} \le
e^{r_{\rm{max}}t}\|f - g\|_{0;\RR^{d}}$.
\smallskip
\item
Strong continuity: $\|S_{t}f - S_{s}f\|_{0;\RR^{d}} \to 0$ as $t \to s$.
\smallskip
\item
Envelope property: $T^u_{t}f \ge S_{t}f$ for all $u \in U$ and
$S_{t}f \ge S_{t}'f$ for any
other $\{S_{t}'\}$ satisfying  this along with the foregoing properties.
\smallskip
\item
Generator: the infinitesimal generator of
$\{S_{t}\}$ is given by \eqref{E-gen}.
\smallskip
\item
For  $f \in  C_{p}(\mathbb{R}^d)$, $S_{t} f \in C_{p}(\mathbb{R}^d), t \ge 0$.
\end{enumerate}
\end{theorem}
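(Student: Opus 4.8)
The plan is to obtain properties (1)--(7) exactly as in Theorem~\ref{T-3.1}, following the Nisio construction \cite[Theorem~1, pp.~298--299]{Nisi}, and to treat the new property (8) by a translation-invariance argument that exploits the periodicity of the coefficients. First I would observe that periodicity makes $b$, $\sigma$ and $r$ bounded and, uniformly in $v\in\cV$, globally Lipschitz on all of $\RR^{d}$, so that every estimate used in Theorem~\ref{T-3.1} remains valid on $C^{0}(\RR^{d})$ with $\Bar{Q}$ replaced by $\RR^{d}$ and the boundary reflection term deleted. Concretely: boundedness follows from $\abs{r}\le r_{\rm{max}}$ applied inside the exponential in \eqref{semi}; the semigroup property is the dynamic programming principle; monotonicity and the Lipschitz estimate inherit the monotone, resp.\ the $e^{r_{\rm{max}}t}$--Lipschitz, dependence of the integrand in \eqref{semi} on $f$; strong continuity uses continuity of sample paths together with boundedness of $r$; the envelope property is immediate since $S_{t}$ is a pointwise infimum over admissible controls while each $T^{u}_{t}$ corresponds to the constant control $u$; and the form \eqref{E-gen} of the generator follows from It\^o's formula and the usual verification argument.

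For (8), fix $f\in C_{p}(\RR^{d})$, $t\ge0$ and a lattice vector $k\in\mathbb{Z}^{d}$. Given an admissible control $v(\cdot)$ with driving Wiener process $W(\cdot)$, let $X^{x}(\cdot)$ and $X^{x+k}(\cdot)$ be the solutions of \eqref{sdeperiodic} driven by the \emph{same} pair $(W,v)$ from $x$ and $x+k$; these exist and are pathwise unique because, for fixed $v(\cdot)$, the coefficients $y\mapsto b(y,v(t))$ and $y\mapsto\sigma(y)$ are globally Lipschitz. I would then check — and this is the one place $1$-periodicity of $b$ and $\sigma$ enters — that $Y(\cdot)\df X^{x+k}(\cdot)-k$ solves
\begin{equation*}
dY(t)\;=\;b\bigl(Y(t)+k,v(t)\bigr)\,dt+\sigma\bigl(Y(t)+k\bigr)\,dW(t)\;=\;b\bigl(Y(t),v(t)\bigr)\,dt+\sigma\bigl(Y(t)\bigr)\,dW(t)
\end{equation*}
with $Y(0)=x$, so that by pathwise uniqueness $X^{x+k}(\cdot)\equiv X^{x}(\cdot)+k$ almost surely. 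Using periodicity of $r$ in its first argument and of $f$, this gives
\begin{equation*}
E_{x+k}\left[e^{\int_{0}^{t}r(X(s),v(s))\,ds}f(X(t))\right]\;=\;E_{x}\left[e^{\int_{0}^{t}r(X^{x}(s),v(s))\,ds}f(X^{x}(t))\right]
\end{equation*}
for every admissible $v(\cdot)$, with the same value on both sides; taking the infimum over admissible controls yields $S_{t}f(x+k)=S_{t}f(x)$.

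Since $k\in\mathbb{Z}^{d}$ is arbitrary, $S_{t}f$ has period $1$ in each coordinate, and since $S_{t}f\in C^{0}(\RR^{d})$ (from the part of the statement already proved, or directly from the Lipschitz property (4)), we conclude $S_{t}f\in C_{p}(\RR^{d})$. I expect the only mildly delicate point in the whole proof to be the bookkeeping in (8): one must verify that translating the initial state by a period is \emph{exactly} absorbed by translating the entire trajectory, which rests on the pathwise uniqueness for \eqref{sdeperiodic} under assumptions (2)--(3), and on the fact that the class of admissible controls is insensitive to the initial state (true since condition (e) constrains only the pair $(v,W)$). Everything else is a transcription of the proof of Theorem~\ref{T-3.1}.
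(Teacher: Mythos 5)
Your proof is correct and follows essentially the same approach as the paper, which simply states that properties (1)--(7) follow by standard arguments (as in Theorem~\ref{T-3.1} and Remark~\ref{R-5.1}) and that property (8) ``follows from \eqref{semi} and the periodicity of the data.'' Your translation-invariance argument for (8), using pathwise uniqueness to show $X^{x+k}(\cdot)\equiv X^{x}(\cdot)+k$ for $k\in\mathbb{Z}^d$, is exactly the content the paper's one-line justification is invoking, only spelled out in full.
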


\begin{proof}
Properties (1)--(4) and (6) follow by standard arguments
from \eqref{semi} and the bound on $r$.
That $S_{t}: C^{0}(\RR^{d})\to C^{0}(\RR^{d})$ is well known.
See Remark~\ref{R-5.1} below.
Property (8) follows from \eqref{semi} and the periodicity of the
data.
\end{proof}

\begin{theorem}\label{T-5.2}
For $f \in C^{2+\delta}_{p}(\mathbb{R}^d)$, $\delta\in(0,\beta_{0})$, the p.d.e.
\begin{equation}\label{pdeperiodic}
\frac{\partial}{\partial t}u(t,x) \;=\;
\inf_{v\in\cV}\;\bigl(\LA_{v} u(t,x) + r(x,v)u(t,x)\bigr)
\quad\text{in~} \RR_{+}\times\RR^{d}\,,
\end{equation}
with $u(0, x) = f(x)$ $\forall x \in \mathbb{R}^d$ has a unique solution in
$C^{1+\nicefrac{\delta}{2},2+\delta}_{p}\bigl([0,T] \times \mathbb{R}^d\bigr)$,
$T >0$.
The solution $\psi$ has the stochastic representation
\begin{equation}\label{E-str-per}
u(t,x) \;=\; \inf_{v(\cdot)}\;E_{x} \left[e^{\int^t_{0} r(X(s), v(s))\,ds}
 f(X(t))\right] \qquad \forall (t,x)\in[0,\infty)\times\RR^{d} \,.
\end{equation}
Moreover, for some $K_{T} > 0$ depending on $T$, $\delta$,
$\norm{f}_{2+\delta;\RR^{d}}$ and the bounds on the data, we have
\begin{equation*}
\norm{u}_{1+\nicefrac{\delta}{2},2+\delta;[0,T]\times B_{R}}\le K_{T}\,.
\end{equation*}
\end{theorem}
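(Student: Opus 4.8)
The plan is to reduce \eqref{pdeperiodic} to a quasi-linear parabolic equation on the compact boundaryless manifold $\mathbb{T}^{d}\df\RR^{d}/\mathbb{Z}^{d}$ and then to argue exactly as for Theorem~\ref{T-3.2} and Lemma~\ref{L-3.1}. Since $b$, $\sigma$, $r$ are $\mathbb{Z}^{d}$-periodic in $x$ and $f\in C^{2+\delta}_{p}(\RR^{d})$, the operators $\LA_{v}$ and the Hamiltonian $\cH$ of \eqref{E-gen} descend to $\mathbb{T}^{d}$, where \eqref{pdeperiodic} is uniformly parabolic with coefficients H\"older in $x$ and Hamiltonian Lipschitz in $(x,u,\nabla u)$. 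First I would record the a priori sup-norm bound: the constant $e^{r_{\rm max}t}\norm{f}_{0;\RR^{d}}$ is a supersolution of \eqref{pdeperiodic} and its negative a subsolution, so by comparison $\norm{u(t,\cdot\,)}_{0;\RR^{d}}\le e^{r_{\rm max}t}\norm{f}_{0;\RR^{d}}$ for any solution. With this bound, the solvability and parabolic H\"older ($C^{1+\nicefrac{\delta}{2},2+\delta}$) theory of \cite[Theorems~7.2 and~7.4]{Lady} together with the interior estimates of \cite[Theorem~10.1]{Lady}, used as in the proofs of Theorem~\ref{T-3.2} and Lemma~\ref{L-3.1}, apply on $[0,T]\times\mathbb{T}^{d}$ --- there being no boundary condition, the argument is in fact simpler than for Theorem~\ref{T-3.2} --- and produce a unique $u\in C^{1+\nicefrac{\delta}{2},2+\delta}([0,T]\times\mathbb{T}^{d})$ with $\norm{u}_{1+\nicefrac{\delta}{2},2+\delta;[0,T]\times\mathbb{T}^{d}}\le K_{T}$, the constant depending only on $T$, $\delta$, $\norm{f}_{2+\delta;\RR^{d}}$ and the bounds on the data. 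Lifting back to $\RR^{d}$ yields the desired $\mathbb{Z}^{d}$-periodic solution, and since the norm $\norm{\,\cdot\,}_{1+\nicefrac{\delta}{2},2+\delta;[0,T]\times B_{R}}$ of a periodic function is bounded, uniformly in $R$, by a fixed multiple of its norm over a single fundamental cell, the asserted estimate over $B_{R}$ follows at once. (Equivalently one may solve \eqref{pdeperiodic} on $\RR^{d}$ in $C^{1+\nicefrac{\delta}{2},2+\delta}_{\mathrm{loc}}$ and observe that, by periodicity of $b,\sigma,r,f$, each shift $u(t,x+k)$, $k\in\mathbb{Z}^{d}$, solves the same Cauchy problem; the comparison principle for bounded solutions of the linear parabolic equation satisfied by the difference of two solutions --- whose first- and zeroth-order coefficients, obtained by writing the difference of the Hamiltonians in integral form, are bounded --- then forces $u(t,x+k)=u(t,x)$.)

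It remains to verify the stochastic representation \eqref{E-str-per}, which simultaneously identifies $u$ with the Nisio action $S_{t}f$. Fix $(t,x)$ and an admissible control $v(\cdot)$ for \eqref{sdeperiodic}, and apply the It\^o--Dynkin formula to $M_{s}\df e^{\int_{0}^{s}r(X(z),v(z))\,dz}\,u(t-s,X(s))$, $s\in[0,t]$. Because $u$ together with its first and second spatial derivatives is bounded and $r$ is bounded, no localization is needed and the stochastic-integral term is a true martingale; since by \eqref{pdeperiodic} one has $\partial_{t}u\le\LA_{v}u+r(x,v)u$ for every $v\in\cV$, the drift of $M$ is nonnegative, so $M$ is a submartingale and $u(t,x)=M_{0}\le E_{x}[M_{t}]=E_{x}\bigl[e^{\int_{0}^{t}r(X(s),v(s))\,ds}f(X(t))\bigr]$. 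Taking the infimum over admissible controls gives $u\le S_{t}f$. For the reverse inequality, a standard measurable-selection argument produces a Borel-measurable minimizing selector $v^{*}$ for $\cH(x,\cdot,\cdot)$; along the (weak solution under the) Markov control $v^{*}(s)=v^{*}\bigl(X(s),u(t-s,X(s)),\nabla u(t-s,X(s))\bigr)$ the drift of $M$ vanishes, $M$ is a martingale, and equality holds, whence $u(t,x)=S_{t}f(x)$. Alternatively one bypasses the selector by using $\varepsilon$-optimal controls and letting $\varepsilon\downarrow0$.

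The main obstacle is the first step: making precise the reduction to $\mathbb{T}^{d}$ (equivalently, the periodicity-via-uniqueness argument) and checking that the existence/regularity machinery of \cite{Lady}, stated for bounded domains with boundary conditions, carries over verbatim to the periodic, boundaryless setting. Once this is granted, the sup-norm bound, the parabolic Schauder estimates, and the It\^o-formula verification are routine, given the uniform nondegeneracy and H\"older regularity of $a$, the Lipschitz structure of $\cH$, and the boundedness of all the data; the only other nontrivial ingredient --- a measurable selection of an optimal control --- is classical.
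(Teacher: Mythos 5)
Your argument is correct in substance but follows a genuinely different route from the paper's. The paper never passes to the torus; instead it solves a Dirichlet problem on the exhausting balls $B_R$ with truncated initial data $f(x)g(R^{-1}x)$, obtains existence and the stochastic representation for each $u^R$ by citing \cite[Theorem~6.1]{Lady} directly, extracts a uniform $C^{1+\nicefrac{\delta}{2},2+\delta}$ bound from \cite[Theorem~5.2]{Lady}, lets $R\to\infty$, and only then deduces periodicity of the limit $u$ from the stochastic representation \eqref{E-str-per} and the periodicity of $b,\sigma,r,f$. Your approach builds periodicity in from the start by reducing to $\mathbb{T}^{d}$ (or, in the parenthetical variant, by a uniqueness/translation argument on $\RR^{d}$), and then verifies \eqref{E-str-per} a posteriori via the It\^o--Dynkin verification argument with a measurable selector. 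Each route buys something: the paper's exhaustion scheme applies the Lady\v{z}enskaja--Solonnikov--Ural\cprime{}ceva theorems exactly as stated, for bounded domains with classical boundary conditions, at the price of having to recover periodicity at the end; your scheme gets periodicity for free and avoids the exhaustion, but --- as you yourself flag --- the Lady references do not literally cover the compact boundaryless (or periodic boundary condition) setting, so that step requires an additional, standard but nontrivial, adaptation of the Schauder--Cauchy theory. Your It\^o verification of \eqref{E-str-per} is more explicit than anything in the paper (which simply asserts the representation for $u^R$ and passes to the limit), and is correct: boundedness of $u$, $\nabla u$, $\nabla^{2}u$, $r$ ensures the local martingale is a true martingale, the submartingale inequality gives $u\le S_{t}f$, and a measurable selector or $\varepsilon$-optimal controls give the reverse. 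One small caution: the ``integral form'' linearization of the difference of Hamiltonians needs Rademacher's theorem since $\cH$, being an infimum of affine maps, is Lipschitz but not $C^{1}$; this is routine but worth saying.
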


\begin{proof}
Without loss of generality we assume that $f$ is nonnegative.
Consider the p.d.e.
\begin{equation*}
\frac{\partial}{\partial t} u^R(t,x) \;=\; \inf_{v}\;\bigl(\LA_{v} u^R(t,x)
+ r(x, v) u^R(t,x)\bigr)
\quad\text{in~} \RR_{+}\times B_{R}\,,
\end{equation*}
with $u^R \,=\,0$ on $\RR_{+}\times\partial B_R$ and
with $u^R(0, x) =  f(x)g(R^{-1}x)$ for all $x \in B_R$,
where $g$ is a smooth non-negative, radially nondecreasing function which equals
$1$ on $\Bar{B}_{\frac{1}{2}}$ and $0$ on $B^c_{\frac{3}{4}}$.
From \cite[Theorem~6.1, pp. 452--453]{Lady},
the p.d.e.\ \eqref{pdeperiodic} has a unique solution $u^R$ in
$C^{1+\nicefrac{\delta}{2},2+\delta}\bigl([0,T] \times \Bar B_{R}\bigr)$, $T >0$.
This solution has the stochastic representation
\begin{equation*}
u^{R}(t,x) \;=\; \inf_{v(\cdot)}\;E_{x} \left[e^{\int^{t\wedge\tau_{R}}_{0}
r(X(s), v(s))\,ds}
f(X(t\wedge\tau_{R}))g(R^{-1}X(t\wedge\tau_{R}))\right]
\end{equation*}
for all $(t,x)\in[0,\infty)\times\RR^{d}$,
where $\tau_{R}$ denotes the first exit time from the ball $B_{R}$.
Clearly then $R\mapsto u^{R}$ is nondecreasing.
By \cite[Theorem~5.2, p.~320]{Lady} for each $T>0$
there exists a constant $K_{T}$ such that
\begin{equation*}
\norm{u^{R}}_{1+\nicefrac{\delta}{2},2+\delta;[0,T]\times B_{R}}\le K_{T}\,.
\end{equation*}
Therefore $u^{R}$ converges to a function
$u\in C^{1+\nicefrac{\delta}{2},2+\delta}\bigl([0,T] \times \Bar \RR^{d}\bigr)$,
as $R\to\infty$, which satisfies \eqref{pdeperiodic}--\eqref{E-str-per}.
The periodicity of $u(t,x)$ in $x$ follows by \eqref{E-str-per}
and the periodicity of the coefficients.
\end{proof}

\begin{remark}\label{R-5.1}
The regularity of the initial condition $f$ is only needed to obtain
continuous second derivatives at $t=0$.
It is well known that for each $f \in C^{0}(\mathbb{R}^d)$
\eqref{pdeperiodic} has a solution in
$C\bigl([0,T] \times \mathbb{R}^d\bigr)\cap
C_{\rm{loc}}^{1+\nicefrac{\delta}{2},2+\delta}\bigl((0,T)
\times \mathbb{R}^d\bigr)$, for $T>0$.
\end{remark}

\begin{theorem}\label{T-5.3}
There exists a unique $\rho \in \mathbb{R}$ and a $\varphi > 0$ in
$C^{2}_{p}(\mathbb{R}^d)$  unique up to a scalar multiple such that
\begin{equation*}
S_{t}\varphi\;=\;e^{\rho t}\varphi\,, \quad t > 0\,.
\end{equation*}
\end{theorem}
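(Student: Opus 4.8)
The plan is to transcribe the proof of Lemma~\ref{L-4.1} to the periodic setting, with Theorem~\ref{T-5.2} and Remark~\ref{R-5.1} playing the roles that Theorem~\ref{T-3.2} and Lemma~\ref{L-3.1} played there; the key point is that, since all the coefficients are periodic, the problem is effectively one on the \emph{compact} torus $\mathbb{T}^{d}=\RR^{d}/\mathbb{Z}^{d}$, which is what lets the abstract results of Section~4 apply. I would take $\cX=C_{p}(\RR^{d})$, identified with $C(\mathbb{T}^{d})$, with order cone $P=C_{p,+}(\RR^{d})$; since $P$ contains the positive constants, $\interior(P)=\{f\in\cX:\inf_{\RR^{d}}f>0\}\ne\varnothing$. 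Each $S_{t}$ is positively $1$-homogeneous by \eqref{semi}, superadditive because it is the pointwise infimum of the linear operators $T^{v}_{t}$, maps $\cX$ into $\cX$ by Theorem~\ref{T-5.1}\,(8), and is strongly continuous in the $C^{0}$-norm by Theorem~\ref{T-5.1}\,(5) --- the only form of continuity actually used in the proof of Corollary~\ref{C-4.2}.

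Next I would check the two substantive hypotheses of Corollary~\ref{C-4.2} for the operators $S_{t}$, $t>0$. For complete continuity: given $f\in\cX$ with $\norm{f}_{0;\RR^{d}}\le1$, the solution $u^{f}$ of \eqref{pdeperiodic} with $u^{f}(0,\cdot\,)=f$ satisfies $\abs{u^{f}}\le e^{r_{\rm{max}}T}$ on $[0,T]\times\RR^{d}$ and $S_{t}f=u^{f}(t,\cdot\,)$ (the latter extending \eqref{E-str-per} to continuous initial data by density), so the interior parabolic estimates (cf.\ Remark~\ref{R-5.1}) together with periodicity bound $\norm{S_{t}f}_{2+\delta;\RR^{d}}$ by a constant depending only on $t$ and the data, uniformly in such $f$; since the torus is compact, $C^{2+\delta}_{p}(\RR^{d})\hookrightarrow C_{p}(\RR^{d})$ is compact, and hence $S_{t}:\cX\to\cX$ is completely continuous for $t>0$. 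For strong positivity: if $f\in\cX$ with $f\ge0$, $f\not\equiv0$, then $S_{t}f\ge0$ by \eqref{E-str-per} and $S_{t}f(x)\ge e^{-r_{\rm{max}}t}\inf_{v(\cdot)}E_{x}[f(X(t))]$; using periodicity of $f$ and the Gaussian lower bound on the transition density of the controlled diffusion, uniform over admissible controls by uniform nondegeneracy and the bound on $b$, this last quantity is bounded below by a multiple of $\int_{[0,1]^{d}}f>0$, uniformly in $x$, so $\inf_{\RR^{d}}S_{t}f>0$, i.e.\ $S_{t}f\in\interior(P)$. (One could instead argue through the strong maximum principle exactly as in Lemma~\ref{L-4.1}, with no Hopf boundary lemma needed in the absence of a boundary.)

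Corollary~\ref{C-4.2} then yields a unique $\rho\in\RR$ and a unique $\varphi\in\interior(P)$ with $\norm{\varphi}_{0;\RR^{d}}=1$ such that $S_{t}\varphi=e^{\rho t}\varphi$ for all $t\ge0$, and the stated regularity comes for free: $\varphi=e^{-\rho}S_{1}\varphi$ and $S_{1}$ maps $\cX$ into $C^{2+\delta}_{p}(\RR^{d})$ by the bound above, so $\varphi\in C^{2+\delta}_{p}(\RR^{d})\subset C^{2}_{p}(\RR^{d})$ with $\varphi>0$. For uniqueness up to a scalar multiple inside $C^{2}_{p}(\RR^{d})$, if $\Tilde{\varphi}\in C^{2}_{p}(\RR^{d})$ with $\Tilde{\varphi}>0$ satisfies $S_{t}\Tilde{\varphi}=e^{\Tilde{\rho}t}\Tilde{\varphi}$ for all $t\ge0$, then $\Tilde{\varphi}\in\interior(P)$, and normalizing it and invoking the uniqueness assertion of Corollary~\ref{C-4.2} forces $\Tilde{\rho}=\rho$ and $\Tilde{\varphi}=c\,\varphi$ for some $c>0$. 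I expect the only genuine obstacle --- the one place the argument differs from Lemma~\ref{L-4.1} --- to be handling the noncompactness of $\RR^{d}$: periodicity must be exploited both to make the a priori local parabolic estimates uniform (so that $S_{t}$ stays compact) and to turn ``$f\not\equiv0$'' into a uniform positive lower bound on period-cell integrals (so that strong positivity survives); modulo that, the proof is a verbatim transcription of the proof of Lemma~\ref{L-4.1}.
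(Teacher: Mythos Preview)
Your proof is correct and follows the same overall strategy as the paper --- verify the hypotheses of Corollary~\ref{C-4.2} for the periodic Nisio semigroup and invoke it --- but with one genuine difference: you take the ambient Banach space to be $\cX=C_{p}(\RR^{d})\cong C(\mathbb{T}^{d})$ with the sup norm, whereas the paper takes $\cX=C^{2}_{p}(\RR^{d})$ and shows $S_{t}:C^{2}_{p}\to C^{2}_{p}$ is compact by repeating the bootstrap of Lemma~\ref{L-3.1} (improving to $C^{2+\beta}$ via interior estimates and using $C^{2+\beta}_{p}\hookrightarrow C^{2}_{p}$ compact). Your choice buys a shorter compactness argument (one pass of interior estimates and $C^{2+\delta}_{p}\hookrightarrow C_{p}$), and the strong continuity needed in Corollary~\ref{C-4.2} is exactly Theorem~\ref{T-5.1}\,(5), stated for the sup norm; the cost is the extra regularity step $\varphi=e^{-\rho}S_{1}\varphi\in C^{2+\delta}_{p}$ at the end, which you handle correctly. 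The paper's choice makes $\varphi\in C^{2}_{p}$ automatic but leans on the Lemma~\ref{L-3.1} mechanism and implicitly on strong continuity in the $C^{2}$ norm. Both routes are sound; on the torus, where there is no boundary regularity to negotiate, yours is marginally more direct.
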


\begin{proof}
Using Theorem~\ref{T-5.2}, one can show as in the proof of
Lemma~\ref{L-3.1} that
$S_{t} : C^{2}_{p}(\mathbb{R}^d) \to C^{2}_{p}(\mathbb{R}^d)$ is compact for
each $t \ge 0$.
Now with  $\cX= C^{2}_{p}(\mathbb{R}^d)$ and
$P = \{f \in C^{2}_{p}(\mathbb{R}^d) : f \ge 0\}$
and $T = S_{t}$ for some $t \ge 0$, the conditions of Theorems~\ref{T-4.1} and
\ref{T-4.2} are easily verified using Theorem~\ref{T-5.1}.
Repeating the same argument as in the proof of Corollary~\ref{C-4.2},
completes the proof.
\end{proof}

\begin{lemma}
The pair $(\rho, \varphi)$ given in Theorem~\ref{T-5.3} is a solution to
the p.d.e.
\begin{equation}\label{HJBtorus}
\rho\,\varphi(x) \;=\;\inf_{v}\bigl(\LA_{v}\varphi(x) + r(x, v)\varphi(x)\bigr),
\end{equation}
where \eqref{HJBtorus} specifies $\rho$ uniquely in $\mathbb{R}$ and
$\varphi$ uniquely in $C^{2}_{p}(\mathbb{R}^d)$ up to a scalar multiple.
Moreover, $\inf_{\mathbb{R}^d} \varphi > 0$.
\end{lemma}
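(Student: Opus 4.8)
\emph{Proof sketch.}
The plan is to follow the proof of Lemma~\ref{L-4.1} for the reflected-diffusion case, using the simplifications afforded here by the absence of a boundary and by the compactness of the torus. Three things have to be shown: that the pair $(\rho,\varphi)$ from Theorem~\ref{T-5.3} solves the elliptic Hamilton--Jacobi--Bellman equation \eqref{HJBtorus}; that \eqref{HJBtorus} pins down $\rho\in\RR$ and $\varphi\in C^{2}_{p}(\RR^{d})$ up to a positive scalar; and that $\inf_{\RR^{d}}\varphi>0$.

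For the first point I would begin by upgrading the regularity of $\varphi$: from $\varphi=e^{-\rho t}S_{t}\varphi$ together with Remark~\ref{R-5.1} (equivalently, the interior parabolic estimates used in the proof of Lemma~\ref{L-3.1}) and property~(8) of Theorem~\ref{T-5.1}, one gets $\varphi\in C^{2+\delta}_{p}(\RR^{d})$ for every $\delta\in(0,\beta_{0})$, so in particular $\varphi$ lies in the domain of the generator. Property~(7) of Theorem~\ref{T-5.1} identifies the generator $\cG$ with the operator in \eqref{E-gen}, so differentiating $S_{t}\varphi=e^{\rho t}\varphi$ at $t=0$ yields $\rho\,\varphi=\cG\varphi$, which is exactly \eqref{HJBtorus}. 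Alternatively, one uses \eqref{E-str-per} to recognize $(t,x)\mapsto e^{\rho t}\varphi(x)$ as the solution of \eqref{pdeperiodic} with initial datum $\varphi$ and then compares the $t$-derivatives.

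For the uniqueness assertion, suppose $\Hat\rho\in\RR$ and $\Hat\varphi\in C^{2}_{p}(\RR^{d})$ with $\Hat\varphi>0$ solve \eqref{HJBtorus}. Then $w(t,x)\df e^{\Hat\rho t}\Hat\varphi(x)$ is a bounded classical solution of \eqref{pdeperiodic} with $w(0,\,\cdot\,)=\Hat\varphi$, and I would run the usual verification argument: applying It\^o's formula to $s\mapsto e^{\int_{0}^{s}r(X_{z},v_{z})\,dz}\,w(t-s,X_{s})$ under an arbitrary admissible control and using $\partial_{t}w\le\LA_{v}w+r(\,\cdot\,,v)w$ for every $v\in\cV$ shows that this process is a submartingale, whence $w(t,x)\le S_{t}\Hat\varphi(x)$; taking a measurable selector $v^{*}$ that realizes the infimum in \eqref{HJBtorus} turns it into a martingale and gives the reverse inequality. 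Hence $S_{t}\Hat\varphi=e^{\Hat\rho t}\Hat\varphi$ for all $t\ge0$, and the uniqueness assertion of Theorem~\ref{T-5.3} forces $\Hat\rho=\rho$ and $\Hat\varphi=C\varphi$ for some $C>0$. I expect this verification step — turning an elliptic HJB solution into a semigroup eigenfunction, which requires the measurable selection together with It\^o's formula for the periodic (non-reflected) diffusion — to be the only point needing genuine care.

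Finally, the uniform bound is immediate once $\varphi>0$ pointwise. Positivity is already part of Theorem~\ref{T-5.3}, and it can also be obtained exactly as in the proof of Lemma~\ref{L-4.1} (now with no boundary term), either from $\varphi=e^{-\rho t}S_{t}\varphi$ and the strict positivity of the transition density of the non-degenerate diffusion, or from the strong maximum principle applied to the linear inequality $\LA_{v^{*}}\varphi-\norm{r(\,\cdot\,,v^{*})-\rho}_{0;\RR^{d}}\,\varphi\le0$ satisfied by the nonnegative, nontrivial function $\varphi$. Since $\varphi$ is continuous and $1$-periodic, it attains its infimum over the compact cube $[0,1]^{d}$, and that value is positive; hence $\inf_{\RR^{d}}\varphi>0$.
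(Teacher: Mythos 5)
Your proof is correct, and its overall skeleton matches the paper's (whose own proof is just the one‐liner ``directly analogous to Lemma~4.1''): establish regularity of $\varphi$, differentiate $S_t\varphi=e^{\rho t}\varphi$ at $t=0$ using the generator from Theorem~\ref{T-5.1}(7), show any positive $C^2_p$ solution $(\Hat\rho,\Hat\varphi)$ of \eqref{HJBtorus} is a semigroup eigenpair and invoke the uniqueness in Theorem~\ref{T-5.3}, and finally get $\inf\varphi>0$ from strict positivity plus periodicity/compactness. The one place where you genuinely deviate is the mechanism for turning an elliptic HJB solution into a semigroup eigenfunction. The paper's argument (inherited from the proof of Lemma~\ref{L-4.1}) is purely analytic: plug $e^{\Hat\rho t}\Hat\varphi$ into \eqref{pdeperiodic}, check that it is a classical solution, and then identify it with $S_t\Hat\varphi$ by the uniqueness clause of Theorem~\ref{T-5.2} together with the stochastic representation \eqref{E-str-per}. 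You instead run a probabilistic verification argument with It\^o's formula, a submartingale inequality for arbitrary admissible controls, and a measurable selector achieving the infimum. Both are sound; the paper's route is shorter because it piggybacks on the parabolic existence/uniqueness theorem already proved, while yours is more self-contained probabilistically but requires justifying the measurable selection and the (sub)martingale property. One small caveat worth noting in either variant: to apply Theorem~\ref{T-5.2} (or to make $\varphi$ lie in the generator's domain) one needs $\Hat\varphi\in C^{2+\delta}_p$, not merely $C^2_p$; this follows from elliptic bootstrapping on \eqref{HJBtorus}, and you correctly flag the analogous regularity upgrade for $\varphi$ at the start. The final step on $\inf_{\RR^d}\varphi>0$ is handled exactly as the paper intends — strict positivity from the maximum principle or the non-degenerate transition density, then the minimum over the compact fundamental cube.
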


\begin{proof}
The proof is directly analogous to that of Lemma~\ref{L-4.1}.
\end{proof}

\begin{lemma}
$(C^{2}_{p}(\mathbb{R}^d))^* \simeq {\mathcal M}(Q)$, with $Q = [0, 1)^d$.
\end{lemma}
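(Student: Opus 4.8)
The plan is to reduce to the already-established Lemma~\ref{L-4.2} by identifying $1$-periodic functions on $\RR^{d}$ with functions on the flat torus $\mathbb{T}^{d}\df\RR^{d}/\mathbb{Z}^{d}$, and $\mathbb{T}^{d}$ with the fundamental domain $Q=[0,1)^{d}$; as in Lemma~\ref{L-4.2} the substantive content is the identification of the positive dual cone $\bigl(C^{2}_{p}(\RR^{d})\bigr)^{*}_{+}$ with $\cM(Q)$. First I would record the elementary dictionary: a function in $C_{p}(\RR^{d})$ (resp.\ $C^{2}_{p}(\RR^{d})$) is determined by its restriction to $Q$ and descends to a continuous (resp.\ $C^{2}$) function on $\mathbb{T}^{d}$, and conversely; since the supremum of a $1$-periodic function over $\RR^{d}$ equals its supremum over $Q$, this yields isometric identifications $C_{p}(\RR^{d})\cong C(\mathbb{T}^{d})$ and $C^{2}_{p}(\RR^{d})\cong C^{2}(\mathbb{T}^{d})$. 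Moreover the quotient map $Q\to\mathbb{T}^{d}$ is a bijection that is Borel measurable with Borel measurable inverse, hence a Borel isomorphism, so the finite Borel measures on $Q$ and on $\mathbb{T}^{d}$ are in canonical one-to-one correspondence; I will freely write $\cM(Q)\cong\cM(\mathbb{T}^{d})$.

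Next I would transcribe the argument of Lemma~\ref{L-4.2}. For $\Lambda\in\bigl(C^{2}_{p}(\RR^{d})\bigr)^{*}_{+}$ and $f\in C^{2}_{p}(\RR^{d})$, using $\bm{1}\in C^{2}_{p}(\RR^{d})$, the pointwise bounds $-\norm{f}_{0;\RR^{d}}\,\bm{1}\preceq f\preceq\norm{f}_{0;\RR^{d}}\,\bm{1}$, and positivity of $\Lambda$, one obtains $\abs{\Lambda(f)}\le 2\norm{f}_{0;\RR^{d}}\,\Lambda(\bm{1})$, so $\Lambda$ is continuous for the $\norm{\,\cdot\,}_{0;\RR^{d}}$ norm on the subspace $C^{2}_{p}(\RR^{d})\subset C_{p}(\RR^{d})\cong C(\mathbb{T}^{d})$. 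Since $C^{2}_{p}(\RR^{d})$ is $\norm{\,\cdot\,}_{0;\RR^{d}}$-dense in $C_{p}(\RR^{d})$ and since every nonnegative element of $C_{p}(\RR^{d})$ is a uniform limit of nonnegative elements of $C^{2}_{p}(\RR^{d})$ (both via periodic mollification), $\Lambda$ extends uniquely to a bounded positive linear functional on $C(\mathbb{T}^{d})$. The Riesz representation theorem on the compact Hausdorff space $\mathbb{T}^{d}$ then produces a unique finite nonnegative Borel measure, equivalently a unique $\mu\in\cM(Q)$, with $\Lambda(f)=\int_{Q}f\,d\mu$ for every $f\in C^{2}_{p}(\RR^{d})$ (identifying $f$ with its restriction to $Q$). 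Conversely every $\mu\in\cM(Q)$ defines through this formula an element of $\bigl(C^{2}_{p}(\RR^{d})\bigr)^{*}_{+}$, and $\mu\mapsto\Lambda$ is injective by the density of $C^{2}_{p}(\RR^{d})$ in $C(\mathbb{T}^{d})$; this gives the asserted identification.

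The only step requiring any care is producing, for a given nonnegative $g\in C_{p}(\RR^{d})$, nonnegative approximants $g_{n}\in C^{2}_{p}(\RR^{d})$ with $g_{n}\to g$ uniformly: convolving $g$ with a nonnegative smooth bump of shrinking support yields functions that are $C^{\infty}$, $1$-periodic, and nonnegative, and that converge uniformly to $g$ by the uniform continuity of $g$ on the compact torus. Everything else is a transcription of the proof of Lemma~\ref{L-4.2} together with the standard fact that $[0,1)^{d}$ and $\RR^{d}/\mathbb{Z}^{d}$ are isomorphic as measurable spaces; I do not anticipate any genuine obstacle.
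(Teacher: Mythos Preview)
Your proof is correct and follows the same overall strategy as the paper: derive the $\norm{\,\cdot\,}_{0}$-bound $\abs{\Lambda(f)}\le 2\norm{f}_{0}\,\Lambda(\bm1)$ from positivity (exactly as in Lemma~\ref{L-4.2}), extend $\Lambda$ to continuous functions, and invoke the Riesz representation theorem.

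The implementation differs in two respects. First, the paper works directly with the projection $\pi:\RR^{d}\to[0,1)^{d}$ and the subspace $\mathcal{D}=\{f\circ\pi:f\in C_{p}(\RR^{d})\}\subset C^{0}(Q)$, whereas you pass to the compact torus $\mathbb{T}^{d}$ and then transport measures back to $Q$ via the Borel isomorphism $Q\cong\mathbb{T}^{d}$; your route makes the application of Riesz on a compact Hausdorff space more transparent. Second, the paper extends $\Lambda$ by Hahn--Banach, while you use the $\norm{\,\cdot\,}_{0}$-density of $C^{2}_{p}$ in $C_{p}$ obtained by mollification; density gives a \emph{unique} extension and preserves positivity automatically, which is a small gain in cleanliness over the Hahn--Banach step (where positivity of the extension must be argued separately, or one must appeal again to density). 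Both approaches are short and essentially equivalent.
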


\begin{proof}
Let $\pi$ denote the projection of $\mathbb{R}^d$ to $[0, 1)^d$.
Set
\begin{equation*}
\mathcal{D}\;=\;\{f\circ\pi \in C(Q) : f \in C_{p} (\mathbb{R}^d) \}\,.
\end{equation*}
Then $\mathcal{D}$ is a linear subspace of $C^{0}(Q)$.

For $\Lambda \in (C_{p}(\mathbb{R}^d))^*$, define the linear map
$\tilde{\Lambda}: {\mathcal D}  \to \mathbb{R}$ by
\begin{equation*}
 \tilde{\Lambda}(f\circ\pi) \;=\;\Lambda (f) .
\end{equation*}
Then
\begin{equation*}
| \tilde{\Lambda}(f\circ\pi)| \;\le\;
\|\Lambda\| \|f\|_{0;\RR^{d}} \;\le\;
\|\Lambda\| \|f\circ\pi\|_{0;Q}\,.
\end{equation*}
i.e. $\tilde{\Lambda} \in {\mathcal D}^*$.
Using the Hahn-Banach theorem, there exists a continuous linear extension
$\Lambda' : C^{0}(Q) \to \mathbb{R}$
of $\tilde{\Lambda}$ such that $\|\Lambda'\| = \|\tilde{\Lambda}\|$.

Since $\bigl(C^{0}(Q)\bigr)^* = \mathcal{M}(Q),$ the set of all finite
signed Radon measures, we have   $(C_{p}(\mathbb{R}^d))^* \subseteq \mathcal{M}(Q)$.
The reverse inequality follows easily.
Hence $(C_{p}(\mathbb{R}^d))^* = \mathcal{M}(Q)$. Now the analogous argument
in Lemma~\ref{L-4.2}
can be used to complete the proof.
\end{proof}

Now by closely mimicking the proofs of Lemma~\ref{L-4.3} and Theorem~\ref{T-4.4},
we have

\begin{theorem}
$\rho$ satisfies
\begin{align*}
\rho &\;=\; \inf_{f \in C^{2}_{+}(Q)\cap\mathcal{D}}\;
\sup_{\mu\in\mathcal{M}(Q)\,:\,\int\! f\,d\mu=1}\;\int\cG f\,d\mu\\[3pt]
&\;=\; \sup_{f \in C^{2}_{+}(Q)\cap\mathcal{D}} \;
\inf_{\mu\in\mathcal{M}(Q)\,:\,\int\! f\,d\mu=1}\;\int\cG f\,d\mu,
\end{align*}
where ${\mathcal G}$ given in Theorem~\ref{T-5.1}.
\end{theorem}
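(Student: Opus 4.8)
The plan is to transcribe the arguments of Lemma~\ref{L-4.3} and Theorem~\ref{T-4.4}, replacing the bounded domain $\Bar{Q}$ by the torus $Q=[0,1)^{d}$; the compactness of $Q$ in fact removes several of the technical nuisances of Section~4. First I would record the structural facts needed. Taking $\cX=C^{2}_{p}(\RR^{d})$ and $P=\{f\in C^{2}_{p}(\RR^{d}):f\ge0\}$, the family $\{S_{t}\}$ is a strongly continuous semigroup of superadditive, positively $1$-homogeneous, strongly positive, strictly increasing, completely continuous operators: compactness follows by the argument of Lemma~\ref{L-3.1} applied through Theorem~\ref{T-5.2}; strong positivity follows from the strong maximum principle, noting that $S_{t}f>0$ on the compact torus forces $\inf_{\RR^{d}}S_{t}f>0$, so that $S_{t}\colon\dot{P}\to\interior(P)$ for $t>0$; strictness is property~(3) of Theorem~\ref{T-5.1}. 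Hence Theorems~\ref{T-4.1} and~\ref{T-4.2} and Corollary~\ref{C-4.2} apply and produce $(\rho,\varphi)$, and by the two preceding lemmas $(C^{2}_{p}(\RR^{d}))^{*}\simeq\cM(Q)$, with the positive dual cone corresponding to the nonnegative measures, $\cG\varphi=\rho\varphi$, and $\inf_{\RR^{d}}\varphi>0$; by the smoothing property of $S_{1}$ we also have $\varphi\in C^{2+\delta}_{p}(\RR^{d})$ for $\delta\in(0,\beta_{0})$.

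Next I would prove the periodic analogue of Lemma~\ref{L-4.3}: for every $f\in C^{2+\delta}_{p,+}(\RR^{d})$ with $f\not\equiv0$,
\begin{equation*}
\limsup_{t\downarrow0}\;
\inf_{\substack{\mu\in\cM(Q)\\[1pt]\int\!f\,d\mu=1}}\;
\int\frac{S_{t}f-f}{t}\,d\mu
\;=\;
\inf_{\substack{\mu\in\cM(Q)\\[1pt]\int\!f\,d\mu=1}}\;
\int\cG f\,d\mu\,,
\end{equation*}
together with the symmetric statement obtained by replacing $\limsup\inf$ by $\liminf\sup$. As in Lemma~\ref{L-4.3} the pointwise limit $\tfrac1t(S_{t}f-f)\to\cG f$ holds by property~(7) of Theorem~\ref{T-5.1}, while writing $\tfrac1t(S_{t}f-f)=\tfrac1t\int_{0}^{t}\partial_{s}u^{f}(s,\cdot)\,ds$ with $u^{f}(t,\cdot)=S_{t}f$ and invoking the parabolic estimate of Theorem~\ref{T-5.2} on balls $B_{R}$ (and periodicity) gives a H\"older bound for $x\mapsto\tfrac1t(S_{t}f(x)-f(x))$ that is uniform over $t\in(0,1]$; hence the convergence is uniform on $Q$. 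The optimizing measures $\mu_{t}$ are supported in the compact set $Q$, so their tightness is automatic, and the limiting argument of Lemma~\ref{L-4.3} carries over unchanged.

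Finally I would assemble the formula exactly as in Theorem~\ref{T-4.4}. Since $\cG\varphi=\rho\varphi$, for any $\mu\in\cM(Q)$ with $\int\varphi\,d\mu=1$ we have $\int\cG\varphi\,d\mu=\rho$, whence $\rho\le\sup_{f}\inf_{\mu}\int\cG f\,d\mu$ and, evaluating at $f=\varphi$, also $\inf_{f}\sup_{\mu}\int\cG f\,d\mu\le\rho$, with $f$ ranging over $C^{2+\delta}_{p,+}(\RR^{d})$. For the reverse inequalities, Chang's identity (Theorem~\ref{T-4.2}) applied to $T=S_{t}$ gives
\begin{align*}
e^{\rho t} &\;=\;r_{*}(S_{t})\;=\;\sup_{g\in C^{2+\delta}_{p,+}}\;
\inf_{\substack{\mu\in\cM(Q)\\[1pt]\int\!g\,d\mu=1}}\;\int S_{t}g\,d\mu\\[3pt]
&\;=\;r^{*}(S_{t})\;=\;\inf_{g\in C^{2+\delta}_{p,+}}\;
\sup_{\substack{\mu\in\cM(Q)\\[1pt]\int\!g\,d\mu=1}}\;\int S_{t}g\,d\mu\,,
\end{align*}
the restriction of the outer extremum to $C^{2+\delta}_{p,+}$ being legitimate since it is attained at $\varphi$; subtracting $1$, dividing by $t>0$, and taking $t\downarrow0$ together with the previous step yields, for each fixed $f\in C^{2+\delta}_{p,+}$, that $\rho\ge\inf_{\mu}\int\cG f\,d\mu$ and $\rho\le\sup_{\mu}\int\cG f\,d\mu$, hence $\rho=\sup_{f}\inf_{\mu}\int\cG f\,d\mu=\inf_{f}\sup_{\mu}\int\cG f\,d\mu$ over $C^{2+\delta}_{p,+}$. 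The passage to $f$ ranging over $C^{2}_{+}(Q)\cap\mathcal{D}$, which we identify with $C^{2}_{p,+}(\RR^{d})$, is by density of $C^{2+\delta}_{p}$ in $C^{2}_{p}$ and continuity of $\cG\colon C^{2}_{p}\to C^{0}_{p}$, as in the proof of Theorem~\ref{T-4.4}; the outer extremizers remain attained at $\varphi$.

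I expect the one delicate point, just as in Section~4, to be the regularity input underlying the Lemma~\ref{L-4.3} analogue: one needs $\tfrac1t(S_{t}f-f)\to\cG f$ \emph{uniformly} on $Q$, which rests on a parabolic H\"older estimate that is uniform in $t$ near $0$. In the periodic setting this means combining the local estimate of Theorem~\ref{T-5.2} on the balls $B_{R}$ with the periodicity of the solution to obtain a nondegenerate bound on $Q$; one must likewise check that strong positivity genuinely holds on all of $\RR^{d}$, i.e.\ that $S_{t}f$ stays bounded away from $0$, which again uses compactness of the torus. Everything else is a routine transcription of the bounded domain proofs.
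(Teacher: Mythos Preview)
Your proposal is correct and follows exactly the approach the paper indicates: the paper's own proof is the single sentence ``by closely mimicking the proofs of Lemma~\ref{L-4.3} and Theorem~\ref{T-4.4}'', and you have carried out precisely that transcription with appropriate attention to the periodic/torus modifications. Your observation that the ambient space here is $C^{2}_{p}$ (per Theorem~\ref{T-5.3}) rather than $C^{2+\delta}$, and that the restriction of the Collatz--Weilandt extremum to $C^{2+\delta}_{p,+}$ is justified by attainment at $\varphi$, is a correct handling of the one small discrepancy with Section~4.
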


The stability of the semigroup also follows as in Lemma~\ref{L-4.4}.
It is well known that  \eqref{sdeperiodic} has a transition probability
density $p(t,x,y)$ which is bounded away from zero, uniformly over
all Markov controls $v$, for $t=1$ and $x$, $y$ in a compact set.
It is straightforward to show that this implies property (P1).
Therefore exponential convergence follows by Theorem~\ref{T-general}~(iii).

\def\cprime{$'$}

\end{document}